\newtheorem{theorem}{Theorem}[section]
\newtheorem{remark}[theorem]{Remark}
\newtheorem{assumption}[theorem]{Assumption}
\newtheorem{lemma}[theorem]{Lemma}
\newtheorem{proposition}[theorem]{Proposition}
\def \M{{\mathcal M}}
\def \E{\mathsf{E}}
\def \EE{\widehat{\mathsf{E}}}
\def \P{\mathsf{P}}
\def \PP{\widehat{\mathsf{P}}}
\def \R{\mathbb{R}}
\def \F{\mathbb{F}}
\def\X{\widehat{X}}
\def\WW{\widetilde{W}}
\def\rrho{\widehat{\rho}}
\def\PPsi{\widehat{\Psi}}
\def\ol{\overline}
\def\ul{\underline}
\title[Debt Ratio Control with Regime Switching]{Optimal Control of Debt-to-GDP Ratio \\ in an $N$-state Regime Switching Economy}
	\author[Ferrari, Rodosthenous]{Giorgio Ferrari, Neofytos Rodosthenous}
\keywords{}
\address{G.~Ferrari: Center for Mathematical Economics (IMW), Bielefeld University, Universit\"atsstrasse 25, 33615, Bielefeld, Germany}
\email{\href{mailto:giorgio.ferrari@uni-bielefeld.de}{giorgio.ferrari@uni-bielefeld.de}}
\address{N.~Rodosthenous: School of Mathematical Sciences, Queen Mary University of London, Mile End Road, London E1 4NS, United Kingdom}
\email{\href{mailto:n.rodosthenous@qmul.ac.uk}{n.rodosthenous@qmul.ac.uk}}
\date{\today}
\numberwithin{equation}{section}
\begin{document}

\begin{abstract} 
We solve an infinite time-horizon bounded-variation stochastic control problem with regime switching between $N$ states. This is motivated by the problem of a government that wants to control the country's debt-to-GDP (gross domestic product) ratio. In our formulation, the debt-to-GDP ratio evolves stochastically in continuous time, and its drift -- given by the interest rate on government debt, net of the growth rate of GDP -- is affected by an exogenous macroeconomic risk process modelled by a continuous-time Markov chain with $N$ states. The government can act on the public debt by increasing or decreasing its level, and it aims at minimising a net expected regime-dependent cost functional. Without relying on a guess-and-verify approach, but performing a direct probabilistic study, we show that it is optimal to keep the debt-to-GDP ratio in an interval, whose boundaries depend on the states of the risk process. These boundaries are given through a zero-sum optimal stopping game with regime switching with $N$ states and are characterised through a system of nonlinear algebraic equations with constraints. To the best of our knowledge, such a result appears here for the first time. Finally, we put in practice our methodology in a case study of a Markov chain with $N=2$ states; we provide a thorough analysis and we complement our theoretical results by a detailed numerical study on the sensitivity of the optimal debt ratio management policy with respect to the problem's parameters.
\end{abstract}

\maketitle

{\textbf{Keywords}}: singular stochastic control, zero-sum optimal stopping game, free-boundary problem, regime switching, debt-to-GDP ratio.

\smallskip

{\textbf{MSC2010 subject classification}}: 93E20, 60G40, 60J60, 60J27, 91B64.

\section{Introduction}

It has been observed that during the financial crisis that started in 2007, debt-to-GDP ratio (also called the ``debt ratio") exploded from an average of 53\% to circa 80\% in many countries. Ever since, there has been a huge debate in the economic and political community on the sustainability of public debt. Using different statistical and methodological approaches, many researchers conclude that high government debt has negative economic and financial effects, as it makes the economy less resilient to macroeconomic shocks (e.g.\ sovereign default risks and liquidity shocks), and poses limits to the adoption of counter-cyclical fiscal policies (see \cite{RRR12}, among many others).
The common view derived from the empirical evidence is that, from the perspective of a government's general economic planning, it is important to reduce high levels of debt ratio in order to maintain fiscal sustainability and support stronger fundamentals.
However, in \cite{IMF} researchers from the International Monetary Fund also suggest that reducing the debt ratio might not be always the most sensible approach. The conclusion seems to apply in particular to those countries enjoying sufficient ``fiscal space''\footnote{This is the  distance between the government's debt ratio and an ``upper limit'', calculated by the Moody's ratings agency, beyond which the government should reduce debt in order to avoid default.}, like U.S.A., Germany and the U.K.. 
When deciding their economic planning, governments are presented with two questions: \emph{How much is too much?} and \emph{How low is too low?}.
In this paper, we propose a mathematical formulation of the optimal debt ratio's management problem faced by a government that addresses both of these questions.

In our model, the GDP of the country is a geometric Brownian motion with growth rate $g$ and volatility (per unit of GDP) $\sigma$. The real debt evolves exponentially with rate $r+\lambda_{Y_t}$, which is the interest rate on debt that the government pays at time $t$. This consists of a fixed deterministic component $r$ and a stochastic, time-varying component $\lambda_{Y_t}$. As a matter of fact, this is a generalisation of the standard exponential evolution of real debt with constant rate that one can find in classical textbooks of macroeconomics (see \cite{BlanchardFischer}, among others). The stochastic, time-varying component of the interest rate is driven by a continuous-time Markov chain $Y$ with $N$ states, modelling market factors that are not under the control of the government. In this sense, $\lambda_{Y_t}$ is the additional interest that the government pays on debt at time $t$, e.g.\ due to a change of the credit rating of the country, or to a mass sell-off of government bonds. As a result, in absence of any intervention, the debt-to-GDP ratio evolves stochastically following geometric dynamics with regime switching in the drift $r+\lambda_{Y_t}-g$.

When in debt, the government incurs an instantaneous cost which may be interpreted as an opportunity cost resulting, e.g., from private investments crowding out, less room for financing public investments, and from a tendency to suffer low subsequent growth (see \cite{RRR12}, among others, for empirical studies). 
We allow this cost to depend on the current economic regime $Y$. 
The government may intervene in order to decrease or increase the level of the debt ratio, and we assume that these policies have an instantaneous effect. Consequently, the cumulative amount of debt ratio's increase and decrease are the government's control variables. Any decrease of the debt ratio by the government results in proportional costs, whereas any increase results in proportional benefits. We further assume that the government discounts costs and benefits at a stochastic time-varying rate modulated by the changes in the economic regime. The objective of the government is to minimise the total expected discounted costs incurred by debt and the cost of decreasing the debt ratio, net of the benefits arising from an increase of the latter by the government.

The mathematical formulation associated with the above problem is that of a bounded-variation stochastic control problem, in which the state process is a linearly controlled geometric Brownian motion with regime switching and the cost functional is regime dependent as well. This is due to the $N$-state Markov chain $Y$ modelling the macroeconomic conditions. 
We succeed in determining the explicit solution to this problem. 
To the best of our knowledge, this is the first paper which completely solves a singular stochastic control problem with: (i) regime switching between an arbitrary number $N \geq 2$ of states and (ii) controls of bounded-variation.

We solve this problem \emph{without} relying on a classical \emph{guess-and-verify} approach. Indeed, if we attempt to follow such an approach, we should solve a system of $N$ coupled ordinary differential equations with gradient constraints (the coupling is through the transition rates of the Markov chain $Y$), and then verify that the obtained solution satisfies the dynamic programming equation which takes the form of a variational inequality. Given the complexity of the problem under consideration, this approach seems not to be feasible. In fact, even in the particular example with $N=2$ regimes addressed in Section \ref{sec:casestudy}, the guess-and-verify approach would require proving existence and uniqueness of a quadruple solving a highly nonlinear system of four algebraic equations with constraints (see \eqref{System-1}--\eqref{System-4} with \eqref{bound-1}--\eqref{bound-2} below). Obviously, the complexity increases with $N$ (see Remark \ref{rem:complex}).

Instead, here we tackle the problem via a \emph{direct probabilistic approach}, by relating the bounded-variation stochastic control problem to a zero-sum game of optimal stopping (Dynkin game) with regime switching. 
Such a connection to a Dynkin game has two main advantages. 
Firstly, it provides the geometry of the state space in terms of regions where it is optimal to intervene and wait. 
Secondly, it allows to achieve the regularity of the control problem's value function $V$ needed for the characterisation of the optimal control policy. 
Our analysis begins by first proving an abstract existence and uniqueness result for the optimal debt-management policy, upon relying on a suitable application of Koml\'os' theorem (see also \cite{DeAFe14} and \cite{KaratzasWang}). Using this result, we apply Theorems 3.1 and 3.2 of \cite{KaratzasWang}, and provide the form of a Dynkin game with regime switching, whose value $v$ coincides with the first derivative of $V$. We then study the Dynkin game by employing mostly probabilistic arguments, and we prove the structure of its saddle point. This consists of a couple of entry times to two connected regions (the so-called ``stopping regions'') whose boundaries $a$ and $b$ depend on the current regime of the Markov chain $Y$. For any such regime $i$, we then prove that $v$ is everywhere continuously differentiable, thus implying the well-known smooth-fit condition of $v$ at the boundaries of the stopping regions. Such a regularity of $v$, in turn, immediately gives that $V$ is $C^2$ for any regime $i$. Hence, through this direct approach, we manage to prove that $V$ is a classical solution to the corresponding dynamic programming equation, which we use to provide the structure of an optimal control rule. At any time, this prescribes to keep the (optimally) controlled debt ratio process inside the interval $[a(Y_t),b(Y_t)]$, either in a minimal way (i.e.\ according to a Skorokhod reflection) if it is already inside, or with an immediate jump, if it suddenly goes outside (i.e.\ according to a lump-sum increase/decrease). Thus, these two levels defining the interval, trigger the timing at which the government should optimally intervene to either increase or decrease the debt ratio.
It is worth noticing that the aforementioned methodology can also be applied to solve other singular or bounded-variation stochastic control problems under regime switching with such an arbitrary number $N\geq 2$ of states. These could be natural directions for future research.

In order to prove the existence of an optimal control policy we need to impose a condition on the marginal cost and benefit of increasing and decreasing the debt ratio, respectively. 
Interestingly, in Section \ref{sec:geometry}, we show that this condition also plays a fundamental role in establishing an ordering of the optimal stopping boundaries $a(i)$ and $b(i)$ across the $N$ different regimes $i$. In particular, this result can be exploited to determine the explicit equations that the optimal boundaries $a$ and $b$ necessarily satisfy. These equations follow from the $C^1$-property of $v$ previously proved. We put in practice our methodology in Section \ref{sec:casestudy} in a case study of $N=2$ regimes. To the best of our knowledge, even the study of the case with $N=2$ regimes appears in this paper for the first time. Finally, we complement our theoretical results by a detailed numerical study on the sensitivity of the optimal debt ratio management policy with respect to the problem's parameters.

Our paper is placed among those few works employing continuous-time singular stochastic control methods for public debt management. In \cite{Cad} and \cite{Cad2}, the debt ratio evolves as a linearly controlled one-dimensional geometric Brownian motion and the government can only reduce its level through singular controls and bounded-velocity controls, respectively. 
The objective is to minimise the total expected costs arising from having debt and intervening on it. Instead, in our model, the government can both reduce and increase the debt ratio, and the dynamics of the latter is affected by two sources of uncertainty: a Brownian motion and a continuous-time Markov chain. In \cite{Ferrari}, the problem is again to only optimally reduce the debt ratio, but in that case the government takes into consideration the evolution of the inflation rate of the country. The latter evolves as an uncontrolled diffusion process which makes the problem a fully two-dimensional singular stochastic control problem. This clearly leads to a completely different mathematical treatment than this paper. 
In \cite{CCF}, a partially informed government on the underlying business conditions, once again only reduces the debt ratio. By adopting filtering techniques, the government's optimal control problem is reduced to one under full information, and then solved in a case study.

Also the literature on singular stochastic control problems with regime switching is still limited, and most of the papers deal only with Markov chains with $N=2$ states and with monotone controls. We refer, e.g., to \cite{Pistorius} and \cite{CadSot} where the optimal dividend problem of actuarial science is formulated as a one-dimensional monotone follower problem; to \cite{Guoetal} for an irreversible investment problem; to the recent \cite{FerrariYang} for an optimal extraction problem. 
In this paper, we provide the complete solution to a singular stochastic control problem under regime switching with $N\geq 2$ states, where the control processes are not monotone but have paths of bounded variation.

The rest of the paper is organised as follows. In Section \ref{setting}, we set up the model and provide the control problem formulation of the government. In Section \ref{sec:existence}, we prove the existence and uniqueness of the optimal debt ratio management policy, and we introduce the associated Dynkin game. In Section \ref{sec:OSgame}, we study the Dynkin game and we characterise its saddle point. These results are then used in Section \ref{sec:OCrule} to construct the optimal debt ratio management policy. The geometry of the problem's state space is studied in Section \ref{sec:geometry}, while a case study with $N=2$ regimes is then considered in Section \ref{sec:casestudy}. We also provide a detailed comparative statics analysis (see Section \ref{sec:compstat}) and comparison with the non-regime-switching case (see Section \ref{sec:comparison}).

\section{Setting and Problem Formulation}
\label{setting}

Let $(\Omega, \mathcal{F}, \P)$ be a complete probability space rich enough to accommodate a one-dimensional Brownian motion $W:=(W_t)_{t\geq 0}$ and a continuous-time Markov chain $Y:=(Y_t)_{ t\geq 0}$. To be more precise, $Y$ is such that for all $t\geq 0$, $Y_{t} \in \mathcal{M}:=\{1,2,\dots,N\}$ for some $N\geq 2$, and it has an irreducible generator matrix $Q:=(q_{ij})_{1 \leq i,j \leq N}$ with
\begin{equation*}
\P(Y_{t+\Delta t}=j\,|\,Y_{t}=i, Y_s, s\leq t):=\left\{
\begin{array}{lr}
\displaystyle q_{ij}\Delta t + o(\Delta t)\,\,\qquad\,\,\, \mbox{if $j\neq i$}\\[+14pt]
\displaystyle 1 + q_{ii}\Delta t + o(\Delta t)\,\,\quad \mbox{if $j=i$}.
\end{array}
\right.
\end{equation*}
Here $q_{ij}\geq 0$ for $(i,j) \in \mathcal{M} \times \mathcal{M}$ with $j\neq i$, and $q_{ii}=-\sum_{j \neq i}q_{ij}<0$ for each $i\in \mathcal{M}$. The Markov chain $Y$ jumps between the states at exponentially distributed random times, and the constant $q_{ij}$ gives the rate of jumping from state $i$ to $j$. We take $Y$ independent of $W$, and denote by $\mathbb{F}:=\{\mathcal{F}_t, t\geq 0\}$ the filtration jointly generated by $W$ and $Y$, and as usual, augmented by $\P$-null sets.

We assume that in absence of any intervention by the government, the debt-to-GDP ratio evolves according to the stochastic differential equation (SDE)
\begin{equation}
\label{freeGBM}
dX^0_t=\big(r + \lambda_{Y_t} - g\big)X^0_t dt + \sigma X^0_t dW_t, \quad t>0, \qquad X^0_0=x > 0.
\end{equation}
These dynamics might be seen as a stochastic version of the one proposed in classical macroeconomic textbooks, see e.g. \cite{BlanchardFischer}.
Here $g \in \R$ is the growth rate of the GDP, whereas $r+\lambda_{Y_t}$ is the interest rate on government debt. This interest rate consists of a basis fixed component $r>0$, and of a time-varying stochastic component $\lambda_{Y_t}$ which represents the additional interest rate that the country has to pay at time $t$ when the macroeconomic conditions are in state $Y_t \in \mathcal{M}$. 
\begin{assumption} 
\label{lY}
Without loss of generality, we assume that $\lambda_1 \geq \lambda_2 \geq \dots \geq \lambda_N$, hence $\lambda_{Y_t} \in [\lambda_N,\lambda_1]$, $\P$-a.s.\ for all $t\geq0$.
\end{assumption}

In the following we will often denote by $X^{x,i,0}$ the unique strong solution to \eqref{freeGBM} starting at time zero from level $x>0$ when $Y_0=i \in \mathcal{M}$; that is,
\begin{equation}
\label{solGBM}
X^{x,i,0}_t = \displaystyle x e^{(r - g - \frac{1}{2}\sigma^2)t + \int_0^t \lambda_{Y^i_s}ds + \sigma W_t}, \qquad t\geq 0. 
\end{equation}
We also denote by $Y^i_t$ the Markov chain $Y_t$ started from state $i\in \mathcal{M}$ at initial time. 

\begin{remark}
\label{rem:debtdyn}
Dynamics \eqref{freeGBM} can be justified in the following way. In absence of any intervention by the government, the nominal debt $D_t$ grows at time $t\geq0$ at rate $r + \lambda_{Y_t}$; i.e., $dD_t = (r + \lambda_{Y_t})D_t dt$. Assuming that the GDP, $\psi$, evolves stochastically as  
$$d\psi_t=g\psi_t dt + \sigma \psi_t dB_t,$$
for some Brownian motion $B$, an application of It\^o's formula and a change of measure shows that $X^0:=D/\psi$ follows the geometric dynamics \eqref{freeGBM}.
\end{remark}

The government can increase or decrease the current level of the debt-to-GDP ratio by, e.g., making investments on infrastructures or imposing austerity policies in the form of spending cuts, respectively.
Denoting by $\eta_t$ the cumulative amount, e.g., of spending cuts made up to time $t\geq0$ in order to reduce the debt-to-GDP ratio, and by $\xi_t$ the cumulative amount, e.g, of investments made up to time $t\geq0$, the dynamics of the adjusted debt-to-GDP ratio read as
\begin{equation}
\label{DGDP}
dX_t=\big(r + \lambda_{Y_t} - g\big)X_t dt + \sigma X_t dW_t + d\xi_t - d\eta_t, \quad t>0, \qquad X_0=x \in \mathbb{R_+}.
\end{equation}

Given that $\xi$ and $\eta$ represent the cumulative interventions, it is natural to model them as nondecreasing stochastic processes, adapted with respect to the available flow of information $\mathbb{F}$. Hence we take $\xi$ and $\eta$ in the set
\begin{align*}
\mathcal{U} := & \{ \vartheta:\Omega \times \R_+ \to \R_+, \mbox{ $\F$-adapted and such that } t \mapsto \vartheta_t \mbox{ is a.s. } \mbox{nondecreasing and left-continuous}\}. \nonumber 
\end{align*}
In the following, we set $\vartheta_0 = 0$ a.s.\ for any $\vartheta \in \mathcal{U}$. We suppose that the government cannot make at the same instant in time interventions to increase and decrease the debt ratio; i.e., we assume that the 
(random) measures $d\xi_\cdot$ and $d\eta_\cdot$ on $\R_+$ induced by the nondecreasing processes $\xi$ and $\eta$, respectively, have disjoint supports.
We then denote by $\varphi$ the process belonging to
\begin{align*}
\mathcal{V} := & \{\zeta:\Omega \times \R_+ \to \R, \mbox{ $\F$-adapted and such that } t \mapsto \zeta_t \mbox{ is a.s.} \\
& \hspace{0.5cm} \mbox{(locally) of bounded variation, left-continuous and } \zeta_0 = 0 \}, \nonumber
\end{align*}
whose unique minimal decomposition is given by the two nondecreasing processes $\xi$ and $\eta$; that is, $\varphi_t = \xi_t - \eta_t$, for all $t \geq 0$.

For any $\varphi = \xi - \eta \in \mathcal{V}$, equation \eqref{DGDP} admits the unique strong solution 
\begin{equation}
\label{solDTGDP}
X^{x,i,\varphi}_t=X^{1,i,0}_t\bigg[x + \int_{[0,t)}\frac{d\xi_s}{X^{1,i,0}_s} - \int_{[0,t)}\frac{d\eta_s}{X^{1,i,0}_s}\bigg], \qquad t\geq 0,
\end{equation}
where we have stressed the dependency on $\varphi\in \mathcal{V}$ and on the initial datum $(x,i)\in \R_+ \times \mathcal{M}$ by writing $X^{x,i,\varphi}$. Here, $X^{x,i,0}$ is as in \eqref{solGBM}, and it is the unique strong solution to \eqref{DGDP} when $\xi=\eta\equiv0$ and therefore $\varphi \equiv 0$.

Having a debt ratio level $X_t$ under the state $Y_t$ at time $t\geq 0$, the government incurs an instantaneous cost $h(X_t, Y_t)$. This may be interpreted as an opportunity cost that depends on the current macroeconomic conditions and results from private investments' crowding out, less room for financing public investments, and from a tendency to suffer low subsequent growth.

In the rest of the paper, we set $g_x(x,i):=\frac{\partial g}{\partial x}(x,i)$ for any differentiable function $g : \R \times \M \to \R$, and we make the following standing assumption on the running cost function $h:\R \times \M \mapsto \R_+$.
\begin{assumption}
\label{ass:h}
For any $i \in \M$, we have 
\begin{itemize}
\item[(i)] $x \mapsto h(x,i)$ is strictly convex, continuously differentiable and increasing on $[0,\infty)$, and it is such that $h(x,i)=0$ for any $x\leq0$;
\item[(ii)] the derivative $h_x$ of $h$ satisfies $h_x(0,i)=0$ and $\lim_{x \rightarrow \infty}h_x(x,i)=+\infty$;
\item[(iii)] there exists $m > 1$, $K_1>0$, $K_2>0$ and $K_3>0$ such that $$h(x,i) \leq K_1(1 + |x|^{m}) \quad \text{and} \quad |h_x(x,i)| \leq K_2(1 + |x|^{m-1}), \quad x \in \mathbb{R},$$ and 
$$|h_x(x,i)-h_x(y,i)| \leq K_3 |x - y| (1 + |x|^{(m-2)^+}), \quad (x,y)\in \mathbb{R}^2;$$
\item[(iv)] $h(\cdot,i)$ has finite Legendre transform on $(0,\infty)$; that is, for all $p > 0$ we have $\sup_{x\in \R_+}\big(p x -h(x,i)\big) < \infty.$
\end{itemize}
\end{assumption}

\begin{remark}
\label{rem:h}
It is worth noticing that a cost function of the form $h(x,i) = \frac{\varkappa(i)}{2}x^2$ for any  $(x,i) \in \R_+ \times \M$ and $h(x,i)=0$ for any $(x,i) \in \R_- \times \M$ satisfies Assumption \ref{ass:h}. 
Moreover, the assumption $h(0,i)=0$ is without loss of generality, since if $h(0,i) = h_o(i) > 0$ then one can always set $\widetilde{h}(x,i):= h(x,i) - h_o(i)$ and write $h(x,i) = \widetilde{h}(x,i) + h_o(i)$, so that the optimisation problem (cf.\ \eqref{eq:valueOC} below) remains unchanged up to an additive constant. Notice that such a requirement, together with $h_x(0,i)=0$, implies that any infinitesimal amount of debt does not generate holding costs for the country; indeed, $h(\varepsilon,i)\approx h_x(0,i)\varepsilon=0$.
\end{remark}

Whenever the government decides to reduce the level of debt ratio, it incurs an intervention cost that is proportional to the amount of debt reduction (see also \cite{Cad} and \cite{Ferrari}). This might be seen as a measure of the social and financial consequences deriving from a debt-reduction policy, and the associated marginal cost $c_1>0$ allows to express it in monetary terms.
On the other hand, the government can increase the current level of debt ratio (e.g.\ through investments in infrastructure, healthcare, education and research, etc.), and we assume that this has a positive social and financial effect, thus overall reduces the total expected ``costs'' of the government. The marginal benefit of increasing the debt ratio is a constant $c_2>0$. 

We further assume that the government discounts at a strictly positive time-varying stochastic rate $\rho_{Y_t} \in [\ul\rho, \ol\rho]$, when the macroeconomic conditions are in state $Y_t \in \mathcal{M}$ at time $t \geq 0$. 
Then, the total expected cost functional, net of investment benefits, is
\begin{equation}
\label{eq:J}
\mathcal{J}_{x,i}(\varphi):=\E_{(x,i)}\bigg[\int_0^{\infty} e^{-\int_0^t \rho_{Y_s}ds} h(X^{\varphi}_t, Y_t) dt + c_1 \int_0^{\infty}  e^{-\int_0^t \rho_{Y_s}ds} d\eta_t - c_2 \int_0^{\infty} e^{-\int_0^t \rho_{Y_s}ds} d\xi_t\bigg],
\end{equation}
where, for any $(x,i) \in \mathcal{O}:=\R_+ \times \mathcal{M}$, $\E_{(x,i)}$ denotes the expectation under the measure $\P_{(x,i)}(\,\cdot\,):=\P(\,\cdot\,|X^{\varphi}_0=x,Y_0=i)$. In the following we will equivalently write $\E[f(X^{x,i,\varphi}_t, Y_t^i)]=\E_{(x,i)}[f(X^{\varphi}_t, Y_t)]$, for any $t\geq 0$ and Borel-measurable function $f:\mathbb{R} \times \M \to \mathbb{R}$ such that the previous expectation is finite.
Hereafter, we use the notation $\int_0^{t} (\,\cdot\,) d\vartheta_s = \int_{[0,t)} (\,\cdot\,) d\vartheta_s$, for $\vartheta \in \{\xi,\eta\}$ and any $t \in [0,\infty]$.

For any given initial value of the debt ratio $x\geq0$ and of the state of the economy $i\in \mathcal{M}$, we assume that the government will not use a debt ratio management policy leading to infinite cost/benefit of interventions, and given that the debt ratio level is always a positive number, the government picks its debt ratio management policy $\varphi$ in the set 
\begin{align}
\label{setA}
\displaystyle \mathcal{A}(x,i):=\Big\{\varphi \in \mathcal{V}:\,\E_{(x,i)}\bigg[\int_0^{\infty} e^{-\int_0^t \rho_{Y_s}ds} \big(d\eta_t + d\xi_t\big)\bigg]< \infty\,\,\text{and}\,\,X^{x,i,\varphi}_t\geq 0\quad \P\otimes dt-\mbox{a.e.}\Big\}.
\end{align}
The government's aim is therefore to solve
\begin{equation}
\label{eq:valueOC}
V(x,i):=\inf_{\varphi \in \mathcal{A}(x,i)}\mathcal{J}_{x,i}(\varphi), \qquad (x,i) \in \mathcal{O}.
\end{equation}
We will refer to $V$ as the value function, and any debt ratio management policy belonging to $\mathcal{A}$ will be called admissible.

The following assumption on the model's parameters will hold true in the rest of this paper.
\begin{assumption}
\label{ass:c12}
The model's parameters satisfy
$$c_1 ({\ul\rho - r + g - \lambda_1}) > c_2 ({\ol\rho - r + g - \lambda_N})  $$
\end{assumption}

\noindent Since $\lambda_N \leq \lambda_1$ and $\ul\rho < \ol\rho$, Assumption \ref{ass:c12} in particular implies the condition $c_1>c_2$. This is typically assumed in the literature on bounded-variation stochastic control problems in order to ensure well-posedness of the optimisation problem (see, e.g., \cite{DeAFe14} and \cite{GuoPham}) and to avoid arbitrage opportunities.
Assumption \ref{ass:c12} will play a central role in the proof of existence of an optimal debt ratio management policy for problem \eqref{eq:valueOC} (see the proof of Lemma \ref{lem:ui} below). It is also worth noticing that Assumption \ref{ass:c12} will have important implications on the geometry of the state space (see Proposition \ref{prop:structure} below).

\section{On the Existence of the Optimal Debt Ratio Management Policy}
\label{sec:existence}

In this section we prove some preliminary properties of the value function, the existence and uniqueness of an optimal debt ratio management policy for problem \eqref{eq:valueOC}, and its relation to a zero-sum game of optimal stopping (Dynkin game).

We start with the following result, whose proof is standard and therefore omitted.

\begin{proposition}
\label{prop:preliminaryV}
The value function $V$ of \eqref{eq:valueOC} is such that $x \mapsto V(x,i)$ is convex on $\R_+$ for any $i \in \mathcal{M}$. Moreover, 
$V(x,i) \leq c_1 x$ for all $(x,i) \in \mathcal{O}$. 
\end{proposition}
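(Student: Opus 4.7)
The plan is to handle the two claims separately, exploiting the linearity of the controlled dynamics \eqref{solDTGDP} in the initial datum and in the control. For the \textbf{linear upper bound}, I would exhibit the explicit policy that wipes out the debt-to-GDP ratio at time $0$: set $\xi \equiv 0$ and let $\eta$ jump by $x$ at time $0$, i.e.\ $\eta_0 = 0$ and $\eta_t = x$ for every $t > 0$. Substituting $(\xi, \eta)$ into \eqref{solDTGDP} gives $X^{x,i,\varphi}_t = 0$ for all $t > 0$, so $h(0) = 0$ (Assumption \ref{ass:h}(i)) kills the running cost and the intervention term collapses to $c_1 x$. Admissibility is immediate since $X \geq 0$ and $\E[\int_0^\infty e^{-\rho t}(d\xi_t + d\eta_t)] = x < \infty$, so $V(x,i) \leq c_1 x$.

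For the \textbf{convexity} of $x \mapsto V(x,i)$, fix $i \in \mathcal{M}$, $x_1, x_2 \geq 0$, $\lambda \in [0,1]$, and set $\bar{x} := \lambda x_1 + (1-\lambda) x_2$. Given $\varepsilon > 0$, pick $\varepsilon$-optimal $\varphi^j \in \mathcal{A}(x_j, i)$ with minimal decompositions $(\xi^j, \eta^j)$, and form $\varphi^\lambda := \lambda \varphi^1 + (1-\lambda) \varphi^2$ together with the (generally non-minimal) nondecreasing pair $\bar{\xi} := \lambda \xi^1 + (1-\lambda) \xi^2$, $\bar{\eta} := \lambda \eta^1 + (1-\lambda) \eta^2$, so that $\varphi^\lambda = \bar{\xi} - \bar{\eta}$. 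Linearity of \eqref{solDTGDP} in $(x, \xi, \eta)$ yields, pathwise,
\begin{equation*}
X^{\bar{x},i,\varphi^\lambda}_t = \lambda X^{x_1,i,\varphi^1}_t + (1-\lambda) X^{x_2,i,\varphi^2}_t \geq 0,
\end{equation*}
and sublinearity of the total variation gives $\varphi^\lambda \in \mathcal{A}(\bar{x},i)$. Convexity of $h$ (Assumption \ref{ass:h}(i)) handles the running-cost term, while the intervention term evaluated on $(\bar{\xi}, \bar{\eta})$ is, by linearity, exactly the convex combination of the individual intervention terms for $\varphi^1$ and $\varphi^2$.

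The \textbf{one care-demanding point} is that \eqref{eq:J} is defined via the \emph{minimal} decomposition $(\xi^\lambda_*, \eta^\lambda_*)$ of $\varphi^\lambda$, which in general differs from $(\bar{\xi}, \bar{\eta})$. Since $d\xi^\lambda_*$ and $d\eta^\lambda_*$ are the positive and negative parts of the signed measure $d\varphi^\lambda$, any other nondecreasing decomposition dominates them: $\bar{\xi} = \xi^\lambda_* + \mu$ and $\bar{\eta} = \eta^\lambda_* + \mu$ for a common nondecreasing ``cancellation'' process $\mu$. Hence the intervention cost with the minimal decomposition is smaller than that with $(\bar{\xi}, \bar{\eta})$ by $(c_1 - c_2) \int_0^\infty e^{-\rho t} d\mu_t \geq 0$, where Assumption \ref{ass:c12} ensures $c_1 > c_2$. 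Combining with the previous paragraph yields $\mathcal{J}_{\bar{x},i}(\varphi^\lambda) \leq \lambda \mathcal{J}_{x_1,i}(\varphi^1) + (1-\lambda) \mathcal{J}_{x_2,i}(\varphi^2) \leq \lambda V(x_1,i) + (1-\lambda) V(x_2,i) + \varepsilon$; taking the infimum over $\varphi^1, \varphi^2$ and letting $\varepsilon \to 0$ delivers the claim.
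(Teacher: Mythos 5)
Your proposal is correct; the paper itself omits the proof of this proposition as ``standard'', and your argument is exactly the standard one it has in mind: the lump-sum liquidation policy $\eta_t = x$ for $t>0$ gives the bound $V(x,i)\leq c_1 x$, and convexity follows from the affine dependence of \eqref{solDTGDP} on $(x,\xi,\eta)$ together with the convexity of $h$ and the linearity of the intervention costs. You also correctly identified and resolved the one genuinely delicate point, namely that $\mathcal{J}$ is evaluated on the \emph{minimal} decomposition of $\varphi^\lambda$, which can only decrease the cost relative to $(\bar{\xi},\bar{\eta})$ since $c_1>c_2$ by Assumption \ref{ass:c12}.
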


To take care of the infinite time-horizon of our problem we need the following assumption, which will also hold throughout the rest of this paper.
\begin{assumption}
\label{ass:rho}
Recall $m$ from Assumption \ref{ass:h}. The model's parameters satisfy
$$\ul\rho > \Big((r - g + \lambda_1) \vee \big(m(r - g + \lambda_1) + \frac{\sigma^2}{2}m(m-1)\big)\Big)^{+}.$$
\end{assumption}
Assumption \ref{ass:rho} may be justified by noting that the government, which runs only for a limited amount of years,  is more concerned about the present than the future, and therefore discounts future costs and benefits at a sufficiently large rate. Moreover, a combination of the condition $\ul\rho > (m(r - g + \lambda_1) + \frac{\sigma^2}{2}m(m-1))^+$ with Assumption \ref{ass:h}-(iii), ensures that the trivial admissible policy ``do not intervene at all on the debt ratio'' yields a finite expected cost, even if it is not necessarily the minimal one. 

Notice that setting 
\begin{equation}
\label{controlsbar}
\overline{\xi}_t:= \int_{0}^t \frac{d\xi_s}{X^{1,i,0}_s} \qquad\mbox{and}\qquad  \overline{\eta}_t:= \int_{0}^t \frac{d\eta_s}{X^{1,i,0}_s},\qquad \overline{\xi}_0=0=\overline{\eta}_0,
\end{equation}
and $\overline{\varphi}:=\overline{\xi} - \overline{\eta}$, the solution to \eqref{solDTGDP} rewrites as
\begin{equation}
\label{Xbar}
X^{x,i,\varphi}_t = X^{1,i,0}_t\big[x + \overline{\xi}_t  - \overline{\eta}_t\big], \qquad t\geq 0.
\end{equation}
The quantities $d\overline{\xi}_t$ and $d\overline{\eta}_t$ are the sizes of interventions made at time $t\geq0$, per unit of debt ratio in absence of any intervention.

Then, by defining $\overline{\mathcal{A}}$, for any $(x,i) \in \mathcal{O}$, as 
$$\displaystyle \overline{\mathcal{A}}(x,i):=\Big\{\overline{\varphi} \in \mathcal{V}:\,\E\bigg[\int_0^{\infty} e^{-\int_0^t \rho_{Y_s^i}ds} X^{1,i,0}_t \big(d\overline{\eta}_t + d\overline{\xi}_t\big)\bigg]< \infty\,\,\text{and}\,\,x + \overline{\xi}_t  - \overline{\eta}_t \geq 0\quad \P\otimes dt-\mbox{a.e.}\Big\},$$
it is easy to see that the mapping $\mathcal{A}(x,i) \ni \varphi \mapsto \overline{\varphi} \in \overline{\mathcal{A}}(x,i)$ is one-to-one and onto, and one can also write for any $(x,i) \in \mathcal{O}$
\begin{align}
\label{reprV}
V(x,i) =\inf_{\overline{\varphi} \in \overline{\mathcal{A}}(x,i)} \E\bigg[&\int_0^{\infty} e^{-\int_0^t \rho_{Y_s^i}ds} h\big(X^{1,i,0}_t\big[x + \overline{\xi}_t  - \overline{\eta}_t\big], Y_t^i\big) dt \nonumber\\
&+ c_1 \int_0^{\infty} e^{-\int_0^t \rho_{Y_s^i}ds} X^{1,i,0}_t d\overline{\eta}_t 
- c_2 \int_0^{\infty} e^{-\int_0^t \rho_{Y_s^i}ds} X^{1,i,0}_t d\overline{\xi}_t\bigg].
\end{align}
The definitions of $\overline{\xi}$ and $\overline{\eta}$ in \eqref{controlsbar} will be used in the proof of the next result.

\begin{lemma}
\label{lem:ui}
Let $(x,i) \in \mathcal{O}$ be arbitrary but fixed, and let $(\varphi^n)_{n\in \mathbb{N}}:=(\xi^n, \eta^n)_{n\in \mathbb{N}}$ be a minimising sequence for problem \eqref{eq:valueOC} (equivalently, \eqref{reprV}). Then
\begin{equation}
\label{eq:ui}
\sup_{n \in \mathbb{N}}\E_{(x,i)}\bigg[\int_0^{\infty} e^{-\int_0^t \rho_{Y_s}ds} d\eta^n_t + \int_0^{\infty} e^{-\int_0^t \rho_{Y_s}ds} d\xi^n_t\bigg] < \infty.
\end{equation}
\end{lemma}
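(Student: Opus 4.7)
The plan is to derive an Itô-type identity relating the discounted increments of $\xi^n$ and $\eta^n$ through the controlled state, and then use it to eliminate one of them in the cost functional \eqref{eq:J}, producing a lower bound of the form
\begin{equation*}
\mathcal{J}_{x,i}(\varphi^n) \;\geq\; (c_1-c_2)\,\E\bigg[\int_0^\infty e^{-\rho t} d\eta^n_t\bigg] + \text{const},
\end{equation*}
and symmetrically with $\xi^n$ in place of $\eta^n$. Since $(\varphi^n)$ is minimising and $V(x,i)\leq c_1 x$ by Proposition \ref{prop:preliminaryV}, and since Assumption \ref{ass:c12} forces $c_1>c_2$ (because $\kappa_2:=\rho-r+g-\lambda_N\geq\kappa_1:=\rho-r+g-\lambda_1>0$ by Assumption \ref{lY} and Assumption \ref{ass:rho}), both suprema $\sup_n\E[\int e^{-\rho t} d\eta^n_t]$ and $\sup_n\E[\int e^{-\rho t} d\xi^n_t]$ are then finite, and \eqref{eq:ui} follows by adding.

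The identity is produced by applying Itô's formula to $e^{-\rho t} X^{x,i,\varphi^n}_t$, with standard localisation of the Brownian stochastic integral via $\tau_k:=\inf\{t\geq 0:X^n_t>k\}\wedge T$ followed by $k\to\infty$, giving
\begin{equation*}
\E\bigg[\int_0^T e^{-\rho t} d\eta^n_t\bigg] - \E\bigg[\int_0^T e^{-\rho t} d\xi^n_t\bigg] = x - \E[e^{-\rho T} X^n_T] - \E\bigg[\int_0^T e^{-\rho t} \beta_t X^n_t dt\bigg],
\end{equation*}
with $\beta_t:=\rho-r+g-\lambda_{Y_t}\in[\kappa_1,\kappa_2]$. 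The crucial step is to show $\lim_{T\to\infty}\E[e^{-\rho T} X^n_T]=0$ for every admissible $\varphi^n$. From $X^n_t\leq X^{1,i,0}_t(x+\overline{\xi}^n_t)$ via \eqref{Xbar} and $\overline{\eta}^n_t\geq 0$, Fubini together with the conditional bound $\E[X^{1,i,0}_T/X^{1,i,0}_s\mid\mathcal{F}_s]\leq e^{(r-g+\lambda_1)(T-s)}$ (which uses the independence of the Brownian increment, the Markov property of $Y$, and $\lambda_Y\leq\lambda_1$) controls $\E[e^{-\rho T} X^n_T]$ by $x e^{-\kappa_1 T}+\int_0^T e^{-\kappa_1(T-s)}e^{-\rho s}\,\E[d\xi^n_s]$; both summands vanish as $T\to\infty$, the first by $\kappa_1>0$ and the second by dominated convergence against the finite measure $e^{-\rho s}\E[d\xi^n_s]$ provided by admissibility.

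Passing to the limit therefore yields the clean relation $\E[\int_0^\infty e^{-\rho t} d\xi^n_t]=\E[\int_0^\infty e^{-\rho t} d\eta^n_t]-x+\E[\int_0^\infty e^{-\rho t}\beta_t X^n_t dt]$. Substituting into \eqref{eq:J} rewrites the cost as
\begin{equation*}
\mathcal{J}_{x,i}(\varphi^n) = \E\bigg[\int_0^\infty e^{-\rho t}\bigl(h(X^n_t)-c_2\beta_t X^n_t\bigr) dt\bigg] + (c_1-c_2)\,\E\bigg[\int_0^\infty e^{-\rho t} d\eta^n_t\bigg] + c_2 x.
\end{equation*}
Since $\beta_t\leq\kappa_2$ and $X^n_t\geq 0$, the integrand in the first term is pointwise bounded below by $h(X^n_t)-c_2\kappa_2 X^n_t\geq -h^*(c_2\kappa_2)$, where $h^*(p):=\sup_{z\geq 0}(pz-h(z))$ is finite by Assumption \ref{ass:h}(iv); this yields the announced lower bound with constant $c_2 x - h^*(c_2\kappa_2)/\rho$. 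The symmetric manipulation, in which $\E[\int e^{-\rho t} d\eta^n_t]$ is eliminated instead, produces the analogous lower bound $(c_1-c_2)\E[\int e^{-\rho t} d\xi^n_t]+c_1 x-h^*(c_1\kappa_2)/\rho$.

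The main obstacle I expect is the proof that $\E[e^{-\rho T} X^n_T]\to 0$: the controlled process depends non-linearly on the interventions through \eqref{Xbar}, so the bound must be set up by linearising via the explicit representation, swapping the time integration against the random measure $d\xi^n_s$ with the outer expectation (Fubini), and exploiting that the discount rate strictly exceeds the worst-case debt growth rate $r-g+\lambda_1$ (Assumption \ref{ass:rho}) so that the kernel $e^{-\kappa_1(T-s)}$ both dominates by $1$ and tends pointwise to $0$. Once this is in place, everything else is algebraic rearrangement plus the Fenchel inequality.
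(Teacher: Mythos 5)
Your overall strategy is sound and genuinely different from the paper's, but there is one concrete gap: the localisation step of the It\^o argument is not ``standard'' in the way you claim, and the obvious tools fail in the direction you need. After stopping at $\tau_k\wedge T$ you obtain
\begin{equation*}
\E_{(x,i)}\Big[\textstyle\int_0^{\tau_k\wedge T}e^{-\rho t}\,d\eta^n_t\Big]-\E_{(x,i)}\Big[\textstyle\int_0^{\tau_k\wedge T}e^{-\rho t}\,d\xi^n_t\Big]
= x-\E_{(x,i)}\big[e^{-\rho(\tau_k\wedge T)}X^n_{\tau_k\wedge T}\big]-\E_{(x,i)}\Big[\textstyle\int_0^{\tau_k\wedge T}e^{-\rho t}\beta_t X^n_t\,dt\Big],
\end{equation*}
and to get the inequality your substitution requires (an \emph{upper} bound on $\E[\int e^{-\rho t}d\xi^n_t]$ in terms of $\E[\int e^{-\rho t}d\eta^n_t]$, and symmetrically) you must show $\limsup_k\E[e^{-\rho(\tau_k\wedge T)}X^n_{\tau_k\wedge T}]\leq \E[e^{-\rho T}X^n_T]$. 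Fatou only gives the reverse inequality, and $e^{-\rho t}X^{1,i,0}_t$ is a \emph{strict} supermartingale, so mass escaping along $\tau_k$ cannot be ruled out for free. What is needed is a domination of the form $\E[\sup_{t\leq T}e^{-\rho t}X^n_t]<\infty$; this is true, but proving it requires exactly the same conditioning device you use for the $T\to\infty$ limit, applied now to the running supremum: from \eqref{Xbar}, $\sup_{t\le T}e^{-\rho t}X^n_t\le x\sup_{t}e^{-\rho t}X^{1,i,0}_t+\int_0^T e^{-\rho s}\big(\sup_{t\in[s,T]}e^{-\rho(t-s)}X^{1,i,0}_t/X^{1,i,0}_s\big)d\xi^n_s$, and one must then check that $\E\big[\sup_{u\geq 0}e^{\sigma W_u-(\kappa_1+\sigma^2/2)u}\big]<\infty$, which holds precisely because $\kappa_1=\rho-(r-g+\lambda_1)>0$ by Assumption \ref{ass:rho} (the tail exponent of the supremum is $1+2\kappa_1/\sigma^2>1$). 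Without this maximal estimate — which you neither state nor prove, having flagged only the $T\to\infty$ limit as the obstacle — the identity on which everything else rests is not established.

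For comparison, the paper's proof avoids It\^o and stochastic integrals entirely: it lower-bounds the running cost by $\varepsilon x-\kappa_\varepsilon$ via the Legendre transform, converts $\E[\int e^{-\rho t}X^{1,i,0}_t\overline{\vartheta}^n_t\,dt]$ into $\E[\int(\cdot)\,d\vartheta^n_s]$ by Tonelli and the Dellacherie--Meyer projection (your \eqref{ui3}), and then bounds the resulting kernel between $\beta_2 e^{-\rho s}$ and $\beta_1 e^{-\rho s}$; two choices of $\varepsilon$ then decouple the two controls. Because the kernel is only bounded by its worst case separately for $\xi$ and $\eta$, that route needs the full strength of Assumption \ref{ass:c12} ($c_1\beta_2>c_2\beta_1$), whereas your exact It\^o identity keeps the drift term $\beta_t X^n_t$ intact and, once the localisation is repaired, yields the conclusion under the weaker condition $c_1>c_2$ alone (plus Assumption \ref{ass:h}-(iv)). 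That is a genuine gain in sharpness for this lemma; but as written, the proof is incomplete at the step identified above.
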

\begin{proof}
Let $(x,i) \in \mathcal{O}$ be given and fixed, and let $(\varphi^n)_{n\in \mathbb{N}}:=(\xi^n, \eta^n)_{n\in \mathbb{N}}$ be a minimising sequence for problem \eqref{eq:valueOC} (equivalently, \eqref{reprV}). Without loss of generality, we can take $(\varphi^n)_{n\in \mathbb{N}}$ such that
$$1 + V(x,i) \geq \mathcal{J}_{x,i}(\varphi^n), \qquad \mbox{for any}\,\,n,$$
and then recalling that $V(x,i) \leq c_1 x$ due to Proposition \ref{prop:preliminaryV}, it follows  from \eqref{eq:J} and \eqref{Xbar} that 
\begin{align}
\label{ui1}
 1 + {c_1} x \geq 1 + V(x,i) \geq \mathcal{J}_{x,i}(\varphi^n) = &\,\E\bigg[\int_0^{\infty} e^{-\int_0^t \rho_{Y_s^i}ds} h\big(X^{1,i,0}_t\big[x + \overline{\xi}^n_t  - \overline{\eta}^n_t\big], Y_t^i \big) dt\bigg] \nonumber \\
&+ \E_{(x,i)}\bigg[c_1 \int_0^{\infty} e^{-\int_0^t \rho_{Y_s}ds} d{\eta}^n_t - c_2 \int_0^{\infty} e^{-\int_0^t \rho_{Y_s}ds} d{\xi}^n_t\bigg].
\end{align}
By Assumption \ref{ass:h}-(iv), for any $\varepsilon > 0$ there exists $\kappa_{\varepsilon}>0$ such that $h(x,i)\geq \varepsilon x - \kappa_{\varepsilon}$ for any $(x,i) \in \mathbb{R}_+\times \mathcal{M}$. 
Taking this into account together with the monotonicity of $h(\cdot,i)$ in Assumption \ref{ass:h}-(i), \eqref{solGBM} and the positivity of $xX^{1,i,0}$, we can therefore continue from \eqref{ui1} by writing 
\begin{align}
\label{ui2}
1 + {c_1} x 
\geq &-\frac{\kappa_{\varepsilon}}{\ul\rho} + \varepsilon \E\bigg[\int_0^{\infty} e^{-\int_0^t \rho_{Y_s^i}ds} X^{1,i,0}_t \big(\overline{\xi}^n_t - \overline{\eta}^n_t\big) dt\bigg] \nonumber \\
&+ \E_{(x,i)}\bigg [ {c_1} \int_0^{\infty} e^{-\int_0^t \rho_{Y_s}ds} d{\eta}^n_t - {c_2} \int_0^{\infty} e^{-\int_0^t \rho_{Y_s}ds} d{\xi}^n_t\bigg]. 
\end{align}

Notice now that due to \eqref{controlsbar} we have for either $(\vartheta^n, \overline{\vartheta}^n) = (\xi^n, \overline{\xi}^n)$ or $(\vartheta^n, \overline{\vartheta}^n) = (\eta^n, \overline{\eta}^n)$ that
\begin{align}
\label{ui3}
\E\bigg[\int_0^{\infty} e^{-\int_0^t \rho_{Y_s^i}ds} X^{1,i,0}_t \, \overline{\vartheta}^n_t\, dt\bigg] 
& = \E\bigg[\int_0^{\infty} e^{-\int_0^t \rho_{Y_s^i}ds} X^{1,i,0}_t \Big(\int_{0}^t\frac{d\vartheta^n_u}{X^{1,i,0}_u}\Big) dt\bigg]\nonumber \\
& = \E\bigg[\int_0^{\infty} \frac{1}{X^{1,i,0}_u}\E\bigg[\int_u^{\infty}e^{-\int_0^t \rho_{Y_s^i}ds} X^{1,i,0}_t\,dt\Big|\mathcal{F}_u\bigg]\,d\vartheta^n_u\bigg],
\end{align}
where Tonelli's theorem and Theorem 57 in Chapter VI of \cite{DM} imply the last equality.

We now want to find a lower bound for 
$\E[\int_u^{\infty} e^{-\int_0^t \rho_{Y_s^i}ds} X^{1,i,0}_t \,dt |\mathcal{F}_u]/{X^{1,i,0}_u}$. 
To accomplish that we notice that \eqref{solGBM}, the fact that $\lambda_{Y_t} \geq \lambda_N$ and $\rho_{Y_t} \leq \ol\rho$, $\P$-a.s.\ for all $t\geq 0$, and a change of variable of integration give
\begin{align}
\label{estimate1}
\frac{1}{X^{1,i,0}_u}\E\bigg[\int_u^{\infty} e^{-\int_0^t \rho_{Y_s^i}ds} X^{1,i,0}_t\,dt\Big|\mathcal{F}_u\bigg] 
& \geq e^{-\int_0^u \rho_{Y_s^i}ds} \,
\E\bigg[\int_0^{\infty} e^{-(\ol\rho - r + g - \lambda_N + \frac{1}{2}\sigma^2)t} e^{\sigma(W_{t+u}-W_u)} \,dt\Big|\mathcal{F}_u\bigg]\nonumber \\
& = e^{-\int_0^u \rho_{Y_s^i}ds} \int_0^{\infty} e^{-(\ol\rho - r + g - \lambda_N)t} dt 
= e^{-\int_0^u \rho_{Y_s^i}ds} \beta_2, 
\end{align}
where we have set $\beta_2:=(\ol\rho-r+g-\lambda_N)^{-1}<\infty$ by Assumption \ref{ass:rho}. In \eqref{estimate1} the independence of Brownian increments, the stationarity of their distribution, and the formula for the Laplace transform of a Gaussian random variable have been employed in the penultimate step.
Analogously, but using now that $\lambda_{Y_t} \leq \lambda_1$ and $\rho_{Y_t} \geq \ul\rho$, $\P$-a.s.\ for all $t\geq 0$, we find
\begin{align}
\label{estimate2}
\frac{1}{X^{1,i,0}_u}\E\bigg[\int_u^{\infty} e^{-\int_0^t \rho_{Y_s^i}ds} X^{1,i,0}_t\,dt\Big|\mathcal{F}_u\bigg] \leq e^{-\int_0^u \rho_{Y_s^i}ds} \int_0^{\infty} e^{-(\ul\rho - r + g - \lambda_1)t} dt 
= e^{-\int_0^u \rho_{Y_s^i}ds} \beta_1,
\end{align}
with $\beta_1:=(\ul\rho-r+g-\lambda_1)^{-1}<\infty$ by Assumption \ref{ass:rho}.

Recalling \eqref{ui3} and using \eqref{estimate1} and \eqref{estimate2} we then find from \eqref{ui2} that
\begin{align}
\label{ui4}
&1 + {c_1} x + \frac{\kappa_{\varepsilon}}{\ul\rho} \geq 
\big(\varepsilon \beta_2 - {c_2}\big)\E_{(x,i)}\bigg[\int_0^{\infty} e^{-\int_0^t \rho_{Y_s}ds} d{\xi}^n_t \bigg] + \big(c_1-\varepsilon \beta_1 \big)\E_{(x,i)}\bigg[\int_0^{\infty}  e^{-\int_0^t \rho_{Y_s}ds} d{\eta}^n_t \bigg].
\end{align}
The previous estimate holds for any $\varepsilon>0$. Hence setting $\Theta_{\varepsilon}(x):=1 + {c_1} x + \frac{\kappa_{\varepsilon}}{\ul\rho}$, we can take $\varepsilon={c_2}/\beta_2$ in \eqref{ui4} and obtain
$$\beta_2 \,\Theta_{{c_2}/{\beta_2}}(x) \geq \big({c_1}\beta_2 - {c_2}\beta_1)\E_{(x,i)}\bigg[\int_0^{\infty} e^{-\int_0^t \rho_{Y_s}ds} d{\eta}^n_t \bigg].$$
On the other hand, by taking $\varepsilon={c_1}/\beta_1$ in \eqref{ui4} we have
$$\beta_1\Theta_{{c_1}/{\beta_1}}(x) \geq \big({c_1}\beta_2 - {c_2}\beta_1)\E_{(x,i)}\bigg[\int_0^{\infty}  e^{-\int_0^t \rho_{Y_s}ds} d{\xi}^n_t \bigg].$$
Noticing that ${c_1}\beta_2 - {c_2}\beta_1>0$ by Assumption \ref{ass:c12}, the last two inequalities then give
$$\E_{(x,i)}\bigg[\int_0^{\infty}  e^{-\int_0^t \rho_{Y_s}ds} \big(d{\xi}^n_t + d{\eta}^n_t\big)\bigg]
\leq \frac{\beta_2\Theta_{{c_2}/{\beta_2}}(x) + \beta_1\Theta_{{c_1}/{\beta_1}}(x)}{{c_1}\beta_2 - {c_2}\beta_1},$$
which clearly implies \eqref{eq:ui} since the right-hand side of the latter is independent of $n$.
\end{proof}

In view of Lemma \ref{lem:ui}, we can now prove the main result of this section.

\begin{theorem}
\label{thm:existence}
Let $(x,i)\in \mathcal{O}$ be given and fixed. There exists a unique (up to undistinguishability) optimal debt ratio management policy $\varphi^{\star}=\xi^{\star}-\eta^{\star}$ for the problem \eqref{eq:valueOC}.
\end{theorem}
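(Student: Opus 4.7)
My plan is to prove uniqueness via strict convexity of $\mathcal{J}_{x,i}$ and to establish existence through a Komlós-type extraction from a minimising sequence combined with a lower semi-continuity argument.

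For uniqueness, I would use that $\varphi \mapsto X^{x,i,\varphi}$ is affine (cf.\ \eqref{solDTGDP}) while $h$ is strictly convex on $[0,\infty)$ by Assumption \ref{ass:h}. Hence if $\varphi^1, \varphi^2 \in \mathcal{A}(x,i)$ are both optimal, so is $(\varphi^1+\varphi^2)/2$, and equality in the convexity inequality forces $X^{x,i,\varphi^1}_t = X^{x,i,\varphi^2}_t$ for $\P\otimes dt$-a.e.\ $(\omega,t)$. Feeding this back into \eqref{DGDP} gives $\xi^1 - \eta^1 = \xi^2 - \eta^2$ up to undistinguishability, and the disjoint supports of $d\xi$ and $d\eta$ assumed at the outset make the Jordan decomposition unique, so $\xi^1 = \xi^2$ and $\eta^1 = \eta^2$.

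For existence, I would pick a minimising sequence $(\varphi^n) = (\xi^n - \eta^n)$. Lemma \ref{lem:ui}, together with the pathwise bound $\vartheta^n_t \leq e^{\rho t}\int_0^\infty e^{-\rho s}d\vartheta^n_s$ for $\vartheta \in \{\xi,\eta\}$, shows that $\xi^n_t$ and $\eta^n_t$ are bounded in $L^1(\P)$ uniformly in $n$ at every $t$. A diagonal application of Komlós' theorem over a countable dense set of times then yields a subsequence whose Cesàro means $\widetilde{\xi}^n := n^{-1}\sum_{k=1}^n \xi^k$ and $\widetilde{\eta}^n := n^{-1}\sum_{k=1}^n \eta^k$ converge $\P$-a.s.; by monotonicity in $t$ the limits extend to nondecreasing, left-continuous, $\mathbb{F}$-adapted processes $\xi^\star, \eta^\star$ with $\xi^\star_0 = \eta^\star_0 = 0$. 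Convexity of $\mathcal{J}_{x,i}$ (from affinity of $\varphi\mapsto X^{x,i,\varphi}$ and convexity of $h$) implies $\mathcal{J}_{x,i}(\widetilde{\varphi}^n) \leq n^{-1}\sum_{k=1}^n \mathcal{J}_{x,i}(\varphi^k) \to V(x,i)$, and Lemma \ref{lem:ui} together with Fatou makes $\varphi^\star := \xi^\star - \eta^\star$ admissible (the non-negativity constraint $X^{x,i,\varphi^\star}_t \geq 0$ passes to the limit because $X^{x,i,\widetilde{\varphi}^n}_t = n^{-1}\sum_{k=1}^n X^{x,i,\varphi^k}_t \geq 0$).

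The principal obstacle is to establish the lower semi-continuity $\mathcal{J}_{x,i}(\varphi^\star) \leq \liminf_n \mathcal{J}_{x,i}(\widetilde{\varphi}^n)$, because the benefit term $-c_2\int e^{-\rho t}d\xi_t$ has the wrong sign for a direct Fatou argument. To overcome it, I would apply Itô's formula to $e^{-\rho t}X^{\varphi}_t$ and take expectations, using Assumption \ref{ass:rho} to justify the transversality $\E[e^{-\rho T}X^{\varphi}_T] \to 0$ for any $\varphi \in \mathcal{A}(x,i)$; this yields
\begin{equation*}
\E\int_0^\infty e^{-\rho t}\bigl(d\xi_t - d\eta_t\bigr) = -x + \E\int_0^\infty e^{-\rho t}\bigl(\rho + g - r - \lambda_{Y_t}\bigr)X^{\varphi}_t\, dt,
\end{equation*}
and substituting this identity into \eqref{eq:J} recasts the cost as
\begin{equation*}
\mathcal{J}_{x,i}(\varphi) = c_2 x + \E\int_0^\infty e^{-\rho t}\Bigl[h(X^{\varphi}_t) - c_2(\rho + g - r - \lambda_{Y_t})X^{\varphi}_t\Bigr]\, dt + (c_1 - c_2)\,\E\int_0^\infty e^{-\rho t}\, d\eta_t.
\end{equation*}
In this equivalent form the second integrand is bounded below by $-\sup_{x\geq 0}\bigl(c_2(\rho + g - r - \lambda_N)x - h(x)\bigr) > -\infty$, finite by Assumption \ref{ass:h}(iv), while the last term is nonnegative because Assumption \ref{ass:c12} forces $c_1 > c_2$. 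Fatou's lemma applied to each of the three summands---leveraging the pointwise Lebesgue-a.e.\ convergence $X^{x,i,\widetilde{\varphi}^n}_t \to X^{x,i,\varphi^\star}_t$ with continuity of $h$, and the inequality $\liminf_n(A_n+B_n) \geq \liminf_n A_n + \liminf_n B_n$ valid for sequences bounded below---then gives $\mathcal{J}_{x,i}(\varphi^\star) \leq \liminf_n \mathcal{J}_{x,i}(\widetilde{\varphi}^n) = V(x,i)$. Combined with $V(x,i) \leq \mathcal{J}_{x,i}(\varphi^\star)$, which is trivial from $\varphi^\star \in \mathcal{A}(x,i)$, this closes the existence argument.
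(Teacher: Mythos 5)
Your proposal is correct and shares the paper's overall skeleton (uniqueness via strict convexity and affinity; existence via Lemma \ref{lem:ui}, Koml\'os' theorem, Ces\`aro means, and Fatou), but it handles the one genuinely delicate step --- the wrong-signed benefit term $-c_2\int e^{-\rho t}d\xi_t$ blocking a direct Fatou argument --- by a different mechanism. The paper changes measure to $\PP$ via \eqref{CoM}, integrates by parts to turn $\int e^{-\rrho_t}(c_1 d\overline{\eta}^n_t - c_2 d\overline{\xi}^n_t)$ into a $dt$-integral, and then bounds the resulting random variable $\Phi_n$ below by a single integrable $\Lambda$ using $h(x)\geq \varepsilon x - \kappa_\varepsilon$ and the full strength of Assumption \ref{ass:c12}. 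You instead establish the expectation identity $\E\int_0^\infty e^{-\rho t}(d\xi_t - d\eta_t) = -x + \E\int_0^\infty e^{-\rho t}(\rho+g-r-\lambda_{Y_t})X^{\varphi}_t\,dt$ and recast $\mathcal{J}_{x,i}$ as a sum of a constant, a running term bounded below via Assumption \ref{ass:h}(iv), and the manifestly nonnegative term $(c_1-c_2)\E\int e^{-\rho t}d\eta_t$; this is arguably more transparent and only invokes $c_1>c_2$ at the Fatou stage (Assumption \ref{ass:c12} still enters through Lemma \ref{lem:ui}). Two points deserve more care than you give them. First, your identity is correct and provable for every $\varphi\in\mathcal{A}(x,i)$, but the It\^o route needs a localisation argument plus the transversality $\E[e^{-\rho T}X^{\varphi}_T]\to 0$; the latter does follow from admissibility, $0\leq X^{\varphi}_T\leq X^{1,i,0}_T[x+\overline{\xi}_T]$ and Assumption \ref{ass:rho} via the conditioning argument of \eqref{ui3}--\eqref{estimate2}, and in fact the whole identity can be derived by that Tonelli/Dellacherie--Meyer computation without It\^o at all. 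Second, the pointwise convergences you invoke in the Fatou step --- $X^{x,i,\widetilde{\varphi}^n}_t\to X^{x,i,\varphi^{\star}}_t$ for $\P\otimes dt$-a.e.\ $(\omega,t)$ and $\int_0^\infty e^{-\rho t}d\widetilde{\eta}^n_t\to\int_0^\infty e^{-\rho t}d\eta^{\star}_t$ --- do not follow from the a.s.\ convergence of the Ces\`aro means alone, because $\overline{\xi}_t$ and the discounted total variation are integrals \emph{against} the random measures $d\widetilde{\xi}^n$, $d\widetilde{\eta}^n$; one needs the Portmanteau-type argument (as in \eqref{Port1}--\eqref{Port3}) for weak convergence of the induced measures. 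These are routine repairs, so your route goes through, but they should be spelled out.
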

\begin{proof}
Uniqueness (up to undistinguishability) of the optimal debt management policy is due, as usual, to the strict convexity of the cost functional and to the affine structure of the controlled state variable with respect to the control. Therefore, in the following we only prove existence of an optimal control. 

Let $(x,i)\in \mathcal{O}$ be given and fixed, and let $(\varphi^n)_{n\in \mathbb{N}}:=(\xi^n, \eta^n)_{n\in \mathbb{N}}$ be a minimising sequence for problem \eqref{eq:valueOC}. By \eqref{eq:ui} in Lemma \ref{lem:ui} we deduce that 
$$\sup_{n \in \mathbb{N}}\E_{(x,i)}\bigg[\int_0^{\infty} e^{-\int_0^t \rho_{Y_s}ds} \big(\eta^n_t + \xi^n_t\big) dt\bigg] < \infty;$$
that is, $(\varphi^n)_{n\in \mathbb{N}}$ is bounded in $L^1(\Omega\times\mathbb{R}_+, \mu)$, where $\mu := \P(d\omega)\otimes e^{-\int_0^t \rho_{Y_s}ds} dt$. 
Koml\'os' theorem \cite{Komlos} thus implies that there exists a subsequence (still denoted by $(\varphi^n)_{n\in \mathbb{N}}$ for simplicity of notation) and a pair of measurable processes $\xi^{\star}$ and $\eta^{\star}$ such that the Ces\`aro sequences
\begin{equation*}
\widetilde{\xi}^n:=\frac{1}{n}\sum_{j=1}^n \xi^j \to \xi^{\star}
\qquad \mbox{and} \qquad \widetilde{\eta}^n:=\frac{1}{n}\sum_{j=1}^n \eta^j \to \eta^{\star} ,
\qquad \mu-\text{a.e.}
\end{equation*}
Hence, setting $\widetilde{\varphi}^n:=\widetilde{\xi}^n-\widetilde{\eta}^n$ and $\varphi^{\star}:=\xi^{\star}-\eta^{\star}$, we get $\widetilde{\varphi}^n \to \varphi^{\star}$, $\mu$-a.e. Arguing as in Lemmata 4.5-4.7 of \cite{KS84} (notice indeed that our a.e.\ convergence implies the weak convergence employed in that paper) one can show that $\xi^{\star}$ and $\eta^{\star}$ admit modifications -- that we still denote by $\xi^{\star}$ and $\eta^{\star}$ -- that are nondecreasing, left-continuous and $\mathbb{F}$-adapted; that is, $\varphi^{\star} \in \mathcal{V}$, and $X^{x,i,\varphi^{\star}}_t\geq 0$, $\P\otimes dt$-a.e.

Moreover, it follows from Portmanteau theorem (see, e.g., Theorem 2.1 in \cite{Billingsley}) that $\P$-a.s.
$$\lim_{n \uparrow \infty}\int_{0}^{\infty} f_u\, {d\widetilde{\xi}^n_u} = \int_{0}^{\infty} f_u\,{d{\xi}^{\star}_u} \quad \mbox{and} \quad \lim_{n \uparrow \infty}\int_{0}^{\infty} f_u\, {d\widetilde{\eta}^n_u} = \int_{0}^{\infty} f_u\,{d{\eta}^{\star}_u},$$
for any bounded function $f:\mathbb{R}_+ \to \mathbb{R}_+$ that is continuous $d{\xi}^{\star}$-a.e.\ (resp., $d{\eta}^{\star}$-a.e.)\ on $\mathbb{R}_+$.
The latter convergence in particular yields
\begin{equation}
\label{Port1}
\lim_{n \uparrow \infty}\int_{0}^{\infty} e^{-\int_0^u \rho_{Y_s}ds} {d\widetilde{\xi}^n_u} = \int_{0}^{\infty} e^{-\int_0^u \rho_{Y_s}ds} {d{\xi}^{\star}_u}
\quad \mbox{and} \quad 
\lim_{n \uparrow \infty}\int_{0}^{\infty} e^{-\int_0^u \rho_{Y_s}ds} {d\widetilde{\eta}^n_u} = \int_{0}^{\infty} e^{-\int_0^u \rho_{Y_s}ds} {d{\eta}^{\star}_u},
\end{equation}
which by Fatou's lemma and \eqref{eq:ui} gives $\E_{(x,i)}[\int_0^{\infty} e^{-\int_0^t \rho_{Y_s}ds} d\eta^{\star}_t + \int_0^{\infty} e^{-\int_0^t \rho_{Y_s}ds} d\xi^{\star}_t] < \infty$, and therefore $\varphi^{\star} \in \mathcal{A}$. Furthermore, we have $\P$-a.s.\ for a.e.\ $t\geq0$ that
\begin{equation}
\label{Port2}
\lim_{n \uparrow \infty}\int_{0}^{\infty}\mathds{1}_{[0,t)}(s)\frac{d\widetilde{\xi}^n_s}{X^{1,i,0}_s} = \int_{0}^{\infty}\mathds{1}_{[0,t)}(s)\frac{d{\xi}^{\star}_s}{X^{1,i,0}_s}= \overline{\xi_t^{\star}} 
\end{equation}
\begin{equation}
\label{Port3}
\lim_{n \uparrow \infty}\int_{0}^{\infty}\mathds{1}_{[0,t)}(s)\frac{d\widetilde{\eta}^n_s}{X^{1,i,0}_s}=\int_{0}^{\infty}\mathds{1}_{[0,t)}(s)\frac{d{\eta}^{\star}_s}{X^{1,i,0}_s}=\overline{\eta_t^{\star}} 
\end{equation}
upon recalling \eqref{controlsbar} to have the last two equalities in \eqref{Port2} and \eqref{Port3}.

If we can now apply Fatou's lemma to $\mathcal{J}_{x,i}(\widetilde{\varphi}^{n})$ from \eqref{eq:J} in view of the limits \eqref{Port1}--\eqref{Port3} and the expressions \eqref{solDTGDP} and \eqref{Xbar} of $X^{x,i,\varphi}$, we obtain that
\begin{equation}
\label{existencefinal}
\mathcal{J}_{x,i}(\varphi^{\star}) \leq \liminf_{n\uparrow \infty}\mathcal{J}_{x,i}(\widetilde{\varphi}^{n}) \leq V(x,i),
\end{equation}
where we have used that $(\widetilde{\varphi}^{n})_{n\mathbb{N}}$ is also a minimising sequence due to the convexity of $\mathcal{J}_{x,i}(\,\cdot\,)$ on $\mathcal{V}$. Hence, $\varphi^{\star}$ is optimal.

Therefore, in order to complete the proof of this part, we show in the remaining that Fatou's lemma can be indeed applied. 
Using the change of measure from \eqref{CoM} on the expression of $\mathcal{J}_{x,i}(\cdot)$ involved in \eqref{reprV}, we can write (see \eqref{rhohat} as well)
\begin{align*}
&\mathcal{J}_{x,i}(\varphi^n) = 
\EE\bigg[\int_0^{\infty} e^{-\int_0^t \rho_{Y_s^i}ds} \frac{1}{M_t} h\big(X^{1,i,0}_t\big[x + \overline{\xi}^n_t  - \overline{\eta}^n_t\big], Y_t^i\big) dt\bigg] 
+ \EE_{(x,i)}\bigg[\int_0^{\infty}  e^{-\rrho_t} 
\big( c_1 d\overline{\eta}^n_t - c_2 d\overline{\xi}^n_t\big)\bigg] \nonumber \\
&= \EE\bigg[\int_0^{\infty}  \hspace{-.15cm}e^{-\int_0^t \rho_{Y_s^i}ds} \frac{1}{M_t} h\big(X^{1,i,0}_t\big[x + \overline{\xi}^n_t  - \overline{\eta}^n_t\big], Y_t^i\big) dt\bigg]  
+ \EE_{(x,i)}\bigg[\int_0^{\infty} \hspace{-.15cm} e^{-\rrho_t} 
\big(\rho_{Y_t} - \lambda_{Y_t} - r + g)\big)\big(c_1 \overline{\eta}^n_t - c_2 \overline{\xi}^n_t\big)dt\bigg], \nonumber
\end{align*}
where an integration by parts for the integrals with respect to $d\overline{\eta}^n_t$ and $d\overline{\xi}^n_t$ and \eqref{eq:ui} have been used to obtain the last equality.
Thus, by defining the random variable
\begin{align*}
\Phi_n := 
\int_0^{\infty}  \hspace{-.15cm}e^{-\int_0^t \rho_{Y_s^i}ds} \frac{1}{M_t} h\big(X^{1,i,0}_t\big[x + \overline{\xi}^n_t  - \overline{\eta}^n_t\big], Y_t^i\big) dt 
+ \int_0^{\infty} \hspace{-.15cm} e^{-\rrho_t} 
\big(\rho_{Y_t^i} - \lambda_{Y_t^i} - r + g)\big)\big(c_1 \overline{\eta}^n_t - c_2 \overline{\xi}^n_t\big)dt
\end{align*}
we will prove that Fatou's lemma can be applied in \eqref{existencefinal}, if we find an integrable random variable $\Lambda$, independent of $n$, such that $\Phi_n\geq \Lambda, \PP$-a.s.
To this end, using that $\lambda_N \leq \lambda_{Y^i_t}\leq \lambda_1$ and $\ul\rho \leq \rho_{Y^i_t}\leq \ol\rho$, $\PP$-a.s., and that for any $\varepsilon > 0$ there exists $\kappa_{\varepsilon}>0$ such that $h(x,i)\geq \varepsilon x - \kappa_{\varepsilon}$ for any $(x,i) \in \mathbb{R}_+ \times \M$ (cf.\ Assumption \ref{ass:h}-(iv)) together with \eqref{XM}, we can write $\PP$-a.s.\ that
\begin{align*}
\Phi_n  \geq 
&-\kappa_{\varepsilon}\int_0^{\infty} e^{-\ul\rho t} \frac{1}{M_t} dt +\frac{\varepsilon x}{\ol\rho - r + g -\lambda_{N}} + 
\big(\varepsilon- c_2(\ol\rho - r + g -\lambda_{N})\big) \int_0^{\infty} e^{-\rrho_t} 
\,\overline{\xi}^n_t dt \nonumber \\
&+ \big(c_1(\ul\rho - r + g -\lambda_{1}) - \varepsilon\big) \int_0^{\infty} e^{-\rrho_t} 
\,\overline{\eta}^n_t dt. 
\end{align*}
Therefore, by taking $\varepsilon = c_1(\ul\rho - r + g - \lambda_{1})$ in the above expression, and using Assumption \ref{ass:c12}, we obtain
\begin{align}
\label{Fatou4}
\Phi_n  >
&- \kappa_{\varepsilon} \int_0^{\infty} e^{-\ul\rho t} \frac{1}{M_t} dt + \frac{\ul\rho - r + g - \lambda_{1}}{\ol\rho - r + g -\lambda_{N}} \, c_1 x =: \Lambda. \nonumber 
\end{align}
The fact that $\Lambda$ is clearly an integrable random variable, independent of $n$, completes the proof.
\end{proof}

The previous theorem ensures existence and uniqueness of an optimal debt ratio management policy, but it does not directly provide its structure. To determine the form of the optimal debt ratio management policy, we now exploit the result of Theorem \ref{thm:existence} and we relate the optimal debt management problem to a two person zero-sum game of optimal stopping with regime switching. 

We now provide a probabilistic representation of $V_x$.
\begin{proposition}
\label{prop:Vx}
For any $(x,i) \in \mathcal{O}$ set
\begin{equation}
\label{stfunct}
\Psi_{x,i}(\tau,\theta):=\E\bigg[\int_{0}^{\tau \wedge \theta}
\hspace{-.18cm}e^{-\int_0^t \rho_{Y_s^i}ds} X^{1,i,0}_t h_x\big(x X^{1,i,0}_t, Y_t^i\big)dt 
+ c_2 e^{-\int_0^\tau \rho_{Y_s^i}ds} X^{1,i,0}_{\tau}\mathds{1}_{\{\tau < \theta\}} + c_1 e^{-\int_0^\theta \rho_{Y_s^i}ds} X^{1,i,0}_{\theta}\mathds{1}_{\{\theta < \tau\}}\bigg],
\end{equation}
for a couple of $\mathbb{F}$-stopping times ($\tau, \theta)$.
Then,  
\begin{equation}
\label{Vx}
V_x(x,i) = v(x,i), \qquad (x,i) \in \mathcal{O},
\end{equation}
where $v$ is the value function of the zero-sum Dynkin game with regime switching
\begin{equation}
\label{valueOS}
v(x,i):=\sup_{\tau \geq 0}\inf_{\theta \geq 0}\Psi_{x,i}(\tau,\theta) = \inf_{\theta \geq 0}\sup_{\tau \geq 0}\Psi_{x,i}(\tau,\theta), \quad (x,i) \in \mathcal{O}.
\end{equation}
\end{proposition}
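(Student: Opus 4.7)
The plan is to reduce the problem to a bounded-variation control problem with affine dependence on the initial datum and the control, and then invoke Theorems 3.1 and 3.2 of \cite{KaratzasWang}, with one extra step to accommodate the positivity constraint built into $\cA(x,i)$.

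First, via the rescaled controls $(\overline{\xi},\overline{\eta})$ of \eqref{controlsbar} and the identity \eqref{Xbar}, one rewrites $V$ as in \eqref{reprV}; in these variables the controlled state $X^{1,i,0}_t[x + \overline{\xi}_t - \overline{\eta}_t]$ is affine both in $x$ and in $\overline{\varphi}=\overline{\xi}-\overline{\eta}$, and the problem falls into the Karatzas--Wang framework. The regime switching enters only through the $\F$-adapted coefficient process $X^{1,i,0}$ and therefore does not interfere with the abstract variational arguments of \cite{KaratzasWang}, provided all stopping times are interpreted as $\F$-stopping times (which is legitimate, since $\F$ is the joint filtration generated by $W$ and $Y$).

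Second, by Proposition \ref{prop:preliminaryV} the map $x \mapsto V(x,i)$ is convex, and by Theorem \ref{thm:existence} an optimal control $\varphi^{\star}$ exists. Formally differentiating the running-cost integrand in \eqref{reprV} with respect to $x$ produces $X^{1,i,0}_t\,h'(X^{1,i,0}_t[x+\overline{\varphi}_t])$, which is precisely the running payoff in \eqref{stfunct}; the linear intervention terms $\pm c_j X^{1,i,0}$ drop out of the differentiation but reappear in the Dynkin game as the marginal rewards exchanged between the two players (the sup-player collecting $c_2 X^{1,i,0}_\tau e^{-\rho\tau}$ upon stopping, and the inf-player paying $c_1 X^{1,i,0}_\theta e^{-\rho\theta}$). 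Applying Theorem 3.1 of \cite{KaratzasWang} to right/left increments of $V$ at $x$ yields the inequalities $V_x(x,i)\leq \inf_{\theta}\sup_{\tau}\Psi_{x,i}(\tau,\theta)$ and $V_x(x,i)\geq \sup_{\tau}\inf_{\theta}\Psi_{x,i}(\tau,\theta)$; Theorem 3.2, combined with the existence of $\varphi^{\star}$, supplies the reverse inequalities via the construction of dual saddle stopping times built from the ``rising'' and ``falling'' supports of $\varphi^{\star}$. The two values then coincide, giving both \eqref{Vx} and \eqref{valueOS}.

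The main care is needed for the positivity constraint, absent in \cite{KaratzasWang}. The Karatzas--Wang proof of Theorem 3.2 perturbs $\varphi^{\star}$ by pulses of the form $\pm\varepsilon\mathds{1}_{[\tau,\infty)}$ and $\pm\varepsilon\mathds{1}_{[\theta,\infty)}$ for small $\varepsilon>0$. Positive pulses added to $\xi$ (or subtracted from $\eta$) trivially preserve $X^{x,i,\varphi}\geq 0$ since they move the state upward. Negative pulses can be implemented on the random set $\{t\,:\,X^{x,i,\varphi^{\star}}_t\geq \delta\}$ for $\delta>0$ without violating admissibility, and the resulting inequalities are then passed to the limit $\delta\downarrow 0$ via dominated convergence, using the integrability bound \eqref{eq:ui} and Assumption \ref{ass:h}-(iii) to dominate the perturbed cost functional. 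This positivity verification is the only genuinely new ingredient; the remainder is a direct transcription of the arguments in \cite{KaratzasWang}, and is therefore the step I expect to require the most care in a fully written proof.
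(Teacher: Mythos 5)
Your proposal follows essentially the same route as the paper: cast the problem in the Karatzas--Wang framework via the rescaled controls, verify the standing hypotheses, invoke their Theorems 3.1 and 3.2 together with the existence result of Theorem \ref{thm:existence}, and check that the perturbations of the optimal control used in their proof respect the positivity constraint on the state. The paper is terser on the positivity point (it simply asserts the perturbations preserve it) but additionally records two checks you leave implicit --- the integrability conditions $\E[\sup_t|\gamma_t|+\sup_t|\nu_t|]<\infty$ and $\E[\int_0^\infty e^{-\rho t}|H_x|\,dt]<\infty$, and the fact that the Karatzas--Wang arguments survive the process $\gamma_t=-c_2e^{-\rho t}X^{1,i,0}_t$ being \emph{negative} rather than nonnegative as assumed there --- so you should add those verifications, but no new idea is needed.
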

\begin{proof}
For any $(x,i)\in \mathcal{O}$, $t\geq 0$ and $\omega \in \Omega$, recall \eqref{solGBM} and set
\begin{align}
\label{set}
&H(\omega,t,x):= e^{-\int_0^t \rho_{Y_s^i(\omega)}ds} 
h\big(x \cdot X^{1,i,0}_t(\omega),  Y_t^i(\omega)\big), \nonumber\\ 
&\nu_t(\omega):= c_1 e^{-\int_0^t \rho_{Y_s^i(\omega)}ds} 
X^{1,i,0}_t(\omega), 
\quad 
\gamma_t(\omega):= - c_2 e^{-\int_0^t \rho_{Y_s^i(\omega)}ds} X^{1,i,0}_t(\omega).
\end{align}
Due to Assumptions \ref{ass:rho} and \ref{ass:h}, and standard estimates, it is easy to check that
\begin{equation*}
{(i)}\,\,\,\E\Big[\sup_{t\geq 0}|\gamma_t| + \sup_{t\geq 0}|\nu_t|\Big] < \infty, \qquad {(ii)}\,\,\,\E\bigg[\int_0^{\infty}|H_x(\omega,t,x)| dt\bigg] < \infty.
\end{equation*}
We thus have that the integrability conditions required in equation (2.4) of \cite{KaratzasWang} are satisfied, and we can therefore apply Theorems 3.1 and 3.2 of \cite{KaratzasWang} together with our Theorem \ref{thm:existence} in order to conclude. In fact, going through the proofs of Theorems 3.1 and 3.2 of \cite{KaratzasWang}, one should notice that the required nonnegativity of the process $\gamma$ is not necessary. The arguments of those proofs still work in the case (as in the present paper) in which $\gamma$ is negative (cf.\ \eqref{set}) and $\E\big[\sup_{t\geq 0}|\gamma_t|\big] < \infty$.
Moreover, it is important to remark that in \cite{KaratzasWang} the set of admissible controls does not require that the controlled process remains positive. However, the proof of Theorem 3.2 therein is based on the construction of suitable perturbations of the optimal control and one may easily verify that such perturbations of the optimal control preserve positivity of the process provided that the optimal control does.
\end{proof}

This game might be interpreted as a game played between the two components of the government; namely, player 1 (inf--player choosing $\theta$) represents the will to adopt a restrictive debt policy and player 2  (sup--player choosing $\tau$) represents the desire to increase spending.

\section{The Associated Optimal Stopping Game}
\label{sec:OSgame}

In this section we will study the Dynkin game with regime switching with value \eqref{valueOS}. In particular, we will characterise the saddle point of the game as a couple of hitting times of two regime dependent boundaries, and we will prove global $C^1$-regularity of $v(\cdot, i)$ for any $i\in \mathcal{M}$. This study will be crucial for the identification of the optimal control of problem \eqref{eq:valueOC}, completely characterising the optimal debt management policy of the government, developed in Section \ref{sec:OCrule}.

For the subsequent analysis, we define the process
\begin{equation}
\label{rhohat}
\rrho_t:=\int_0^t (\rho_{Y_s} - \lambda_{Y_s}) ds - (r - g)t , \quad t\geq 0,
\end{equation}
and let $\PP$ be the measure on $(\Omega, \mathcal{F})$ such that 
\begin{equation}
\label{CoM}
\frac{d\PP}{d\P}\Big|_{\mathcal{F}_t} = M_t \, , \quad \text{for } t \geq 0, \qquad \text{with} \qquad M_t:=\exp\Big\{-\frac{1}{2}\sigma^2 t + \sigma W_t \Big\}, 
\end{equation}
and denote by $\EE_{(x,i)}$ the expectation under $\PP$ conditioned on $X_0=x$ and $Y_0=i$, for $(x,i) \in \mathcal{O}$. 
Notice that for any $t\geq 0$, we can rewrite $X^{x,i,0}$ from \eqref{solGBM} as
\begin{equation}
\label{XM}
X^{x,i,0}_t=x\exp\Big\{(r-g)t + \int_0^t \lambda_{Y^i_s}\, ds \Big\} \, M_t 
= x \exp\Big\{ \int_{0}^{t} \rho_{Y_s^i} ds - \rrho_t \Big\} \, M_t.
\end{equation}
In view of the change of measure in \eqref{CoM}, we have by Girsanov's theorem that $\WW_t:= W_t - \sigma t$ is a standard $\mathbb{F}$-Brownian motion under $\PP$, and we introduce the process (cf.\ \eqref{solGBM})
\begin{equation}
\label{newGBM}
\displaystyle {\X}^{x,i,0}_t = x e^{(r - g + \frac{1}{2}\sigma^2)t + \int_0^t \lambda_{Y^i_s}ds + \sigma \WW_t}, \qquad t\geq 0.
\end{equation}
Moreover, we can rewrite (cf.\ \eqref{valueOS})
\begin{equation}
\label{valueOS2}
v(x,i)=\sup_{\tau \geq 0}\inf_{\theta \geq 0}\PPsi_{x,i}(\tau,\theta) = \inf_{\theta \geq 0}\sup_{\tau \geq 0}\PPsi_{x,i}(\tau,\theta), \quad (x,i) \in \mathcal{O},
\end{equation}
where for every couple of $\mathbb{F}$-stopping times $(\tau,\theta)$ we have set
\begin{equation}
\label{stfunct-bis}
\PPsi_{x,i}(\tau,\theta):={\EE}_{(x,i)}\bigg[\int_{0}^{\tau \wedge \theta}e^{-\rrho_t}h_x\big({\X}^0_t, Y_t \big)dt + c_2 e^{-\rrho_\tau}\mathds{1}_{\{\tau < \theta\}} + c_1 e^{-\rrho_\theta}\mathds{1}_{\{\theta < \tau\}}\bigg].
\end{equation}
with $\rrho_\cdot$ given by \eqref{rhohat}.

It is easy to see that since $\ul\rho > r - g + \lambda_1$ by Assumption \ref{ass:rho}, then $\lim_{t \uparrow \infty}e^{-\rrho_t}=0$, $\PP$-a.s.. Therefore, in the rest of this section, for any $\mathbb{F}$-stopping time $\zeta$ we will adopt the convention
$$ e^{-\rrho_\zeta} := 0 \quad \mbox{on} \quad \{\zeta=+\infty\}.$$

From \eqref{valueOS2}--\eqref{stfunct-bis} it is readily seen that $c_2 \leq v(x,i) \leq c_1$. Using the general theory of optimal stopping for Markov processes (see, e.g., Chapter 2 of \cite{PeskirShir}) define the \emph{continuation region}
$$\mathcal{C}:=\{(x,i)\in \mathcal{O}:\, c_2 < v(x,i) < c_1\},$$
and the \emph{stopping regions}
$$\mathcal{S}_1:=\{(x,i)\in \mathcal{O}:\,v(x,i) \geq c_1\}, \qquad \mbox{and} \qquad \mathcal{S}_2:=\{(x,i)\in \mathcal{O}:\,v(x,i) \leq c_2\}.$$

Here $\mathcal{C}$ is the region in which no player has an incentive to stop the evolution of the process $(\X^0,Y)$, whereas $S_j$, $j=1,2$, is the region in which it is optimal for player $j$ to stop.

Since $x \mapsto {\X}^{x,i,0}_{t}$ is $\PP$-a.s.\ increasing (cf.\ \eqref{newGBM}), it follows from \eqref{valueOS2} that $x \mapsto v(x,i)$ is increasing for any $i\in \mathcal{M}$ due to the convexity of $h(\cdot,i)$.
Hence we can introduce the \emph{free boundaries}
\begin{equation}
\label{bds}
a(i):=\inf\{x \geq 0: v(x,i) > c_2\} \quad \text{and} \quad b(i):=\sup\{x \geq 0: v(x,i) < c_1\},
\end{equation}
(with the usual convention $\sup\emptyset=0$ and $\inf \emptyset = + \infty$), and we have that $\mathcal{O}= \R_+ \times \mathcal{M}$ is split into continuation and stopping regions completely determined by $a$ and $b$; that is,
$$\mathcal{C}=\{(x,i)\in \mathcal{O}:\, a(i) < x < b(i)\},\quad \mathcal{S}_1=\{(x,i)\in \mathcal{O}:\, x \geq b(i)\}, \quad\mathcal{S}_2=\{(x,i)\in \mathcal{O}:\, x \leq a(i)\}.$$

The Markov process $({\X}^0,Y)$ has c\`adl\`ag paths and it is of Feller type by \cite{Zhou-Yin} (see Lemma 3.6 and Theorem 3.10 therein). Hence its paths are right-continuous and quasi-left-continuous (i.e.\ left-continuous over predictable stopping times), and by Theorem 2.1 of \cite{EkstromPeskir} we know that $\PP_{(x,i)}$-a.s., for any $(x,i) \in \mathcal{O}$, the two stopping times
\begin{equation}
\label{Nashst}
\theta^{\star}:=\inf\{t\geq 0:\, ({\X}^0_t,Y_t) \in \mathcal{S}_1\}\quad \mbox{and} \quad \tau^{\star}:=\inf\{t\geq 0:\, ({\X}^0_t,Y_t) \in \mathcal{S}_2\},
\end{equation}
form a saddle point for the game \eqref{valueOS2} (here the usual convention $\inf \emptyset = + \infty$ applies). Moreover, by easily adapting the results of Theorem 2.1 in \cite{PeskirNash} to our case with running cost $h_x$, we also have the following probabilistic characterisation of $v$. Such a result is usually referred to as the \emph{semi-harmonic characterisation of $v$}. 

\begin{proposition}
\label{prop:semih}
For any $(x,i) \in \mathcal{O}$, we have under $\PP_{(x,i)}$ that
\begin{itemize}
\item[(i)]
$\big(\int_0^{t\wedge \tau^{\star}}e^{-\rrho_s}h_x\big({\X}^0_s, , Y_s \big) ds + e^{-\rrho_{t\wedge \tau^{\star}}}v({\X}^0_{t\wedge \tau^{\star}},Y_{t\wedge \tau^{\star}})\big)_{t\geq0}$  
is a right-continuous $\mathbb{F}$-submartingale;
\item[(ii)]
$\big(\int_0^{t\wedge \theta^{\star}}e^{-\rrho_s}h_x\big({\X}^0_s, , Y_s\big) ds + e^{-\rrho_{t\wedge \theta^{\star}}}v({\X}^0_{t\wedge \theta^{\star}},Y_{t\wedge \theta^{\star}})\big)_{t\geq0}$
is a right-continuous $\mathbb{F}$-supermartingale;
\item[(iii)]
$\big(\int_0^{t\wedge \theta^{\star}\wedge \tau^{\star}}e^{-\rrho_s}h_x\big({\X}^0_s, Y_s\big) ds + e^{-\rrho_{t\wedge \theta^{\star}\wedge \tau^{\star}}}v({\X}^0_{t\wedge \theta^{\star}\wedge \tau^{\star}},Y_{t\wedge \theta^{\star}\wedge \tau^{\star}})\big)_{t\geq0}$
is a right-continuous $\mathbb{F}$-martingale;
\end{itemize}
\end{proposition}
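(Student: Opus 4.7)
The statement is the semi-harmonic characterisation of the saddle point of the regime-switching Dynkin game \eqref{valueOS2}. My plan is to adapt Peskir's argument from Theorem 2.1 of \cite{PeskirNash} in a routine way, relying on two ingredients already at hand: the Feller property of the pair $(\X^0,Y)$ established in \cite{Zhou-Yin}, which furnishes the strong Markov property together with right-continuous and quasi-left-continuous paths; and the saddle-point identity $v(x,i)=\PPsi_{x,i}(\tau^{\star},\theta^{\star})$ obtained via \cite{EkstromPeskir}.

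The core step is to establish, via the strong Markov property applied at an arbitrary $\mathbb{F}$-stopping time $\sigma$ satisfying $\sigma\leq\tau^{\star}\wedge\theta^{\star}$, the Bellman-type identity
\begin{equation*}
e^{-\rrho_{\sigma}}v\bigl(\X^0_{\sigma},Y_{\sigma}\bigr) = \EE_{\sigma}\bigg[\int_{\sigma}^{\tau^{\star}\wedge\theta^{\star}}e^{-\rrho_u}h'(\X^0_u)\,du + c_2\, e^{-\rrho_{\tau^{\star}}}\mathds{1}_{\{\tau^{\star}<\theta^{\star}\}} + c_1\, e^{-\rrho_{\theta^{\star}}}\mathds{1}_{\{\theta^{\star}<\tau^{\star}\}}\bigg],
\end{equation*}
which holds because the restarted hitting times of $\mathcal{S}_1$ and $\mathcal{S}_2$ from time $\sigma$ coincide with $\theta^{\star}$ and $\tau^{\star}$ on the event $\{\sigma\leq\tau^{\star}\wedge\theta^{\star}\}$, and the value at $(\X^0_{\sigma},Y_{\sigma})$ equals the saddle payoff by strong Markov. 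Specialising the identity to $\sigma\in\{s\wedge\tau^{\star}\wedge\theta^{\star},\,t\wedge\tau^{\star}\wedge\theta^{\star}\}$ for $s\leq t$ and invoking the tower property yields part (iii) immediately.

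For parts (i) and (ii) I would exploit the one-sided saddle inequalities $\PPsi_{x,i}(\tau,\theta^{\star})\leq v(x,i)\leq \PPsi_{x,i}(\tau^{\star},\theta)$, valid for every pair of $\mathbb{F}$-stopping times $(\tau,\theta)$. For (i), applying the upper bound with the postponed stopping rule $\theta:=\theta^{\star}\vee t$ and strong Markov at $s\wedge\tau^{\star}$ produces on $\{s<\tau^{\star}\}$ the inequality $e^{-\rrho_s}v(\X^0_s,Y_s)\leq \EE_s[\cdots]$, whose right-hand side is precisely the conditional expectation at time $s$ of the process in (i) stopped at $t\wedge\tau^{\star}$; on $\{s\geq\tau^{\star}\}$ that process is constant. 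This is exactly the required submartingale inequality. Part (ii) is symmetric: apply the lower bound with $\tau:=\tau^{\star}\vee t$ to obtain the supermartingale inequality on $\{s<\theta^{\star}\}$.

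The main technical obstacles will be (a) justifying the integrability needed for each conditional expectation, since $h'$ is only polynomially controlled by Assumption \ref{ass:h}-(iii) and the discount $\rrho$ is stochastic; however, Assumption \ref{ass:rho} together with the moment bounds in \eqref{check} and the representation \eqref{XM} suffice, and (b) confirming right-continuity of each of the three stopped processes, which is inherited from the right-continuity of the paths of $(\X^0,Y)$, the continuity of $t\mapsto\rrho_t$, and the right-continuity of $v(\cdot,i)$ (a consequence of the convexity of $V(\cdot,i)$ in Proposition \ref{prop:preliminaryV} via the relation $v=V_x$). The substantive input beyond the classical one-dimensional setting of \cite{PeskirNash} is concentrated in the strong Markov step for the Bellman identity, where the Feller property of the joint process $(\X^0,Y)$ — supplied by \cite{Zhou-Yin} — is precisely what makes the adaptation go through.
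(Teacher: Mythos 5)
Your proposal is correct and follows essentially the same route as the paper, which offers no written proof of Proposition \ref{prop:semih} but simply invokes the Feller property and quasi-left-continuity of $(\X^0,Y)$ from \cite{Zhou-Yin}, the saddle point $(\tau^{\star},\theta^{\star})$ from Theorem 2.1 of \cite{EkstromPeskir}, and an ``easy adaptation'' of Theorem 2.1 of \cite{PeskirNash} to the setting with running cost $h'$. Your sketch merely spells out the content of that adaptation (the strong-Markov Bellman identity for part (iii) and the one-sided saddle inequalities for parts (i)--(ii)), together with the relevant integrability and right-continuity checks, so it is consistent with the paper's intended argument.
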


The following proposition rules out the possibility that the stopping regions are empty, thus the boundaries $a(i)$ and $b(i)$ from \eqref{bds} exist and are finite under any regime $i \in \mathcal{M}$, and the optimal stopping times in \eqref{Nashst}, forming the Nash-equilibrium, are well-defined.

\begin{proposition}
\label{notemptyS1}
The following hold true:
\begin{itemize}
\item[(i)] $\mathcal{S}_1 \neq \emptyset$ and $\mathcal{S}_2 \neq \emptyset$; 
\item[(ii)] there exist constants $0<a_1<b_1<+\infty$ and $0<a_N<b_N<+\infty$, with $a_1 \leq a_N$ and $b_1 \leq b_N$, such that for all $i\in \mathcal{M}$ we have $a_1 \leq a(i) \leq a_N$ and $b_1 \leq b(i) \leq b_N$. 
\end{itemize}
\end{proposition}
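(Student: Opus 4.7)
The plan is to sandwich $v(\cdot,i)$ between the values of two classical (no-switching) Dynkin games with the extremal drifts $\lambda_1$ and $\lambda_N$, and then transfer the non-degeneracy of the stopping regions of those two games to the present setting. By Assumption~\ref{lY} and the common Brownian motion $\widehat{W}$ in \eqref{newGBM}, I have the pathwise bracket
\[
\widehat{X}^{(\lambda_N),x}_s \leq \widehat{X}^{x,i,0}_s \leq \widehat{X}^{(\lambda_1),x}_s, \qquad (\rho-r+g-\lambda_1)s \leq \rrho_s \leq (\rho-r+g-\lambda_N)s,
\]
where I set $\widehat{X}^{(\lambda),x}_s := x\exp\{(r-g+\tfrac{1}{2}\sigma^2+\lambda)s+\sigma \widehat{W}_s\}$ for the GBM of the constant-drift model. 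Since $h'\geq 0$ is increasing on $\R_+$ (Assumption~\ref{ass:h}), inserting these pathwise bounds into $\widehat{\Psi}_{x,i}(\tau,\theta)$ in \eqref{stfunct-bis} and then passing to $\sup_\tau \inf_\theta$ will deliver the sandwich
\[
v^{\lambda_N}(x) \leq v(x,i) \leq v^{\lambda_1}(x), \qquad (x,i)\in \mathcal{O},
\]
where $v^\lambda$ denotes the value of the classical Dynkin game with constant drift $\lambda$. Specialising the same argument to two constant regimes shows that $\lambda\mapsto v^\lambda$ is pointwise nondecreasing.

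The second step is to establish the non-degeneracy $0<a^\lambda<b^\lambda<+\infty$ of the classical boundaries (defined as in \eqref{bds}) for $\lambda\in\{\lambda_1,\lambda_N\}$. For $b^\lambda<+\infty$, I would argue by contradiction: if $v^\lambda(x)<c_1$ for every $x>0$, then $\theta^{\lambda,*}\equiv+\infty$ and the classical analogue of Proposition~\ref{prop:semih}(ii), combined with Fatou's lemma as $t\to+\infty$ (exploiting $v^\lambda\leq c_1$ and $e^{-(\rho-r+g-\lambda)t}\to0$), forces
\[
v^\lambda(x) \geq \widehat{\E}\Big[\int_0^\infty e^{-(\rho-r+g-\lambda)s}\, h'(\widehat{X}^{(\lambda),x}_s)\,ds\Big],
\]
and this lower bound diverges as $x\to+\infty$ by monotone convergence and $h'(+\infty)=+\infty$ (Assumption~\ref{ass:h}(ii)); this contradicts $v^\lambda\leq c_1$. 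For $a^\lambda>0$, I would use the minimax upper bound $v^\lambda(x)\leq \sup_\tau \widehat{\Psi}^\lambda(\tau,\tau_M)$ with the threshold $\tau_M:=\inf\{s:\widehat{X}^{(\lambda),x}_s\geq M\}$. Because $c_1>c_2$, any $\tau<\tau_M$ is pointwise dominated by waiting until $\tau_M$, so
\[
v^\lambda(x) \leq \frac{h'(M)}{\rho-r+g-\lambda} + c_1\,\widehat{\E}\!\left[e^{-(\rho-r+g-\lambda)\tau_M}\right].
\]
Choosing $M$ so that the first term is $<c_2/2$, and then $x$ small enough that the second term is $<c_2/2$ (using the vanishing of GBM exit-time Laplace transforms as $x\to 0^+$) yields $v^\lambda(x)<c_2$, hence $a^\lambda>0$. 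The strict inequality $a^\lambda<b^\lambda$ then follows from the continuity of $v^\lambda$ together with $c_2<c_1$.

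Setting $a_1:=a^{\lambda_1}$, $b_1:=b^{\lambda_1}$, $a_N:=a^{\lambda_N}$, $b_N:=b^{\lambda_N}$, the sandwich of Step~1 immediately yields $a_1\leq a(i)\leq a_N$ and $b_1\leq b(i)\leq b_N$ uniformly in $i\in\mathcal{M}$: for $x\leq a_1$ one has $v(x,i)\leq v^{\lambda_1}(x)\leq c_2$, whereas for $x>a_N$ one has $v(x,i)\geq v^{\lambda_N}(x)>c_2$ (and symmetrically for $b$). The monotonicity $\lambda\mapsto v^\lambda$ combined with $\lambda_1\geq\lambda_N$ (Assumption~\ref{lY}) gives $a_1\leq a_N$ and $b_1\leq b_N$, concluding both (i) and (ii). I expect the main obstacle to lie in Step~2: while the underlying ideas (semi-harmonic characterisation plus threshold strategies) are standard for GBM-driven Dynkin games, obtaining clean, verifiable bounds at the extremes $x\to 0^+$ and $x\to+\infty$ requires careful combination of the Lipschitz/growth estimates from Assumption~\ref{ass:h}(iii) with dominated/monotone convergence.
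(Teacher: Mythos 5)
Your overall strategy is the same as the paper's: part (ii) is obtained there precisely by sandwiching $v(\cdot,i)$ between the values $v_N\leq v(\cdot,i)\leq v_1$ of the two constant-drift Dynkin games \eqref{upperv} and transferring the non-degeneracy of their boundaries, and your divergence argument for $b^{\lambda}<+\infty$ is essentially the paper's own proof that $\mathcal{S}_1\neq\emptyset$. The genuine problem lies in your Step~2 argument for $a^{\lambda}>0$. First, the target ``$v^{\lambda}(x)<c_2$'' is unattainable: the maximiser can always stop at once, so $c_2\leq v^{\lambda}\leq c_1$ everywhere (the paper records this right after \eqref{stfunct-bis}); what must be shown is $v^{\lambda}(x)=c_2$ on a right neighbourhood of $0$. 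Second, the domination claim you use to discard the term $c_2e^{-(\rho-r+g-\lambda)\tau}\mathds{1}_{\{\tau<\tau_M\}}$ is false: stopping at $\tau$ pays $c_2e^{-(\rho-r+g-\lambda)\tau}$, while waiting until $\tau_M$ pays (up to the small running term) $c_1\,\EE\big[e^{-(\rho-r+g-\lambda)\tau_M}\,\big|\,\mathcal{F}_\tau\big]=c_1\big(\X^{(\lambda),x}_\tau/M\big)^{\kappa}e^{-(\rho-r+g-\lambda)\tau}$ for a positive exponent $\kappa$, which is \emph{smaller} than the immediate payoff exactly when $\X^{(\lambda),x}_\tau\ll M$ --- i.e.\ in the regime $x\to0^+$ that you need. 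Keeping the $c_2$-term instead, the threshold strategy only yields $v^{\lambda}(x)\leq c_2+(c_1-c_2)(x/M)^{\kappa}$, which gives $v^{\lambda}(0+)=c_2$ but not $a^{\lambda}>0$.

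The step can be repaired by mirroring your $b^{\lambda}<+\infty$ argument as a contradiction: suppose $a^{\lambda}=0$; then, by the monotonicity of $v^{\lambda}$, one has $v^{\lambda}(x)>c_2$ for every $x>0$, so the maximiser's stopping region does not meet $(0,\infty)$ and $\tau^{\lambda,\star}=\inf\{t:\X^{(\lambda),x}_t\leq 0\}=+\infty$ a.s.\ (geometric Brownian motion never reaches $0$). The $c_2$-payoff then disappears from the saddle-point representation, leaving $v^{\lambda}(x)=\inf_{\theta}\EE\big[\int_0^{\theta}e^{-(\rho-r+g-\lambda)t}h'(\X^{(\lambda),x}_t)\,dt+c_1e^{-(\rho-r+g-\lambda)\theta}\big]$, and now your bound with $\theta=\tau_M$ is legitimate: choosing $M$ with $h'(M)<c_2(\rho-r+g-\lambda)$ (possible since $h'(0)=0$) and then $x$ small forces $v^{\lambda}(x)<c_2$, contradicting $v^{\lambda}\geq c_2$. (The paper itself defers this non-degeneracy to ``standard means'' and the explicit one-regime free-boundary solution of Section \ref{sec:comparison}.) Note finally that the paper's proof of $\mathcal{S}_2\neq\emptyset$ is the one-line observation that $v(0,i)=c_2$ because the process started at $0$ stays at $0$ and $h'(0)=0$; your route, once repaired, recovers the stronger statement that $(0,a_1]$ is contained in every $i$-section of $\mathcal{S}_2$.
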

\begin{proof}
We prove the two claims separately.
\vspace{0.15cm}

\emph{Proof of (i)}. We argue by contradiction and we suppose that $\mathcal{S}_1 = \emptyset$. This implies that $\theta^{\star}=+\infty$ $\PP_{(x,i)}$-a.s.\ for any $(x,i) \in \mathcal{O}$ and therefore
\begin{align*}
c_1 > v(x,i) &= \displaystyle \sup_{\tau \geq 0}{\EE}_{(x,i)}\bigg[\int_{0}^{\tau}e^{-\rrho_t}h_x\big({\X}^0_t, Y_t\big)dt + c_2 e^{-\rrho_\tau}\bigg] 
\geq \displaystyle {\EE}\bigg[\int_{0}^{T}e^{-\rrho_t}h_x\big(x \cdot \X^{1,i,0}_t, Y_t\big)dt + c_2 e^{-\rrho_T}\bigg], 
\end{align*}
for $T>0$ deterministic. By letting $x \uparrow \infty$, and recalling that $h_x(x,i) \uparrow \infty$ by Assumption \ref{ass:h}, we obtain by the monotone convergence theorem that the last expected value diverges to $+\infty$, thus leading to a contradiction. 

Given that we allow the process ${\X}^{0}$ to start from $x=0$ at time $t=0$, in which case ${\X}^{0}_t\equiv 0$ for all $t\geq0$, $\PP$-a.s., and $h_x(0,i)=0$ by Assumption \ref{ass:h}-(ii), we clearly have that $v(0,i)=c_2$ for any $i \in \mathcal{M}$.  That is, the minimiser chooses $\theta^\star=+\infty$ and the maximiser $\tau^\star=0$ in \eqref{Nashst}. Thus, $(0,i) \in \mathcal{S}_2$ for any $i\in \mathcal{M}$, which yields that the stopping set $\mathcal{S}_2 \neq \emptyset$.
\vspace{0.15cm}

\emph{Proof of (ii)}. Since $\lambda_{Y_t} \in [\lambda_N,\lambda_1]$ and $\rho_{Y_t} \in [\ul\rho, \ol\rho]$, $\P$-a.s.\ for all $t\geq0$ (see Assumption \ref{lY}), it is straightforward to see that $v_N(x) \leq v(x,i) \leq v_1(x)$, for all $x\geq0$, $i\in \mathcal{M}$. The bounds $v_k(x)$, for $k\in\{1,N\}$, are defined by
\begin{align}
\label{upperv}
&v_k(x):=\sup_{\tau\geq0}\inf_{\theta\geq0}\Xi^{\footnotesize{(k)}}_x(\tau,\theta)= \inf_{\theta\geq0}\sup_{\tau\geq0}\Xi^{\footnotesize{(k)}}_x(\tau,\theta), 
\end{align}
with
\begin{align*}
&\Xi^{\footnotesize{(1)}}_x(\tau,\theta):={\EE}\bigg[\int_{0}^{\tau \wedge \theta}\hspace{-0.25cm}e^{-(\ul\rho - \lambda_1 -r + g)t} \max_i h_x\big(Z^{(1),x}_t, i\big)dt 
+ c_2 e^{-(\ul\rho - \lambda_1 -r + g)\tau}\mathds{1}_{\{\tau < \theta\}} + c_1 e^{-(\ul\rho - \lambda_1 -r + g)\theta}\mathds{1}_{\{\theta < \tau\}}\bigg] \\
&\Xi^{\footnotesize{(N)}}_x(\tau,\theta):=\EE\bigg[\int_{0}^{\tau \wedge \theta}\hspace{-0.25cm}e^{-(\ol\rho - \lambda_N -r + g)t} \min_i h_x\big(Z^{(N),x}_t, i\big)dt 
+ c_2 e^{-(\ol\rho - \lambda_N -r + g)\tau}\mathds{1}_{\{\tau < \theta\}} + c_1 e^{-(\ol\rho - \lambda_N -r + g)\theta}\mathds{1}_{\{\theta < \tau\}}\bigg]
\end{align*}
for $Z^{\footnotesize{(k)},x}_t= x \exp \{(r-g + \frac{1}{2}\sigma^2 + \lambda_k)t + \sigma\widetilde{W}_t \}$, for all $t\geq 0$. By defining the free boundaries of the one-dimensional (without regime switching) zero-sum optimal stopping games \eqref{upperv}, for any $k\in\{1,N\}$, by
$$a_k:=\inf\{x \geq 0: v_k(x) > c_2\} \quad \text{and} \quad b_k:=\sup\{x \geq 0: v_k(x) < c_1\},$$
we apply standard means to prove that these constants exist and are such that $0<a_k<b_k<+\infty$ (compare also with our analysis of Section \ref{sec:comparison}). Moreover, $a_1 \leq a_N$ and $b_1 \leq b_N$. Thus, using the fact that $v_N(x) \leq v(x,i) \leq v_1(x)$, it is easy to see that $a_1 \leq a(i) \leq a_N$ and $b_1 \leq b(i) \leq b_N$, which completes the proof.
\end{proof}

For any $i \in \mathcal{M}$ introduce the $i$-sections for $\mathcal{C}$, $\mathcal{S}_1$ and $\mathcal{S}_1$ as
\begin{equation*}
\mathcal{C}^i:=\{x \geq 0:\, (x,i) \in \mathcal{C}\} \quad \mbox{and} \quad \mathcal{S}_j^i:=\{x \geq 0:\, (x,i) \in \mathcal{S}_j\}, \quad \mbox{for} \quad j=1,2.
\end{equation*}
The next result proves regularity of $x \mapsto v(x,i)$ for any $i \in \mathcal{M}$.
\begin{theorem}
\label{thm:reg-v}
For any $i \in \mathcal{M}$, 
\begin{itemize}
\item[(i)] 
$v(\cdot,i) \in C^{2}\big((\mathcal{C}^i\cup \mathcal{S}_1^i \cup \mathcal{S}_2^i) \setminus \{a(i),b(i)\}$\big); 
\item[(ii)] 
$v(\cdot,i) \in C^1(\mathbb{R}_+)$. 
\end{itemize}
\end{theorem}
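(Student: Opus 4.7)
My plan is to exploit the semi-harmonic characterisation in Proposition~\ref{prop:semih} together with the Feller property of $(\X^{0}, Y)$ established via \cite{Zhou-Yin}. Part (i) on the interiors of the stopping regions is immediate: since $v(\cdot, i) \equiv c_1$ on $(b(i), \infty)$ and $v(\cdot, i) \equiv c_2$ on $[0, a(i))$, the function is trivially $C^\infty$ there. The substance of (i) is therefore to establish $v(\cdot, i) \in C^2(\mathcal{C}^i)$ with $\mathcal{C}^i = (a(i), b(i))$.

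To this end, I would first record that $v(\cdot, j)$ is continuous on $\R_+$ for every $j \in \mathcal{M}$---a standard consequence of the general theory of optimal stopping and Dynkin games under Feller-type regularity (cf.\ \cite{PeskirShir,EkstromPeskir}), combined with the continuity of $h'$. Fix $x_0 \in \mathcal{C}^i$ and choose $\eps > 0$ with $[x_0 - \eps, x_0 + \eps] \subset (a(i), b(i))$. Let $\mu_\eps$ denote the first exit time of $\X^{0}$ from this interval, $\tau_Y$ the first jump time of $Y$, and $\sigma := \mu_\eps \wedge \tau_Y$. Under $\PP_{(x_0,i)}$ one has $\sigma \leq \theta^{\star} \wedge \tau^{\star}$, since $Y_t = i$ on $[0, \tau_Y)$ and $\X^{0}_t \in \mathcal{C}^i$ on $[0, \mu_\eps)$. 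Optional sampling applied to the $\mathbb{F}$-martingale of Proposition~\ref{prop:semih}(iii) at $\sigma$ then yields
\begin{equation*}
v(x_0, i) = \EE_{(x_0, i)}\!\left[\int_0^{\sigma} e^{-\rrho_s} h'(\X^{0}_s)\, ds + e^{-\rrho_{\sigma}} v(\X^{0}_{\sigma}, Y_{\sigma})\right].
\end{equation*}
Decomposing this expectation on $\{\tau_Y < \mu_\eps\}$ and $\{\tau_Y \geq \mu_\eps\}$, and using that $\tau_Y$ is exponentially distributed with parameter $-q_{ii}$ and independent of $\X^{0}$, one recognises $v(\cdot, i)$ as the unique classical solution on $(x_0 - \eps, x_0 + \eps)$ of the linear ODE
\begin{equation*}
\tfrac{1}{2}\sigma^2 x^2 u''(x) + (r + \lambda_i - g + \sigma^2) x u'(x) - \bigl(\rho - r + g - \lambda_i - q_{ii}\bigr) u(x) = - h'(x) - \sum_{j \neq i} q_{ij}\, v(x, j),
\end{equation*}
whose right-hand side is continuous (by the already-noted continuity of each $v(\cdot, j)$) and whose leading coefficient is bounded away from zero on $[x_0 - \eps, x_0 + \eps]$ since $a(i) > 0$ by Proposition~\ref{notemptyS1}(ii). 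Classical linear ODE theory then gives $v(\cdot, i) \in C^2(x_0 - \eps, x_0 + \eps)$, and by arbitrariness of $x_0$ we conclude $v(\cdot, i) \in C^2(\mathcal{C}^i)$.

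For part (ii), the key is smooth-fit at $a(i)$ and $b(i)$. I focus on $b(i)$; the argument at $a(i)$ is symmetric. Since $v(\cdot, i) \equiv c_1$ on $[b(i), \infty)$ one has $v_x(b(i)^+, i) = 0$, and since $v(\cdot, i) = V_x(\cdot, i)$ is nondecreasing (as the $x$-derivative of the convex function $V(\cdot, i)$, cf.\ Propositions~\ref{prop:preliminaryV} and \ref{prop:Vx}), one also has $v_x(b(i)^-, i) \geq 0$. To exclude the strict inequality $v_x(b(i)^-, i) =: \alpha > 0$, my plan is to adapt the classical smooth-fit argument: fix $x_0 < b(i)$ close to $b(i)$, pick $\varepsilon > 0$ small, and use the sub-optimal minimiser's rule $\theta_{\varepsilon} := \inf\{t \geq 0 : \X^{0}_t \geq b(i) + \varepsilon\} \wedge \tau_Y$, which allows $\X^{0}$ to cross $b(i)$ with positive probability. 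The sub-optimality inequality $v(x_0, i) \leq \sup_\tau \PPsi_{x_0, i}(\tau, \theta_\varepsilon)$, combined with the It\^o--Tanaka--Meyer change-of-variable formula applied to $e^{-\rrho_t} v(\X^{0}_t, i)$ on $[0, \theta_\varepsilon]$---the map $x \mapsto v(x, i)$ being $C^2$ separately on each side of $b(i)$ by part (i) and presenting a concave corner of slope-jump $-\alpha$ at $b(i)$---produces a strictly negative local-time contribution $-\tfrac{\alpha}{2}\, \EE_{(x_0, i)}\!\left[\int_0^{\theta_\varepsilon} e^{-\rrho_s} dL^{b(i)}_s\right]$. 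Since this term is of order $\alpha\, \varepsilon$ under $\PP_{(x_0, i)}$ whereas the remaining terms are quantifiable through the ODE of part (i), a suitable joint limit as $x_0 \uparrow b(i)$ and $\varepsilon \downarrow 0$ yields a contradiction, forcing $\alpha = 0$. The mirror-image argument at $a(i)$ gives $v_x(a(i)^+, i) = 0$.

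The main obstacle is the rigorous implementation of the smooth-fit step in the regime-switching framework. Two subtleties need care: (a) one has to isolate the local-time contribution at $b(i)$ from possible jumps of $Y$, which is handled by localising on $\{\tau_Y > \theta_\varepsilon\}$ whose probability tends to $1$ as $\varepsilon \downarrow 0$; and (b) one must track how the coupling term $\sum_{j \neq i} q_{ij} v(x, j)$ enters the first-order expansion around $b(i)$, noting that by the continuity of each $v(\cdot, j)$ this coupling is continuous at $b(i)$ and hence does not interfere with the sign of the local-time contribution. Once smooth-fit is established at both $a(i)$ and $b(i)$ for every $i \in \mathcal{M}$, the ODE of part (i) extends by continuity to the closed interval $[a(i), b(i)]$, and we conclude $v(\cdot, i) \in C^1(\R_+)$ as claimed.
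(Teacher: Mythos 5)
Your part (i) is essentially the paper's argument: a local Dirichlet problem with source $h'(x)+\sum_{j\neq i}q_{ij}v(x,j)$, identified with $v(\cdot,i)$ via the martingale of Proposition \ref{prop:semih}-(iii) stopped at the minimum of the exit time of the interval and the first jump time of $Y$. (One remark: the paper proves continuity of $v(\cdot,j)$ by a direct dominated-convergence estimate on $\widehat{\E}[\int_0^\infty e^{-\rrho_t}|h'((x+\eps)\X^{1,i,0}_t)-h'(x\X^{1,i,0}_t)|dt]$ rather than appealing to general theory; since the Dirichlet problem needs a continuous right-hand side, it is worth making this step explicit as you do flag it.) Part (ii) is where you genuinely diverge. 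The paper does \emph{not} use a local-time/It\^o--Tanaka contradiction argument. Instead it bounds the difference quotient directly: with $\theta^\star=\theta^\star(x,i)$ and $\tau^\star_\eps=\tau^\star(x+\eps,i)$ as a sub-optimal pair, the Lipschitz-type estimate on $h'$ from Assumption \ref{ass:h}-(iii) gives
$0 \leq \varepsilon^{-1}(v(x+\eps,i)-v(x,i)) \leq K_3\,\EE[\int_0^{\tau^\star_\eps\wedge\theta^\star}e^{-\rrho_t}\X^{1,i,0}_t(1+(\X^{x+\eps,i,0}_t)^{(m-2)^+})dt]$,
and after letting $\eps\downarrow 0$ and then $x\downarrow a(i)$, the right-hand side vanishes because $\tau^\star(x,i)\to 0$ (regularity of the non-degenerate diffusion at the boundary), yielding $v_x(a(i)+,i)=0=v_x(a(i)-,i)$ and, importantly, $\lim_{x\downarrow a(i)}v_x(x,i)=0$ in one stroke. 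Your local-time route is the classical alternative and can be made rigorous here -- the submartingale of Proposition \ref{prop:semih}-(i) does cross $b(i)$ before $\tau^\star$ (by Proposition \ref{prop:structure}-(ii)), the putative kink contributes $-\tfrac{\alpha}{2}\EE[L^{b(i)}]$ of order $\alpha\eps$, and the absolutely continuous drift plus the regime-coupling term are $O(\EE[t\wedge\mu_\eps])=O(\eps^2)$ -- but as written the decisive joint limit is only sketched, and you must also supply the final observation (which you do) that the ODE bounds $v_{xx}$ near the boundary so that matching one-sided derivatives upgrades to genuine $C^1$. In short: your approach trades the paper's reliance on the specific Lipschitz structure of $h'$ for a more delicate order-of-magnitude comparison with local times; both are viable, the paper's being the shorter and more self-contained in this setting.
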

\begin{proof}
We prove the two parts separately. 
\vspace{0.15cm}

\emph{Proof of (i).} Clearly, for any $i\in \mathcal{M}$, $v(\cdot,i) \in C^{2}(\mathcal{S}_1^i \cup \mathcal{S}_2^i) \setminus \{a(i),b(i)\}$ since $v \equiv c_1$ in $\mathcal{S}_1^i \setminus \{b(i)\}$ and $v \equiv c_2$ in $\mathcal{S}_2^i\setminus \{a(i)\}$. Thus, what remains to be proved is that $v(\cdot,i) \in C^{2}(\mathcal{C}^i)$, which is presented below. 

Let $i \in \mathcal{M}$ be given and fixed, and let $\alpha < \beta$ such that $[\alpha,\beta] \subset \mathcal{C}^i=\{x \geq 0:\, a(i)<x<b(i)\}$.
Then, setting $f(x,i):= h_x(x,i) + \sum_{j \neq i}q_{ij} v(x,j)$, for any $x\in (\alpha,\beta)$, consider a function $w(\cdot,i):\R_+ \mapsto \R$ that solves the ordinary differential equation 
\begin{align}
\label{ODE} 
\frac{1}{2}\sigma^2 x^2 w_{xx}(x,i) + (r - g + \lambda_i + \sigma^2)x w_x(x,i) - \big(\rho_i -\lambda_i - r + g - q_{ii}\big)w(x,i)  
= -f(x,i),
\end{align}
with boundary conditions $w(\alpha,i)=v(\alpha,i)$ and $w(\beta,i)=v(\beta,i)$.
Since $x \geq \alpha > a(i)>0$, the differential operator in \eqref{ODE} is uniformly elliptic and the solution $w$ of the above Dirichlet problem is unique and is such that $w(\cdot,i)\in C^2((\alpha,\beta))$. 
Then, using this function $w$ and recalling that $i \in \mathcal{M}$ is given and fixed, define the function $\overline{w}: (\alpha,\beta) \times \mathcal{M} \mapsto \R$ as follows:
\begin{equation}
\label{extensionw} 
\overline{w}(x,j):=
\begin{cases}
\displaystyle w(x,i)\,\,\qquad\,\,\, &\mbox{if $j=i$}\\[+1pt]
\displaystyle v(x,j)\,\,\quad\,  &\mbox{if $j \neq i$}.
\end{cases}
\end{equation}
In addition, for $x \in (\alpha,\beta)$, let $\tau_{\alpha,\beta}:=\inf\{t\geq 0: {\X}^{x,i,0}_t \notin (\alpha,\beta)\}$, $\tau_1:=\inf\{t\geq 0: Y^i_t \neq i\}$, and set $\zeta:=\tau_{\alpha,\beta} \wedge \tau_1$. Given that $Y_t=i$ for all $t < \zeta$, Dynkin's formula yields that
\begin{equation}
\label{repr:solODE}
\overline{w}(x,i) = w(x,i) = \EE_{(x,i)}\bigg[e^{-\rrho_\zeta}v({\X}^0_{\zeta}, Y_{\zeta}) + \int_0^{\zeta}e^{-\rrho_t}h_x({\X}^0_t, i) dt\bigg],
\end{equation}
due to \eqref{extensionw}, which implies that $\overline{w}({\X}^0_{\zeta}, Y_{\zeta}) = v({\X}^0_{\zeta}, Y_{\zeta})$, and \eqref{ODE}, which implies that
\begin{multline}
\frac{1}{2}\sigma^2 x^2 \overline{w}_{xx}(x,i) + (r - g + \lambda_i + \sigma^2)x \overline{w}_x(x,i) - \big(\rho_i - \lambda_i -r+g\big)\overline{w}(x,i) 
\nonumber\\ 
+ \sum_{j\neq i}q_{ij}\big[\overline{w}(x,j)-\overline{w}(x,i)] +  h_x(x,i) 
\end{multline}
\vspace{-0.65cm}
\begin{multline}
= \frac{1}{2}\sigma^2 x^2 w_{xx}(x,i) + (r - g + \lambda_i + \sigma^2)x w_x(x,i) - \big(\rho_i - \lambda_i -r+g - q_{ii}\big)w(x,i) + f(x,i) = 0 . \nonumber
\end{multline}

However, since $[\alpha, \beta] \subset \mathcal{C}^i$, we have $\zeta \leq \tau^{\star} \wedge \theta^{\star}$, hence it follows from Proposition \ref{prop:semih}-(iii), that the right-hand side of \eqref{repr:solODE} is equal to $v(x,i)$. Therefore, $w \equiv v$ in $(\alpha,\beta) \times \mathcal{M}$ by the arbitrariness of $i$. Also, by the arbitrariness of $(\alpha,\beta)$, we conclude that $w=v$ in $\mathcal{C}$, hence $v(\cdot,i)\in C^2(\mathcal{C}^i)$ for any $i \in \mathcal{M}$.  
\vspace{0.15cm}

\emph{Proof of (ii).} 
We first prove that $v(\cdot,i) \in C^0(\mathbb{R}_+)$ for any $i\in \mathcal{M}$. Since $x \mapsto v(x,i)$ is increasing, we get for any arbitrary $\varepsilon \in (0,1)$ and $(x,i) \in \mathcal{O}$ that 
\begin{align}
\label{eq:vcont}
\displaystyle 0 \leq v(x+\varepsilon,i) - v(x,i) 
&\leq \displaystyle \EE\bigg[\int_{0}^{\infty}e^{-\rrho_t}\big|h_x\big((x+\varepsilon)\cdot \X^{1,i,0}_t, Y_t^i\big)-h_x\big(x \cdot \X^{1,i,0}_t, Y_t^i\big)\big|dt\bigg].
\end{align}
Since 
$|h_x((x+\varepsilon) \cdot \X^{1,i,0}_t, Y_t^i)-h_x(x \cdot \X^{1,i,0}_t, Y_t^i)| \leq 2h_x((x+1)\cdot \X^{1,i,0}_t, Y_t^i)$, $\PP$-a.s.\ 
and 
$\EE[\int_{0}^{\infty}e^{-\rrho_t} h_x\big((x+1)\cdot \X^{1,i,0}_t, Y_t^i \big) dt] < \infty$
due to Assumptions \ref{ass:h}-(iii) and \ref{ass:rho}, we can take limits as $\varepsilon \downarrow 0$ and invoke the dominated convergence theorem in \eqref{eq:vcont} to obtain the claimed continuity of $v(\cdot,i)$ for any $i\in \mathcal{M}$.
\vspace{0.1cm}

In view of the result in part $(i)$ and of the continuity of $v$ proved above, it suffices to show that $v_x(\cdot,i)$ is continuous across the free boundaries $a(i)$ and $b(i)$, for any $i\in \mathcal{M}$. We provide details only for the continuity of $v_x(x,i)$ at $x=a(i)$. Similar arguments apply to show also the continuity of $v_x(x,i)$ at $x=b(i)$.

Take again an arbitrary $(x,i) \in \mathcal{C}$, set $\theta^{\star}:=\theta^{\star}(x,i)=\inf\{t \geq 0: {\X}^{x,i,0}_t \geq b(Y^i_t)\}$ and for a sufficiently small $\varepsilon>0$, set $\tau^{\star}_{\varepsilon}:=\tau^{\star}(x+\varepsilon,i)=\inf\{t \geq 0: \X^{x+\varepsilon,i,0}_t \leq a(Y^i_t)\}$. Then, recalling that $x \mapsto v(x,i)$ is increasing, we can write by Assumption \ref{ass:h}-(iii)
\begin{align*}
0 \leq \displaystyle \frac{v(x+\varepsilon,i)-v(x,i)}{\varepsilon} &\leq \frac{1}{\varepsilon} \, \EE\bigg[\int_0^{\tau^{\star}_{\varepsilon} \wedge \theta^{\star}} e^{-\rrho_t} \Big| h_x\big((x+\varepsilon)\cdot \X^{1,i,0}_t, Y_t^i\big) - h_x\big(x\cdot \X^{1,i,0}_t, Y_t^i\big)\Big| dt\bigg] \nonumber \\
&\leq  K_3 \, \EE\bigg[\int_0^{\tau^{\star}_{\varepsilon} \wedge \theta^{\star}} e^{-\rrho_t} \X^{1,i,0}_t \big[1 + \big(\X^{x+\varepsilon,i,0}_t\big)^{(m-2)^+}\big] dt\bigg].
\end{align*}
Letting $\varepsilon \downarrow 0$, noticing that $\tau^{\star}_{\varepsilon} \rightarrow \tau^{\star}$, $\PP$-a.s., and invoking the dominated convergence theorem thanks to Assumption \ref{ass:rho} yields
\begin{equation*}
0 \leq v_x(x,i) \leq K_3 \, \EE\bigg[\int_0^{\tau^{\star} \wedge \theta^{\star}} e^{-\rrho_t} \X^{1,i,0}_t \big[1 + x^{(m-2)^+}\cdot \big(\X^{1,i,0}_t\big)^{(m-2)^+}\big] dt\bigg].
\end{equation*}
Then by taking limits as $x \downarrow a(i)$ in the latter expression we obtain $v_x(a(i)+,i)=0$. Given that $v(x,i)=c_2$ for all $x \leq a(i)$ we conclude that $v_x(\cdot,i)$ is continuous at $x=a(i)$. 
\end{proof}

\section{The Optimal Debt Management Rule}
\label{sec:OCrule}

Combining Theorem \ref{thm:reg-v} with Proposition \ref{prop:Vx} we immediately have for any $i \in \mathcal{M}$, that $V(\cdot,i) \in C^2(\mathbb{R}_+)$. Hence by the Dynamic Programming Principle (see, e.g., \cite{FlemingSoner}, Chapter VIII.5; see also \cite{BouchardTouzi}, in particular Remarks 3.10 and 3.11, for a proof in a very general setting)
\begin{equation*}
V(x,i) = \inf_{\varphi \in \mathcal{A}}\E_{(x,i)}\bigg[e^{-\int_0^\tau \rho_{Y_s}ds}V(X^{\varphi}_{\tau}, Y_{\tau}) + \int_{0}^{\tau}e^{-\int_0^t \rho_{Y_s}ds} h(X^{\varphi}_t, Y_t) dt + \int_{0}^{\tau} e^{-\int_0^t \rho_{Y_s}ds} \big(c_1d\eta_t - c_2d\xi_t\big)\bigg],
\end{equation*}
for any $\mathbb{F}$-stopping time $\tau$, $V$ identifies with a classical solution to the Hamilton-Jacobi-Bellman (HJB) equation
\begin{equation}
\label{HJB}
\min\big\{\big(\mathcal{G}-\rho_i)V(x,i) + h(x,i), -c_2 + V_x(x,i), c_1 - V_x(x,i)\big\}=0, \qquad (x,i) \in \mathcal{O} . 
\end{equation}
Here $\mathcal{G}$ is the infinitesimal generator of $(X^0,Y)$, which acts on functions $f: \mathcal{O} \to \mathbb{R}$ with $f(\cdot, i) \in C^2(\mathbb{R})$ for any given and fixed $i\in \mathcal{M}$ as
\begin{equation}
\label{generator}
\mathcal{G} f(x,i):= \frac{1}{2}\sigma^2 x^2 f_{xx}(x,i) + (r - g + \lambda_i)x f_x(x,i) + \sum_{j \neq i}q_{ij}\big[f(x,j)-f(x,i)\big].
\end{equation}
It is worth noting that, due to \eqref{generator}, equation \eqref{HJB} is actually a system of variational inequalities, coupled through the transition rates $q_{ij}$.

In what follows, we will use the optimal boundaries $a(\cdot)$ and $b(\cdot)$ of \eqref{bds}, which define the value function of the associated optimal stopping game in \eqref{valueOS} (equivalently, \eqref{valueOS2}), in order to construct the optimal debt ratio management policy for the original problem \eqref{eq:valueOC}.  

To that end, recall the boundaries $a(\cdot)$ and $b(\cdot)$ of \eqref{bds}, let $x\in [a(i),b(i)]$, $i\in \mathcal{M}$ and denote by $\widetilde{\mathcal{U}}$ the set of right-continuous adapted nondecreasing processes starting from $0$ at initial time. 
Then consider the two-sided Skorokhod reflection problem $\textbf{SP}(a,b;x,i)$ defined as: 
\begin{align}
\tag{$\textbf{SP}(a,b;x,i)$}
\hspace{-10pt}
\text{Find $(\widetilde\xi,\widetilde\eta) \in \widetilde{\mathcal{U}} \times \widetilde{\mathcal{U}}$ 
s.t.}
\left\{
\begin{array}{l}
\displaystyle X^{x,i,\widetilde\varphi}_{t}\in[a(Y_t),b(Y_t)],\,\, \text{$\P$-a.s.~for all $t>0$},\\[+9pt]
\displaystyle \int^{T}_0{\mathds{1}_{\{X^{x,i,\widetilde\varphi}_{t}>a(Y_t)\}}d\widetilde\xi_{t}}=0,\,\, \text{$\P$-a.s.~for any $T>0$,}\\[+9pt]
\displaystyle \int^{T}_0{\mathds{1}_{\{X^{x,i,\widetilde\varphi}_{t}<b(Y_t)\}}d\widetilde\eta_{t}}=0,\,\, \text{$\P$-a.s.~for any $T>0$,}
\end{array}
\right.
\end{align}
where we set $\widetilde\varphi := \widetilde\xi - \widetilde\eta$.
Such a problem admits a unique solution $(\widetilde{\xi}^\star,\widetilde{\eta}^\star)$; indeed, recalling \eqref{controlsbar} and \eqref{Xbar}, we can apply Proposition 2.3, Corollary 2.4 and Theorem 2.6 in \cite{Burdzyetal} by setting, in the notation of that paper, $\phi(t):=X^{x,i,\widetilde\varphi}_{t}/X^{x,i,0}_{t}$, $\psi(t):=x$, $\eta_{\ell}(t):=\int_0^t \frac{d\widetilde\xi_{s}}{X^{x,i,0}_{s}}$, $\eta_{r}(t):=\int_0^t \frac{d\widetilde\eta_{s}}{X^{x,i,0}_{s}}$, $\ell(t):=a(Y_t)/X^{x,i,0}_{t}$ and $r(t):=b(Y_t)/X^{x,i,0}_{t}$ 
(see also \cite{DAuria} for another example of a regime dependent Skorokhod problem).

We denote $\widetilde{\varphi}^\star:=\widetilde{\xi}^\star - \widetilde{\eta}^\star$ and we notice that $\text{supp}\{d\widetilde{\xi}^\star\}\cap \text{supp}\{d\widetilde{\eta}^\star\}=\emptyset$, since $a(i) < b(i)$ for any $i\in \mathcal{M}$ (see Proposition \ref{notemptyS1}).\
 Then, for any $(x,i) \in \mathcal{O}$ define the control (here and in the rest of the paper, $(\,\cdot\,)^+$ denotes the positive part) 
\begin{equation}
\label{eq:OC}
\begin{cases}
\; \varphi^{\star}:=\xi^{\star}-\eta^{\star} \quad \text{such that } \quad 
{\xi}^{\star}_0=0 ={\eta}^{\star}_0, \; \P-\text{a.s.}, \quad \text{where for any $t>0$,}\\
\; \xi^{\star}_t:= (a(i)-x)^{+} + \widetilde{\xi}^\star_{t-} \qquad \text{and} \qquad \eta^{\star}_t:= (x-b(i))^{+} + \widetilde{\eta}^\star_{t-} .
\end{cases}
\end{equation} 
The remaining of this section is dedicated to proving the optimality of the control \eqref{eq:OC} for the original debt ratio management problem \eqref{eq:valueOC}.

Before doing so, it is worth noticing that the debt ratio management policy prescribed by the controls in \eqref{eq:OC} involves two types of actions by the government: 
\vspace{0.15cm}
\\
$(a)$ {\it Small-scale actions} employed when the debt ratio $X_t$ approaches, at any time $t\geq 0$, either boundary $a(Y_t)$ from above or boundary $b(Y_t)$ from below. 
The purpose of these measures is to make sure (with a minimal effort) that the debt ratio level $X_t$ is kept inside the interval $[a(Y_t),b(Y_t)]$. Mathematically, these are the actions caused by the continuous parts ${\xi}^{\star,cont}$ and ${\eta}^{\star,cont}$ of the controls ${\xi}^\star$ and ${\eta}^\star$, respectively (Skorokhod reflection-type policies); 
\vspace{0.15cm}
\\
$(b)$ {\it Large-scale actions} employed when the debt ratio $X_t$, at any time $t\geq 0$, is either below the boundary $a(Y_t)$ or above the boundary $b(Y_t)$. The purpose of these  measures is to bring immediately the debt ratio level $X_t$ back inside the interval $[a(Y_t),b(Y_t)]$. Mathematically, these are the actions caused at time $t=0$, by the initial jumps $(a(i)-x)^{+}$ and $(x-b(i))^{+}$, or at any time $t>0$, by the jump parts $\Delta{\xi}^\star_{t}:={\xi}^\star_{t+}-{\xi}^\star_{t}$ and $\Delta{\eta}^\star_{t}:={\eta}^\star_{t+}-{\eta}^\star_{t}$ of the controls ${\xi}^\star$ and ${\eta}^\star$, respectively (Lump-sum-type policies). 

\begin{remark} 
\label{jumps}
Note that, the large-scale actions mentioned in $(b)$ above, caused by the jump parts $\Delta{\xi}^\star_{t}$ and $\Delta{\eta}^\star_{t}$ of the controls for $t>0$, will only be needed at times of jumps of the macroeconomic regime switching process $Y_t$. These are the only times when the debt ratio level $X_t$ may exit the interval $[a(Y_{t}),b(Y_{t})]$. This is an interesting feature, coming from the inclusion of regime switching macroeconomic factors in the model, not usually observed in bounded-variation stochastic control problems without regime switching, where a lump-sum action may be required only at time $t=0$ (see, e.g., \cite{GuoPham}, among others).
\end{remark}

In order to illustrate the argument in Remark \ref{jumps}, consider the following example.  Suppose that time $T$ is a jump time from the initial economic regime $Y_{T-}=i$ to a ``worse" one $Y_{T}=j$. Suppose also that, immediately before the jump, the debt ratio was inside the required bounds (i.e.\ $a(i) < X_{T-} < b(i)$), but after the jump it ends up above the new upper bound under the new regime $j$ (i.e.\ $a(j) <  b(j) < X_{T}$). In this case, the optimal debt ratio management policy of the government, which was ``just observing" (no-action) before the regime change, will now require a lump-sum type of austerity policy, e.g.\ with a large-scale spending cut, that can decrease the debt ratio level by $\Delta{\xi}^\star_{T}=X_T-b(j)$.  

\vspace{0.1cm}
We now proceed with the next lemma showing the admissibility of the control $\varphi^{\star}$ in \eqref{eq:OC}.
\begin{lemma}
\label{OCadm}
For any $(x,i) \in \mathcal{O}$, we have $\varphi^{\star}\in \mathcal{A}(x,i)$.
\end{lemma}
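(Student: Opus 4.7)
The plan is to verify the three defining properties of the admissibility class $\mathcal{A}(x,i)$ in \eqref{setA}. Path regularity $\varphi^\star\in\mathcal{V}$ is immediate from Skorokhod's theorem: the solution $(\widehat{\xi},\widehat{\eta})$ of the reflection problem lies in $\mathcal{U}\times\mathcal{U}$, and adding the deterministic initial lumps $(a(i)-x)^+$ and $(x-b(i))^+$ preserves adaptedness, monotonicity, left-continuity and the normalisation $\xi^\star_0=\eta^\star_0=0$. Positivity of the controlled state is also direct: Propositions \ref{notemptyS1}--\ref{prop:structure} yield $0<a_1\leq a(i)\leq a(N)<b(1)\leq b(i)\leq b_N$ for every $i\in\mathcal{M}$, and the construction of $\varphi^\star$ forces $X^{x,i,\varphi^\star}_t\in[a(Y_t),b(Y_t)]\subseteq[a_1,b_N]$ for a.e.\ $t>0$, so $X^{x,i,\varphi^\star}_t\geq a_1>0$.

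The genuine content is the integrability bound $\E\big[\int_0^\infty e^{-\rho t}(d\xi^\star_t+d\eta^\star_t)\big]<\infty$. I would exploit the strict ordering $a(N)<b(1)$ from Proposition \ref{prop:structure}-(ii) by fixing any $c\in(a(N),b(1))$ and using the quadratic test function $f(x):=-(x-c)^2$. On the support of $d\xi^\star$ one has $X^\star_{t-}\leq a(Y_t)\leq a(N)<c$ (this includes the initial jump and all jumps at $Y$-transitions, since in every such case $X^\star_{t-}<a(Y_t)$), whence $f'(X^\star_{t-})=-2(X^\star_{t-}-c)\geq\kappa_\xi:=2(c-a(N))>0$; symmetrically, on the support of $d\eta^\star$, $X^\star_{t-}\geq b(Y_t)\geq b(1)>c$ gives $-f'(X^\star_{t-})\geq\kappa_\eta:=2(b(1)-c)>0$. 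Hence the pointwise inequality
\begin{equation*}
f'(X^\star_{t-})\,(d\xi^\star_t-d\eta^\star_t)\;\geq\;\kappa_\xi\,d\xi^\star_t+\kappa_\eta\,d\eta^\star_t,\qquad t\geq 0,
\end{equation*}
holds as an inequality between measures. Applying the c\`adl\`ag It\^o formula to $e^{-\rho t}f(X^{x,i,\varphi^\star}_t)$ on $[0,T]$ and taking expectations, the stochastic integral vanishes (bounded integrand, since $X^\star\in[a_1,b_N]$), and both $\E[e^{-\rho T}f(X^\star_T)]$ and the drift contribution $\E[\int_0^T e^{-\rho s}(\mathcal{G}-\rho)f(X^\star_s)ds]$ are bounded in $T$. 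The It\^o-jump correction $-\sum_{0<s\leq T}e^{-\rho s}(\Delta X^\star_s)^2$ is also bounded uniformly in $T$, because for $t>0$ the process $X^\star$ jumps only at $Y$-transition times with $|\Delta X^\star_s|\leq b_N-a_1$; letting $N$ denote the counting process of $Y$-transitions (with intensity bounded by $q:=\max_i|q_{ii}|$),
\begin{equation*}
\E\Big[\sum_{0<s\leq T}e^{-\rho s}(\Delta X^\star_s)^2\Big]\leq(b_N-a_1)^2\,\E\Big[\int_0^T e^{-\rho s}\,dN_s\Big]\leq\frac{(b_N-a_1)^2\,q}{\rho}.
\end{equation*}
Combining everything gives $\kappa_\xi\E[\int_0^T e^{-\rho t}d\xi^\star_t]+\kappa_\eta\E[\int_0^T e^{-\rho t}d\eta^\star_t]\leq K$ for a constant $K$ independent of $T$, and the claim follows by monotone convergence as $T\uparrow\infty$.

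The main obstacle I foresee is the careful bookkeeping of the jump part of $\varphi^\star$. Unlike bounded-variation control problems without regime switching, lump-sum actions here arise not only at $t=0$ but also at every $Y$-transition time at which $X^\star_{t-}\notin[a(Y_t),b(Y_t)]$ (see Remark \ref{jumps}), so It\^o's formula must be used in its c\`adl\`ag semimartingale form. One must verify that at any such jump the pre-jump value $X^\star_{t-}$ still lies on the correct side of the threshold $c$, so that the pointwise lower bound on $f'(X^\star_{t-})\,d\varphi^\star_t$ extends from the continuous Skorokhod reflection to the discrete part of $\varphi^\star$; the strict inequality $a(N)<b(1)$ in Proposition \ref{prop:structure}-(ii) is precisely what guarantees this.
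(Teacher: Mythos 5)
Your proof is correct, and it takes a genuinely different route from the paper's. The paper reduces the problem to the continuous-plus-jump reflection part $\widehat{\varphi}$ and then constructs an auxiliary function $g$ solving $(\mathcal{G}-\rho)g=0$ on $[a(1),b(N)]$ with the prescribed gradient values $g_x(a(i),i)=-1$, $g_x(b(i),i)=0$ (extended affinely outside); applying the It\^o--Meyer formula pieced over the $Y$-transition times, the continuous reflection at $b$ and all $\eta$-jumps are annihilated while the $\xi$-contributions appear with coefficient $-1$ exactly, yielding the identity $g(z,i)=\E_{(z,i)}\big[\int_0^{\infty}e^{-\rho t}\,d\widehat{\xi}_t\big]$, whence finiteness. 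Your argument replaces this tailored harmonic function by the crude quadratic $f(x)=-(x-c)^2$ with $c\in(a(N),b(1))$, exploiting that $f'$ is uniformly bounded away from zero, with the right signs, on the two (disjoint) supports of $d\xi^{\star}$ and $d\eta^{\star}$ --- including on the jump segments, since every pre-jump and intermediate value stays on the correct side of $c$. This buys you two things: you avoid having to establish solvability of the coupled ODE system for $g$ with $2N$ gradient constraints (a point the paper passes over quickly), and you bound $\xi^{\star}$ and $\eta^{\star}$ simultaneously rather than running two separate arguments. The price is that you only obtain an inequality (finiteness) rather than the exact value of the expected discounted reflection effort, and you must separately control the second-order jump correction $-\sum_s e^{-\rho s}(\Delta X^{\star}_s)^2$, which you do correctly via the bounded jump sizes on $(0,\infty)$ and the bounded intensity of $Y$-transitions. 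Both proofs rest on the same structural inputs: the It\^o--Meyer formula across the regime-switch times, the confinement $X^{\star}_t\in[a_1,b_N]$, and Proposition \ref{prop:structure} (your use of the strict separation $a(N)<b(1)$ to choose $c$ is the one place where you lean on it more heavily than the paper does).
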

\begin{proof}
Clearly $\varphi^{\star} \in \mathcal{V}$. Also, for any $(x,i) \in \mathcal{O}$, we have $X^{x,i,\varphi^{\star}}_t \geq 0$, $\P$-a.s.\ for all $t\geq 0$ since $b(i)>a(i)>0$. It thus remains only to show that 
\begin{equation}
\label{integrability}
\E_{(x,i)}\bigg[\int_0^{\infty} e^{-\int_0^t \rho_{Y_s}ds} \big(d\xi^{\star}_t + d\eta^{\star}_t\big)\bigg] < \infty.
\end{equation}
Notice that \eqref{eq:OC} yields 
$$\E_{(x,i)}\bigg[\int_0^{\infty}e^{-\int_0^t \rho_{Y_s}ds} \big(d\xi^{\star}_t + d\eta^{\star}_t\big)\bigg] = (a(i)-x)^{+} + (x-b(i))^{+} + \E_{(z(x,i),i)}\bigg[\int_{0+}^{\infty} e^{-\int_0^t \rho_{Y_s}ds} \big(d\widetilde{\xi}^\star_t + d\widetilde{\eta}^\star_t\big)\bigg],$$
where $z(x,i)=x$ if $x \in (a(i),b(i))$, $z(x,i)=a(i)$ if $x\leq a(i)$ and $z(x,i)=b(i)$ if $x\geq b(i)$. Hence, to have \eqref{integrability} it suffices to prove that 
$$\E_{(z,i)}\bigg[\int_0^{\infty}e^{-\int_0^t \rho_{Y_s}ds} \big(d\widetilde{\xi}^\star_t + d\widetilde{\eta}^\star_t\big)\bigg]<\infty,$$
for any $z \in [a(i),b(i)]$. In the following we only prove that 
\begin{equation}
\label{adm1}
\E_{(z,i)}\bigg[\int_0^{\infty}e^{-\int_0^t \rho_{Y_s}ds} d\widetilde{\xi}^\star_t \bigg]<\infty, \quad (z,i) \in [a(i),b(i)]\times \mathcal{M},
\end{equation} 
since analogous arguments can be employed to show that $\E_{(z,i)}[\int_0^{\infty}e^{-\int_0^t \rho_{Y_s}ds} d\widetilde{\eta}^\star_t ]<\infty$. 

To prove \eqref{adm1} we adapt arguments from \cite{Shreveetal}. 
Let $\widetilde{X}:=X^{\widetilde{\varphi}^\star}$ and $g:\mathbb{R}\times \mathcal{M} \to \mathbb{R}$ be any solution to
$$\big(\mathcal{G}-\rho_i\big)g(x,i)=0.$$
Then, take a fixed $T>0$ and let $0 \leq T_1 < T_2 < ... < T_{M} \leq T$ be the random times of jumps of $Y$ in the interval $[0, T]$ (clearly, the number $M$ of those jumps is random as well). Notice that the times $T_n$, for $n=1,\ldots,M$, of regime changes are the only possible jump times of $\widetilde{\varphi}^\star$, as discussed in Remark \ref{jumps}.

By the regularity of $g$ we can apply It\^o-Meyer's formula for semimartingales (\cite{Meyer}, pp.\ 278--301) to the process $(e^{-\int_0^t \rho_{Y_s}ds} g(\widetilde{X}_{t}, Y_{t}))_{t\geq0}$ on each of the intervals $[0, T_1)$, $(T_1,T_2)$,...,$(T_M,T]$. Piecing together all the terms as in the proof of Lemma 3 at p.\ 104 of \cite{Sk}, we obtain
\begin{align}
\label{adm2}
& \E_{(z,i)}\Big[e^{-\int_0^T \rho_{Y_s}ds} g(\widetilde{X}_T,Y_T)\Big] - g(z,i) =  \E_{(z,i)}\bigg[\int_0^T e^{-\int_0^t \rho_{Y_s}ds} g_x(\widetilde{X}_t,Y_t)d\widetilde{\xi}^{\star,cont}_t\bigg] 
 \\ 
&- \E_{(z,i)}\bigg[\int_0^T \hspace{-.15cm} e^{-\int_0^t \rho_{Y_s}ds} g_x(\widetilde{X}_t,Y_t)d\widetilde{\eta}^{\star,cont}_t\bigg] 
+  \E_{(z,i)}\bigg[\sum_{0\le T_n \le T} \hspace{-.25cm} e^{-\int_0^{T_{n}} \rho_{Y_s}ds}\left(g(\widetilde{X}_{T_n}, Y_{T_n})-g(\widetilde{X}_{T_n-},Y_{T_n})\right)\bigg]. \nonumber
\end{align}
Observe that, the latter expectation in \eqref{adm2} can be written as
\begin{eqnarray}
\label{sum-jumps}
&& \E_{(z,i)}\bigg[\,\sum_{0\le T_n \le T}e^{-\int_0^{T_{n}} \rho_{Y_s}ds}\left(g(\widetilde{X}_{T_n}, Y_{T_n})-g(\widetilde{X}_{T_n-},Y_{T_n})\right)\bigg] \nonumber \\
&& = \E_{(z,i)}\bigg[\,\sum_{0\le T_n \le T} e^{-\int_0^{T_{n}} \rho_{Y_s}ds} \Big(\mathds{1}_{\{\Delta\widetilde{\xi}^\star_{T_n}>0\}}+\mathds{1}_{\{\Delta\widetilde{\eta}^\star_{T_n}>0\}}\Big)\left(g(\widetilde{X}_{T_n}, Y_{T_n})-g(\widetilde{X}_{T_n-},Y_{T_n})\right)\bigg]  \\
&& = \E_{(z,i)}\bigg[\,\sum_{0\le T_n \le T} e^{-\int_0^{T_{n}} \rho_{Y_s}ds} \Big( 
\int_0^{{\Delta\widetilde{\xi}^\star_{T_n}}}g_x(\widetilde{X}_{T_n-} + u, Y_{T_n}) du
- \int_0^{{\Delta\widetilde{\eta}^\star_{T_n}}}g_x(\widetilde{X}_{T_n-} - u, Y_{T_n}) du\Big)\bigg]. \nonumber
\end{eqnarray}

Impose now that $g_x(a(i),i)=-1$ and $g_x(b(i),i)=0$, and extend the function $g$ on $(-\infty,a(i)) \cup (b(i),\infty)$ so that $g_x(x,i)=-1$ for any $x < a(i)$ and $g_x(x,i)=0$ for any $x > b(i)$ (for example, set $g(x,i):=a(i)-x + g(a(i),i)$ for $x < a(i)$ and $g(x,i)=g(b(i),i)$ for $x>b(i)$). Then, since $\widetilde{\xi}^\star_{\cdot}$ is flat off $\{t\geq 0: \widetilde{X}_t \leq a(Y_t)\}$ and $\widetilde{\eta}_{\cdot}$ is flat off $\{t\geq 0: \widetilde{X}_t \geq b(Y_t)\}$ (cf.\ Problem $\textbf{SP}(a,b;z,i)$), we get 
\begin{align}
\label{xietacont}
\begin{cases}
g_x(\widetilde{X}_t,Y_t)d\widetilde{\xi}^{\star,cont}_t=-d\widetilde{\xi}^{\star,cont}_t 
\quad &\text{and } \quad 
g_x(\widetilde{X}_t,Y_t)d\widetilde{\eta}^{\star,cont}_t =0, \\
\int_0^{\Delta\widetilde{\xi}^\star_{T_n}}g_x(\widetilde{X}_{T_n-} + u, Y_{T_n}) du = - \Delta\widetilde{\xi}^\star_{T_n}
\quad &\text{and } \quad 
\int_0^{\Delta\widetilde{\eta}^\star_{T_n}}g_x(\widetilde{X}_{T_n-} - u, Y_{T_n}) du = 0.
\end{cases} 
\end{align}
Therefore, by substituting \eqref{xietacont} in \eqref{sum-jumps} and then \eqref{adm2}, we get that  
\begin{equation}
\label{adm2-bis}
\E_{(z,i)}\Big[e^{-\int_0^{T} \rho_{Y_s}ds} g(\widetilde{X}_T,Y_T)\Big] - g(z,i) = - \E_{(z,i)}\bigg[\int_0^T e^{-\int_0^{t} \rho_{Y_s}ds}  d\widetilde{\xi}^\star_t\bigg].
\end{equation}

Finally, given that $g(\widetilde{X}_T,Y_T) \leq \max_{i \in \mathcal{M}}\sup_{x \in [\min_j a(j), \max_j b(j)]}g(x,i)$, $\P_{(x,i)}$-a.s., we can let $T\uparrow \infty$, and apply the dominated convergence theorem on the left-hand side of \eqref{adm2-bis} and the monotone convergence theorem on its right-hand side, to obtain 
$$g(z,i) = \E_{(z,i)}\bigg[\int_0^{\infty} e^{-\int_0^{t} \rho_{Y_s}ds}  d\widetilde{\xi}^\star_t\bigg].$$
The finiteness of the function $g$ constructed above, yields \eqref{adm1}.
\end{proof}

Thanks to the admissibility of $\varphi^{\star}$ we can now prove its optimality.
\begin{theorem}
\label{thm:OC}
The admissible ${\varphi}^{\star}={\xi}^{\star} - {\eta}^{\star}$ of \eqref{eq:OC} is optimal for the problem \eqref{eq:valueOC}.
\end{theorem}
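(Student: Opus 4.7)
The plan is a classical verification argument. First, I note that combining $V_x=v$ from Proposition \ref{prop:Vx} with the $C^1$-regularity of $v(\cdot,i)$ in Theorem \ref{thm:reg-v} gives $V(\cdot,i)\in C^2(\mathbb{R}_+)$ for every $i\in\mathcal{M}$, and the discussion opening Section \ref{sec:OCrule} already identifies $V$ as a classical solution of the HJB equation \eqref{HJB}. Unpacked, \eqref{HJB} reads
\begin{equation*}
(\mathcal{G}-\rho)V(x,i)+h(x)\ge 0,\qquad c_2\le V_x(x,i)\le c_1,\qquad (x,i)\in\mathcal{O},
\end{equation*}
with the first inequality saturated on $\{a(i)<x<b(i)\}$, and $V_x=c_2$ (resp.\ $V_x=c_1$) on $\{x\le a(i)\}$ (resp.\ $\{x\ge b(i)\}$) as a consequence of \eqref{bds} and $V_x=v$.

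The core step is to apply It\^o--Meyer's formula for semimartingales to $e^{-\rho t}V(X^{\varphi}_t,Y_t)$ for an arbitrary $\varphi=\xi-\eta\in\mathcal{A}(x,i)$, piecing the expansions over the intervals between consecutive jumps of $Y$ exactly as in \eqref{adm2}--\eqref{sum-jumps} within the proof of Lemma \ref{OCadm}, and localising the Brownian part by $\tau_n:=\inf\{t\ge0:X^{\varphi}_t\ge n\}$. Using the fundamental theorem of calculus on each jump to rewrite $V(X^{\varphi}_{s+},Y_s)-V(X^{\varphi}_s,Y_s)$ as an integral of $V_x$ against $\Delta\xi_s$ or $\Delta\eta_s$, and invoking in turn the three HJB inequalities, yields
\begin{equation*}
V(x,i)\le \E_{(x,i)}\bigl[e^{-\rho(T\wedge\tau_n)}V(X^{\varphi}_{T\wedge\tau_n},Y_{T\wedge\tau_n})\bigr] + \E_{(x,i)}\!\!\int_0^{T\wedge\tau_n}\!\!\!e^{-\rho t}h(X^{\varphi}_t)\,dt + \E_{(x,i)}\!\!\int_0^{T\wedge\tau_n}\!\!\!e^{-\rho t}\bigl(c_1 d\eta_t-c_2 d\xi_t\bigr).
\end{equation*}
Sending first $n\uparrow\infty$ and then $T\uparrow\infty$, the boundary term vanishes in view of Assumption \ref{ass:rho}, the polynomial majorisation of $V$ inherited from Assumption \ref{ass:h}-(iii) and Proposition \ref{prop:preliminaryV}, and the admissibility condition in \eqref{setA}, while the remaining right-hand side tends to $\mathcal{J}_{x,i}(\varphi)$. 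Hence $V(x,i)\le\mathcal{J}_{x,i}(\varphi)$ for every admissible $\varphi$.

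To turn this into optimality of $\varphi^\star$ from \eqref{eq:OC}, I check that every HJB inequality saturates along $X^{\varphi^\star}$. By construction of the Skorokhod problem $\textbf{SP}(a,b;x,i)$, the continuous part of $\widehat{\xi}$ (resp.\ $\widehat{\eta}$) loads only on $\{X^{\varphi^\star}_t=a(Y_t)\}$ (resp.\ $\{X^{\varphi^\star}_t=b(Y_t)\}$), where smooth fit provides $V_x(a(Y_t),Y_t)=c_2$ (resp.\ $V_x(b(Y_t),Y_t)=c_1$); between such instants $X^{\varphi^\star}_t\in(a(Y_t),b(Y_t))$ and the first HJB inequality is an equality; finally, for the initial jump (when $x\notin[a(i),b(i)]$) and for each regime-switch-triggered jump described in Remark \ref{jumps}, the fundamental theorem of calculus together with $V_x\equiv c_2$ on $[0,a(\cdot)]$ and $V_x\equiv c_1$ on $[b(\cdot),\infty)$ turns the jump contribution into precisely $c_2\Delta\xi^\star$ or $-c_1\Delta\eta^\star$. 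Consequently the whole chain of inequalities collapses, $V(x,i)=\mathcal{J}_{x,i}(\varphi^\star)$, and $\varphi^\star$ is optimal.

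The principal technical obstacle is the combined limiting/localisation step: one must show $\E_{(x,i)}[e^{-\rho(T\wedge\tau_n)}|V(X^{\varphi}_{T\wedge\tau_n},Y_{T\wedge\tau_n})|]\to 0$ as $n,T\to\infty$, and that monotone/dominated convergence applies uniformly to each Lebesgue--Stieltjes integral. This rests on the moment bounds for $X^{\varphi}$ coming from Assumption \ref{ass:rho} together with the polynomial bound on $h$ from Assumption \ref{ass:h}-(iii), and on the uniform integrability of $\int_0^\infty e^{-\rho t}(d\xi_t+d\eta_t)$ guaranteed by \eqref{setA}; the same ingredients kill the local-martingale contribution in the expectation via the localising sequence $\tau_n$.
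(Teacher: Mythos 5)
Your proposal is correct, and its essential half coincides with the paper's argument: applying It\^o--Meyer's formula to $e^{-\rho t}V(X^{\varphi^\star}_t,Y_t)$ between the jump times of $Y$, using that $(\mathcal{G}-\rho)V+h=0$ on $\{a(Y_t)<X^{\varphi^\star}_t<b(Y_t)\}$, that the continuous parts of $\xi^\star,\eta^\star$ load only where $V_x=c_2$ and $V_x=c_1$ respectively, and that the jump contributions reduce to $c_2\Delta\xi^\star$ and $-c_1\Delta\eta^\star$ by the fundamental theorem of calculus, so that every inequality saturates and $\mathcal{J}_{x,i}(\varphi^\star)=V(x,i)$. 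Where you diverge is in carrying out a full two-sided verification: your first step, proving $V(x,i)\le\mathcal{J}_{x,i}(\varphi)$ for every admissible $\varphi$ via localisation by $\tau_n$, is superfluous here, because $V$ is \emph{defined} in \eqref{eq:valueOC} as the infimum of $\mathcal{J}_{x,i}$ over $\mathcal{A}(x,i)$ --- it is not an abstract classical solution of \eqref{HJB} whose identification with the value function must be established. The paper exploits exactly this: it only needs the equality $\mathcal{J}_{x,i}(\varphi^\star)=V(x,i)$, and since $X^{\varphi^\star}_t$ remains in the compact interval $[a(1),b(N)]$ by Proposition \ref{prop:structure}, the boundary term $\E_{(x,i)}[e^{-\rho T}V(X^{\varphi^\star}_T,Y_T)]$ vanishes by dominated convergence with no localising sequence and no moment estimates on the state. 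What you flag as the ``principal technical obstacle'' (transversality for unbounded $X^{\varphi}$ under a general admissible $\varphi$, uniform integrability of the Stieltjes integrals along $\tau_n$) therefore simply does not arise in the paper's route; your approach buys robustness --- it would still work if $V$ were only a candidate solution of the HJB --- at the cost of having to resolve those limiting issues, which your sketch asserts rather than proves.
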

\begin{proof}
It suffices to show that $\mathcal{J}_{(x,i)}(\varphi^{\star}) = V(x,i)$ for any $(x,i) \in \mathcal{O}$. In order to simplify notation from now on we write $X^{\star}\equiv X^{\varphi^{\star}}$, $\P_{(x,i)}$-a.s.

Fix $(x,i)\in \mathcal{O}$, and take arbitrary $T>0$. Let $0 \leq T_1 < T_2 < ... < T_{M} < T$ be the random times of jumps of $Y$ in the interval $[0, T)$ (clearly, the number $M$ of those jumps is random as well). By the regularity of $V$ we can apply It\^o-Meyer's formula to the process $(e^{-\rho t} V(X^{\star}_{t}, Y_{t}))_{t\geq0}$ (see also proof of Lemma \ref{OCadm}), and taking expectations we get 
\begin{align}
\label{Ito-V-1}
&V(x,i) = \E_{(x,i)}\bigg[e^{-\int_0^{T} \rho_{Y_s}ds} V(X^{\star}_{T}, Y_{T})-\int_0^{T} e^{-\int_0^{t} \rho_{Y_s}ds} (\mathcal{G}-\rho)
V(X^{\star}_t,Y_t)dt\bigg]  \\ 
&- \E_{(x,i)}\bigg[\int_0^{T} \hspace{-0.1cm}e^{-\int_0^{t} \rho_{Y_s}ds} V_x(X^{\star}_t,Y_t)\Big(d\xi^{\star, cont}_t - d\eta^{\star, cont}_t\Big)\bigg] %\\& 
- \E_{(x,i)}\bigg[\sum_{0\le t < T} \hspace{-0.15cm} e^{-\int_0^{t} \rho_{Y_s}ds}
\left(V(X^{\star}_{t+}, Y_t)-V(X^{\star}_t,Y_t)\right)\bigg], \nonumber 
\end{align}
where we used the facts that the expectation of the stochastic integral vanishes since $X^{\star}_t \in [\min_{i}a(i), \max_{i}b(i)]$ and $V_x(\cdot,i)$ is continuous.

Recall now that $V$ solves \eqref{HJB} and $V_x = v$ by \eqref{Vx}, with $v$ as in \eqref{valueOS}. Hence, since $X^{\star}_t \in [a(Y_t),b(Y_t)]$, $\P_{(x,i)}$-a.s.\ for a.e.\ $t>0$, we have that $(\mathcal{G}-\rho_{Y_t})V(X^{\star}_t,Y_t)=-h(X^{\star}_t, Y_t)$ $\P_{(x,i)}$-a.s.\ for a.e.\ $t\geq0$. 
Furthermore, notice that $(\xi^{\star},\eta^{\star})$ solve the Skorokhod reflection problem, and therefore $\{t:\,d\xi^{\star}_t(\omega)>0\} \subseteq \{t:\,X^{\star}_t(\omega) \leq a(Y_t(\omega))\}$ and $\{t:\,d\eta^{\star}_t(\omega)>0\} \subseteq \{t:\,X^{\star}_t(\omega) \geq b(Y_t(\omega))\}$ for any $\omega \in \Omega$. Then, because $V_x(x,i) = c_2$ for $x \leq a(i)$ and $V_x(x,i) = c_1$ for $x \geq b(i)$, we obtain from \eqref{Ito-V-1} (see also \eqref{sum-jumps}) that
\begin{align}
\label{verif07}
V(x,i) =& \E_{(x,i)}\Big[e^{-\int_0^{T} \rho_{Y_s}ds} V(X^{\star}_{T}, Y_{T})\Big] 
+ \E_{(x,i)}\bigg[\int_0^{T} e^{-\int_0^{t} \rho_{Y_s}ds} h(X^{\star}_t, Y_t) dt +\int_0^{T} e^{-\int_0^{t} \rho_{Y_s}ds} \big(c_1 d\eta^{\star}_t - c_2 d\xi^{\star}_t\big) \bigg]. 
\end{align}

Since $X^{\star}_t \in [\min_{i}a(i), \max_{i}b(i)]$ and $V(\cdot,i)$ is continuous, applying the dominated convergence theorem gives 
$\lim_{T \uparrow \infty}\E_{(x,i)}[e^{-\int_0^{T} \rho_{Y_s}ds} V(X^{\star}_{T}, Y_{T})] = 0.$
Hence, taking limits as $T\to\infty$ in the second expectation on the right-hand side of \eqref{verif07}, and invoking the monotone convergence theorem, together with Lemma \ref{OCadm} and \eqref{setA}, we find
\begin{align*}
V(x,i) =& \E_{(x,i)}\bigg[\int_0^{\infty} e^{-\int_0^{t} \rho_{Y_s}ds} h(X^{\star}_t, Y_t) dt +\int_0^{\infty} e^{-\int_0^{t} \rho_{Y_s}ds} \big(c_1 d\eta^{\star}_t - c_2 d\xi^{\star}_t\big) \bigg]
= \mathcal{J}_{(x,i)}(\varphi^{\star}).
\end{align*}
The latter shows optimality of ${\varphi}^{\star}={\xi}^{\star} - {\eta}^{\star}$ and thus completes the proof.
\end{proof}

\begin{remark}
\label{rem:optimality}
Notice that the unique optimal debt ratio management policy $\varphi^{\star}$ from \eqref{eq:OC} is also optimal in the larger class of admissible controls $\big\{\varphi \in \mathcal{V}:\,\E[\int_0^{\infty} e^{-\int_0^{t} \rho_{Y_s}ds} \big(d\eta_t + d\xi_t\big)]< \infty\big\}$, when we allow for $X$ to become negative. In this paper we have however formulated the optimal debt management problem over the more economically relevant class $\mathcal{A}$.
\end{remark}

\section{Further Results in a Case Study}
\label{sec:case}

In this section we further develop our analysis in the case of regime switching only in the debt ratio dynamics. We henceforth assume that $\rho_i\equiv \rho$ (with $\rho:=\ol\rho = \ul\rho$) and $h(\cdot,i)\equiv h(\cdot)$ for all $i\in \mathcal{M}$.

\subsection{The Geometry of the State Space}
\label{sec:geometry}

In this subsection we study the geometry of the problem's state space. More precisely, we prove that the free boundaries $a(i)$ and $b(i)$ -- that are associated to the Dynkin game with value $v(x,i)$ (cf.\ Section \ref{sec:OSgame}) and trigger the optimal control rule -- admit a particular ordering across the different states of the economy.

Recall the Markov process $({\X}^0,Y)$ (cf.\ \eqref{newGBM})  of Section \ref{sec:OSgame}, and denote by $\mathcal{L}$ its infinitesimal generator as the second-order differential operator, acting for any $i\in \mathcal{M}$ on functions $u(\cdot,i) \in C^2(\R)$, given by
\begin{equation*}
\mathcal{L} u(x,i):= \frac{1}{2}\sigma^2 x^2 u_{xx}(x,i) + (r - g + \lambda_i + \sigma^2)x u_x(x,i) + \sum_{j \neq i}q_{ij}\big[u(x,j)-u(x,i)\big].
\end{equation*}
Then, from standard arguments based on the strong Markov property, and from Proposition \ref{prop:semih}, Proposition \ref{notemptyS1} and Theorem \ref{thm:reg-v},
it follows that for any $i\in \mathcal{M}$, the triplet $(v(\cdot,i),a(i),b(i))$ satisfies the following \emph{free-boundary problem}
\begin{align}
& \big(\mathcal{L}-\big(\rho-(r - g + \lambda_i)\big)\big)v(x,i) = - h_x(x),\,\,\qquad\,\,\,a(i) < x < b(i), \label{FBP-1} \\ 
& \big(\mathcal{L}-\big(\rho-(r - g + \lambda_i)\big)\big)v(x,i) \leq - h_x(x),\,\,\qquad\,\,\, x < b(i), \label{FBP-2} \\ 
& \big(\mathcal{L}-\big(\rho-(r - g + \lambda_i)\big)\big)v(x,i) \geq - h_x(x),\,\,\qquad\,\,\, x > a(i), \label{FBP-3} \\ 
& v(x,i) = c_2,\,\qquad\,\,\,\qquad \qquad   \qquad \qquad \qquad \qquad\quad x \leq a(i), \label{FBP-4} \\
& v(x,i) = c_1,\,\qquad\,\,\,\qquad \qquad  \qquad \qquad \qquad \qquad\quad x \geq b(i). \label{FBP-5} 
\end{align}
Moreover, $v(\cdot,i) \in C^1(\R_+)$ for any $i\in \mathcal{M}$ and $v_{xx}(\cdot,i) \in L^{\infty}_{\text{loc}}(\R_+)$ for any $i\in \mathcal{M}$.

\begin{proposition}
\label{prop:structure}
The following hold true:
\begin{itemize}
\item[(i)] $a(N) \geq a(N-1) \geq \dots \geq a(1)$ and $b(1) \leq b(2) \leq \dots \leq b(N)$;
\item[(ii)] $a(N) < b(1)$.
\end{itemize}
\end{proposition}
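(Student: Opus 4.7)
The plan is to analyse the variational‐inequality system \eqref{FBP-1}–\eqref{FBP-5} together with the universal bounds $c_2 \le v(x,i) \le c_1$. Part (ii) will then follow directly from the behaviour of those inequalities at the extreme boundaries $b(1)$ and $a(N)$; part (i) will be reduced to a monotonicity-in-$i$ property of $v$.

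For part (i), I first observe that it suffices to prove $v(x,1) \ge v(x,2) \ge \cdots \ge v(x,N)$ for every $x \ge 0$. Indeed, from the level-set definitions \eqref{bds}, if $v(\cdot,i) \ge v(\cdot,i+1)$ pointwise, then $\{x : v(x,i)>c_2\} \supseteq \{x : v(x,i+1)>c_2\}$ and $\{x : v(x,i)<c_1\} \subseteq \{x : v(x,i+1)<c_1\}$, yielding $a(i) \le a(i+1)$ and $b(i) \le b(i+1)$. To prove the monotonicity of $v$ in $i$, I would work from the Dynkin-game representation \eqref{stfunct}, in which the entire dependence on $i$ enters through the path of $Y^i$ via the exponential $X^{1,i,0}_t = \exp\{(r-g-\sigma^2/2)t + \int_0^t \lambda_{Y^i_s}\,ds + \sigma W_t\}$; since $h'$ is non-decreasing and $c_1,c_2 > 0$, the payoff $\Psi_{x,i}(\tau,\theta)$ is pathwise non-decreasing in $\int_0^t \lambda_{Y^i_s}\,ds$, and one transfers this to a stochastic comparison of the game values. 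An alternative route is a maximum-principle argument on $\Delta(x) := v(x,i+1) - v(x,i)$ applied to the coupled system \eqref{FBP-1}–\eqref{FBP-5}, exploiting $\lambda_i \ge \lambda_{i+1}$ from Assumption \ref{lY} to sign the relevant drift and running-cost contributions. I expect this monotonicity step to be the main obstacle, since handling the coupling of $Y$-paths across regimes and the mixed terms arising from the jump part of $\mathcal{L}$ requires care.

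For part (ii) I work directly with \eqref{FBP-2}–\eqref{FBP-3}. Fix any $x > b(1)$. Then $v(x,1) = c_1$ and $v_x(x,1) = v_{xx}(x,1) = 0$, so \eqref{FBP-3} at $i=1$ reduces to $\sum_{j \neq 1} q_{1j}\,[v(x,j) - c_1] - (\rho - r + g - \lambda_1)\,c_1 \ge -h'(x)$. Since $v(x,j) \le c_1$ universally, the sum on the left is non-positive, giving $h'(x) \ge (\rho - r + g - \lambda_1)\,c_1$; letting $x \downarrow b(1)$ and using continuity of $h'$, $h'(b(1)) \ge (\rho - r + g - \lambda_1)\,c_1$. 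Symmetrically, for any $x < a(N)$ we have $v(x,N) = c_2$ and $v_x(x,N) = v_{xx}(x,N) = 0$, so \eqref{FBP-2} at $i=N$ together with $v(x,j) \ge c_2$ yields $h'(x) \le (\rho - r + g - \lambda_N)\,c_2$ and hence $h'(a(N)) \le (\rho - r + g - \lambda_N)\,c_2$ in the limit.

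The conclusion of part (ii) is then immediate: Assumption \ref{ass:c12} is exactly $c_1(\rho - r + g - \lambda_1) > c_2(\rho - r + g - \lambda_N)$, so $h'(b(1)) > h'(a(N))$; strict convexity of $h$ (Assumption \ref{ass:h}(i)) makes $h'$ strictly increasing on $\mathbb{R}_+$, and therefore $b(1) > a(N)$. This final step makes transparent \emph{why} Assumption \ref{ass:c12}, introduced for the existence proof, is the decisive ingredient guaranteeing a non-empty common continuation region across all regimes.
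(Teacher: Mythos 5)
Your proposal is correct and follows essentially the same route as the paper, so only two points of comparison are worth recording. For part (i) the paper performs exactly your reduction: the boundary orderings are read off from the level-set definitions \eqref{bds} once $v(x,1)\ge v(x,2)\ge\cdots\ge v(x,N)$ is known, and that pointwise ordering is simply asserted to be ``easily seen'' from the game representation \eqref{valueOS2} because $\lambda_1\ge\cdots\ge\lambda_N$; in other words, the step you flag as the main obstacle is the one the paper dispatches in a single sentence. Be aware that your pathwise route needs a coupling of $Y$ started from different states under which $\int_0^t\lambda_{Y^i_s}\,ds$ dominates $\int_0^t\lambda_{Y^{i+1}_s}\,ds$ for all $t$; this is automatic for $N=2$ but for $N\ge 3$ it amounts to stochastic monotonicity of $Q$ with respect to the $\lambda$-ordering, a point the paper's one-line argument glosses over as well. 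For part (ii) the paper argues by contradiction: assuming $b(1)<a(N)$, it evaluates \eqref{FBP-3} with \eqref{FBP-5} at $i=1$ and \eqref{FBP-2} with \eqref{FBP-4} at $i=N$ on the common interval $(b(1),a(N))$, and derives the incompatible chain $F_N(x)\le 0\le G_1(x)$ versus $G_1(x)<F_N(x)$ from $c_2\le v\le c_1$ and Assumption \ref{ass:c12}. Your direct version uses precisely the same three ingredients (the variational inequalities in the stopping regions, the universal bounds on $v$, and Assumption \ref{ass:c12}) but evaluates them separately on $(b(1),\infty)$ and $(0,a(N))$; this is logically equivalent, and has the small bonus of producing the explicit estimates $h'(b(1))\ge c_1(\rho-r+g-\lambda_1)$ and $h'(a(N))\le c_2(\rho-r+g-\lambda_N)$, the regime-switching analogue of the bounds $a\le c_2(\rho-r+g)$ and $b\ge c_1(\rho-r+g)$ recorded in Section \ref{sec:comparison}.
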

\begin{proof}
We prove the two parts separately.
\vspace{0.15cm}

\emph{Proof of (i).} From \eqref{valueOS2} it is easily seen that $v(x,1) \geq v(x,2) \geq \dots \geq v(x,N)$ since $\lambda_1 \geq \lambda_2 \geq \dots \geq \lambda_N$. This in particular implies that $\{x \geq 0: v(x,N) > c_2 \} \subseteq \dots \subseteq \{x \geq 0: v(x,2) > c_2\} \subseteq \{x \geq 0: v(x,1) > c_2\}$ and therefore, in view of \eqref{bds}, we know that $a(N) \geq a(N-1) \geq \dots \geq a(1)$. 

Analogous arguments show that $b(1) \leq b(2) \leq \dots \leq b(N)$.
\vspace{0.15cm}

\emph{Proof of (ii).} We argue by contradiction and we suppose that $b(1) < a(N)$. 

On one hand, any $x\in (b(1),a(N))$ is such that $x>b(1)>a(1)$ and $v(x,1)=c_1$ (cf.\ \eqref{bds}). Therefore \eqref{FBP-3} and \eqref{FBP-5} yield
\begin{equation}
\label{contra1}
-\big(\rho - \mu_1\big)c_1 + \sum_{j\neq 1}q_{1j}v(x,j) + q_{11}c_1 + h_x(x) \geq 0,
\end{equation} 
where we used the equality $\sum_{j\neq 1}q_{1j} = - q_{11}$ and set $\mu_1:=r +\lambda_1 - g$.

On the other hand, we also have that, any $x\in (b(1),a(N))$ is such that $x<a(N)<b(N)$ and $v(x,N)=c_2$ (cf.\ \eqref{bds}). Hence, \eqref{FBP-2} and \eqref{FBP-4} give
\begin{equation}
\label{contra2}
-\big(\rho - \mu_N\big)c_2 + \sum_{j\neq N}q_{Nj}v(x,j) + q_{NN}c_2 + h_x(x) \leq 0,
\end{equation}
where we used the equality $\sum_{j\neq N}q_{Nj} = - q_{NN}$ and set $\mu_N:=r +\lambda_N - g$.

In all, it follows from \eqref{contra1}--\eqref{contra2} that, for any $x\in (b(1),a(N))$, 
\begin{align}
\label{contra3}
F_N(x) &:=-\big(\rho - \mu_N\big)c_2 + \sum_{j\neq N}q_{Nj} v(x,j) + q_{NN}c_2 + h_x(x)  \nonumber \\
&\leq 0 \leq -\big(\rho - \mu_1\big)c_1 + \sum_{j\neq 1}q_{1j} v(x,j) + q_{11}c_1 + h_x(x)=:G_1(x).
\end{align}

Notice now that, by taking into account the inequalities $c_2 \leq v(x,j) \leq c_1$ for any $(x,j)\in \mathcal{O}$, together with Assumption \ref{ass:c12}, we obtain for any $x\in (b(1),a(N))$ that
\begin{align*}
G_1(x) \leq  -\big(\rho - \mu_1 \big) c_1 + h_x(x)  < -\big(\rho - \mu_N\big)c_2 + h_x(x) \leq F_N(x) , 
\end{align*}
which in view of \eqref{contra3} leads to a contradiction. 
\end{proof}

Proposition \ref{prop:structure} has the important consequence of characterising the geometry of continuation and stopping regions. This fact, combined with the regularity of the value function $v(\cdot,i)$ proved in Theorem \ref{thm:reg-v}, provides an operative method to determine the free boundaries $a(i)$ and $b(i)$, $i\in \mathcal{M}$. Indeed, since for any $i \in \mathcal{M}$ we have that $v(\cdot,i) \in C^1(\R_+)$, then $v(\cdot,i)$ must be necessarily continuously differentiable at the free boundaries $a(j)$ and $b(j)$ for all $j \in \mathcal{M}$. This yields the following system of nonlinear equations for the $2N$-dimensional vector $(a(1),b(1),\dots,a(N),b(N))$:
\begin{align}
\label{system-bds-1}
& v(a(i)+,i) = c_2 \quad \mbox{and} \quad v_x(a(i)+,i) = 0, \quad 
\forall\,\,i\in \mathcal{M}  \\
& v(b(i)-,i) = c_1 \quad \mbox{and} \quad v_x(b(i)-,i) = 0, \quad 
\forall\,\,i\in \mathcal{M} \label{system-bds-2} \\
& v(a(j)-,i) = v(a(j)+,i) \quad \mbox{and} \quad v_x(a(j)-,i) = v_x(a(j)+,i), \quad 
\forall\,\,(i,j)\in \mathcal{M}^2:\,\,j>i, \label{system-bds-3} \\
& v(b(j)-,i) = v(b(j)+,i) \quad \mbox{and} \quad v_x(b(j)-,i) = v_x(b(j)+,i), \quad 
\forall\,\,(i,j)\in \mathcal{M}^2:\,\,j<i. \label{system-bds-4}  
\end{align}

We will see how to explicitly write the system of equations for the boundaries in the following subsection, where we study the specific case in which the Markov chain $Y$ has $N=2$ states. Using the same steps, one can similarly write the associate system of equations for the boundaries in any other case of $N>2$.

\subsection{Explicit Solution in a Case Study with Two Regimes}
\label{sec:casestudy}

In this subsection, we consider the simplest possible regime switching model of debt ratio management. In particular, the continuous-time Markov chain $Y$, modelling the macroeconomic conditions affecting the interest rate on debt, has only $N = 2$ states; namely, $Y_{t} \in \mathcal{M}:=\{1,2\}$. In view of Assumption \ref{lY}, we have $\lambda_1 > \lambda_2$.
Therefore, the states $1$ and $2$ represent the ``bad'' and ``good'' scenarios for the government, under which the interest on debt is ``high'' and ``low'', respectively. 
We further assume a quadratic running cost function $h(x) = x^2/2$ for all $x>0$, which satisfies Assumption \ref{ass:h}-(i)--(iv); e.g.\ set $m=2$ and $K_1=K_2=K_3=1$ in Assumption \ref{ass:h}-(iii). 

Thanks to \ref{prop:Vx}, the government which originally aims at solving \eqref{eq:valueOC}, given by 
\begin{equation*}
V(x,i):=\inf_{\varphi \in \mathcal{A}} 
\E_{(x,i)}\bigg[\int_0^{\infty} e^{-\rho t} \frac{1}{2} \big(X^{\varphi}_t\big)^2 dt + c_1\int_0^{\infty} e^{-\rho t} d\eta_t - c_2\int_0^{\infty} e^{-\rho t} d\xi_t\bigg]
, \quad (x,i) \in \R_+ \times \{ 1,2 \} ,
\end{equation*}
can first find the value $v(x,i)$ of the optimal stopping game \eqref{valueOS} with \eqref{stfunct} and $\mathcal{O} \equiv \R_+ \times \{ 1,2 \}$. In view of \eqref{valueOS2}--\eqref{stfunct-bis}, $v(x,i)$ can be rewritten as
\begin{align}
\label{OS:example}
v(x,i) &= \sup_{\tau \geq 0}\inf_{\theta \geq 0} 
\EE_{(x,i)}\bigg[\int_{0}^{\tau \wedge \theta}e^{-\rrho_t} \X^{0}_t dt + c_2 e^{-\rrho_\tau}\mathds{1}_{\{\tau < \theta\}} + c_1 e^{-\rrho_\theta}\mathds{1}_{\{\theta < \tau\}}\bigg] \nonumber \\
&= \inf_{\theta \geq 0}\sup_{\tau \geq 0} 
\EE_{(x,i)}\bigg[\int_{0}^{\tau \wedge \theta}e^{-\rrho_t} \X^{0}_t dt + c_2 e^{-\rrho_\tau}\mathds{1}_{\{\tau < \theta\}} + c_1 e^{-\rrho_\theta}\mathds{1}_{\{\theta < \tau\}}\bigg] ,
\end{align}
for all $(x,i) \in \mathcal{O}$ and $\rrho_\cdot$ given by \eqref{rhohat}.
Then, the original value $V$ will follow from the equation \eqref{Vx} and the optimal debt ratio management policy given by \eqref{eq:OC} will involve the boundaries $a(1) \leq a(2) < b(1) \leq b(2)$ (cf.\ Proposition \ref{prop:structure}) that we obtain by solving \eqref{OS:example}.

\subsubsection{Derivation of the Explicit Solution.}
\label{sec:explsol}

In the following we write $q_1:=q_{12}=-q_{11}$ and $q_2:=q_{21}=-q_{22}$, as well as $k_i := \rho + q_i - 2 (r - g + \lambda_i) -\sigma^2 $ for both $i=1,2$.  
Equation \eqref{FBP-1}, used to obtain the value function $v(x,i)$ of the optimal stopping game, consists of the following coupled ordinary differential equations 
{\small
\begin{align*}
&\frac{1}{2}\sigma^2 x^2 v_{xx}(x,1) + (r - g + \lambda_1 +\sigma^2) x v_x(x,1) - (\rho - r + g - \lambda_1) v(x,1) + q_1 \big( v(x,2) - v(x,1) \big) = -x \\
&\frac{1}{2}\sigma^2 x^2 v_{xx}(x,2) + (r - g + \lambda_2+\sigma^2) x v_x(x,2) - (\rho - r + g - \lambda_2) v(x,2) + q_2 \big( v(x,1) - v(x,2) \big) = -x 
\end{align*}
}for all $a(1) < x < b(1)$ and $a(2) < x < b(2)$, respectively, while the value function should also satisfy the four conditions in \eqref{system-bds-1}--\eqref{system-bds-4} at the boundaries $a(i)$ and $b(i)$, for $i=1,2$ (see also the final paragraph of Section \ref{sec:geometry} for more details). 

Solving the system of ordinary differential equations we get that
\begin{align*} 
v(x,1) &= \begin{cases} c_2  &, \text{ if } x \leq a(1) ,\\ 
A_1 x^{\alpha_1} + A_2 x^{\alpha_2} + \frac{1}{k_1} x + \frac{c_2 q_1}{\rho + q_1 - (r - g + \lambda_1)} &, \text{ if } a(1) < x \leq a(2) , \\ 
B_1 x^{\beta_1} + B_2 x^{\beta_2} + B_3 x^{\beta_3} + B_4 x^{\beta_4} 
+ \frac{q_1 + k_2}{k_1 k_2 - q_1 q_2} x  &, \text{ if } a(2) < x \leq b(1) , \\ 
c_1 &, \text{ if } x \geq b(1) 
\end{cases}
\intertext{and} 
v(x,2) &= \begin{cases} c_2  &, \text{ if } x \leq a(2) \\
\frac{\Phi_1(\beta_1)}{q_1} B_1 x^{\beta_1} + \frac{\Phi_1(\beta_2)}{q_1} B_2 x^{\beta_2} + \frac{\Phi_1(\beta_3)}{q_1} B_3 x^{\beta_3} + \frac{\Phi_1(\beta_4)}{q_1} B_4 x^{\beta_4} 
%\\ \quad \quad \quad \quad \quad \quad 
+ \frac{k_1 + q_2}{k_1 k_2  - q_1 q_2} x   &, \text{ if } a(2) < x \leq b(1) , \\ 
C_1 x^{\gamma_1} + C_2 x^{\gamma_2} + \frac{1}{k_2} x + \frac{c_1 q_2}{\rho + q_2 - (r - g + \lambda_2)}  &, \text{ if } b(1) < x \leq b(2) , \\ 
c_1  &, \text{ if } x \geq b(2) ,
\end{cases}
\end{align*}
where the constants $\alpha_2 < 0 < \alpha_1$ (under Assumption \ref{ass:rho} we have $\alpha_1 > 1$) are given by
{\small
\begin{equation*}
\alpha_{1,2} = \frac{1}{2} + \frac{r - g + \lambda_1}{\sigma^2} \pm 
\sqrt{ \left( \frac{1}{2} + \frac{r - g + \lambda_1}{\sigma^2} \right)^2 + \frac{2 \big( \rho + q_1 - (r - g + \lambda_1) \big)}{\sigma^2} }  ,
\end{equation*} 
}the constants $\gamma_2 < 0 < \gamma_1$ (under Assumptions \ref{lY} and \ref{ass:rho} we have $\gamma_1 > 1$) are given by
{\small 
\begin{equation*}
\gamma_{1,2} = \frac{1}{2} + \frac{r - g + \lambda_2}{\sigma^2} \pm 
\sqrt{ \left( \frac{1}{2} + \frac{r - g + \lambda_2}{\sigma^2} \right)^2 + \frac{2 \big( \rho + q_2 - (r - g + \lambda_2) \big)}{\sigma^2} },  
\end{equation*} 
}and the constants $\beta_4 < \beta_3 < 0 < \beta_2 < \beta_1$ are the solutions of the characteristic equation $\Phi_1(\beta) \, \Phi_2(\beta) = q_1 \, q_2$ with
{\small
\begin{equation*}
\Phi_i(\beta) = \frac{1}{2} \sigma^2 \beta^2 + \Big( r - g + \lambda_i + \frac{1}{2} \sigma^2 \Big) \beta - \big( \rho + q_i - (r - g + \lambda_i) \big) , \quad \text{for } i=1,2.
\end{equation*}
}

Then, applying the conditions in \eqref{system-bds-1} and \eqref{system-bds-2} at the boundaries $a(i)$ and $b(i)$, for $i=1,2$, we obtain the following expressions
{\small
\begin{align} \label{Ai}
A_i \equiv A_i\big( a(1) \big) &= \frac{(-1)^{i+1} a^{- \alpha_i}(1)}{\alpha_1 - \alpha_2} \bigg[
\frac{\alpha_{3-i} -1}{k_1} \, a(1) - \frac{\alpha_{3-i} c_2 \big( \rho - (r - g + \lambda_1) \big)}{\rho + q_1 - (r - g + \lambda_1)} \bigg]  \,,\\
\label{Ci}
C_i \equiv C_i\big( b(2) \big) &= \frac{(-1)^{i+1} b^{- \gamma_i}(2)}{\gamma_1 - \gamma_2} \bigg[
\frac{\gamma_{3-i} -1}{k_2 } \, b(2) - \frac{\gamma_{3-i} c_1 \big( \rho - (r - g + \lambda_2) \big)}{\rho + q_2 - (r - g + \lambda_2)} \bigg] \,,
\end{align}
}for $i=1,2$, as well as 
{\small 
\begin{align} \label{Bi}
B_i &\equiv B_i\big( a(2), b(1) \big) \qquad \qquad \qquad \qquad \qquad \qquad \qquad \qquad \qquad \qquad \qquad \qquad \qquad \qquad \qquad \qquad \qquad \\ 
&= \frac{
\sum_{\substack{ j,k,l \in \mathcal{I} \setminus \{i\} : \\ l \not= j<k \not= l}} \hspace{0pt} (-1)^{k-j+\mathds{1}_{\{l>i\}}} (\beta_j - \beta_k) \bigg[ \frac{\Phi_1(\beta_j) \Phi_1(\beta_k)}{q_1^2}  f_{l,1}\big(b(1)\big) \Big( \frac{a(2)}{b(1)}\Big)^{\beta_j + \beta_k} \hspace{-4pt}+ \frac{\Phi_1(\beta_l)}{q_1}  f_{l,2}\big(a(2)\big) \Big( \frac{a(2)}{b(1)}\Big)^{\beta_l} \bigg] 
}{
b^{\beta_i}(1) \sum_{\substack{j,k,l \in \mathcal{I} \setminus \{1\} : \\ j \not= k<l \not= j}} 
\hspace{0pt} (-1)^{j+1} (\beta_1 - \beta_j) (\beta_k - \beta_l) \bigg[  \frac{\Phi_1(\beta_1) \Phi_1(\beta_j)}{q_1^2} \Big( \frac{a(2)}{b(1)}\Big)^{\beta_1 + \beta_j} \hspace{-4pt}+ \frac{\Phi_1(\beta_k) \Phi_1(\beta_l)}{q_1^2} \Big( \frac{a(2)}{b(1)}\Big)^{\beta_k + \beta_l} \bigg] 
} \nonumber
\end{align}
}for $i \in \mathcal{I} := \{1,2,3,4\}$ and 
{\small
\begin{align*}
f_{m,n}(x) 
&= \frac{(1-\beta_m) ( k_{3-n} + q_n) \, x }{k_1 k_2  - q_1 q_2}+ \beta_m c_n
\end{align*}
}for $m \in \mathcal{I}$ and $n=1,2$. Notice that under Assumption \ref{ass:rho} all the denominators in the formulas above are nonzero.

We then apply \eqref{system-bds-3}--\eqref{system-bds-4} and we obtain
\begin{align*}
&v(a(2)+,1) = v(a(2)-,1) \quad \& \quad v_x(a(2)+,1) = v_x(a(2)-,1), \\ 
&v(b(1)+,2) = v(b(1)-,2) \quad\, \& \quad v_x(b(1)+,2) = v_x(b(1)-,2).
\end{align*}
Using the above conditions for the expressions of $v(x,i)$ for $i=1,2$ with $A_i$, $C_i$ for $i=1,2$ and $B_i$ for $i=1,2,3,4$ given by \eqref{Ai}--\eqref{Bi}, we obtain the boundaries $a(i)$ and $b(i)$ for $i=1,2$ as the solution of the following system of four arithmetic equations:  
{\small
\begin{align}
\label{System-1}
&\sum_{i=1}^{2} A_i\big( a(1) \big) a^{\alpha_i}(2) = \sum_{i=1}^{4} B_i \big( a(2), b(1) \big) \, a^{\beta_i}(2) + \frac{q_1 \big( f_{1,2}\big( a(2)\big) - \beta_1 c_2 \big)}{(1- \beta_1) k_1} 
%\\&\qquad \qquad \qquad \qquad \quad\; 
-\frac{q_1c_2}{\rho + q_1 - (r - g + \lambda_1)} 
%\nonumber
\\
&\sum_{i=1}^{2} \alpha_i A_i\big( a(1) \big) a^{\alpha_i}(2) = \sum_{i=1}^{4} \beta_i B_i \big( a(2), b(1) \big) \, a^{\beta_i}(2) + \frac{q_1 \big( f_{1,2}\big( a(2)\big) - \beta_1 c_2 \big)}{(1- \beta_1)k_1} \label{System-2} \\
\label{System-3}
&\sum_{i=1}^{2} C_i\big( b(2) \big) b^{\gamma_i}(1) = \sum_{i=1}^{4} \frac{\Phi_1(\beta_i)}{q_1} B_i \big( a(2), b(1) \big) b^{\beta_i}(1) + \frac{q_2  \big( f_{1,1}\big( b(1)\big) - \beta_1 c_1 \big)}{(1- \beta_1)k_2} 
%\\&\qquad \qquad \qquad \qquad \quad\; 
-\frac{q_2 c_1}{\rho + q_2 - (r - g + \lambda_2)}  %\nonumber
\\
&\sum_{i=1}^{2} \gamma_i C_i\big( b(2) \big) b^{\gamma_i}(1) = \sum_{i=1}^{4} \beta_i \frac{\Phi_1(\beta_i)}{q_1} B_i \big( a(2), b(1) \big) b^{\beta_i}(1) + \frac{q_2  \big( f_{1,1}\big( b(1)\big) - \beta_1 c_1 \big)}{(1- \beta_1) k_2} \label{System-4}
\end{align}
} 

Finally, for any $i=1,2$, combining \eqref{FBP-2} with \eqref{FBP-4}, and \eqref{FBP-3} with \eqref{FBP-5}, we find that the boundaries $a(1), a(2), b(1), b(2)$ must necessarily be such that 
\begin{equation}
\label{bound-1}
x - \big(\rho + q_i - (r-g+\lambda_i)\big)c_2 + q_i v(x,j) \leq 0, \quad \mbox{for $j\neq i$ and $x<a(i)$},
\end{equation}
and
\begin{equation}
\label{bound-2}
x - \big(\rho + q_i - (r-g+\lambda_i)\big)c_1 + q_i v(x,j) \geq 0, \quad \mbox{for $j\neq i$ and $x>b(i)$}.
\end{equation}
The above conditions have the practical use of providing bounds on $a(1), a(2), b(1), b(2)$ that one has to check on a case by case basis when trying to solve numerically \eqref{System-1}--\eqref{System-4}.

It is worth stressing that one advantage of our direct probabilistic method -- compared to the traditional analytic guess-and-verify one -- is that existence of a solution to \eqref{System-1}--\eqref{System-4} satisfying \eqref{bound-1}--\eqref{bound-2} does not have to be proved, since it follows directly from the general theory developed in Section \ref{sec:OSgame}, in particular Theorem \ref{thm:reg-v} and Proposition \ref{prop:structure}. Moreover, we also have \emph{uniqueness} of such a solution. Indeed, if there were another quadruple $(\widetilde{a}(1), \widetilde{a}(2), \widetilde{b}(1), \widetilde{b}(2))$ solving \eqref{System-1}--\eqref{System-4} and satisfying \eqref{bound-1}--\eqref{bound-2}, by a standard verification argument one could prove that the bounded variation control that keeps the process $(X_t,Y_t)$ in the region $\{(x,i) \in \mathcal{O}:\, \widetilde{a}(i) \leq x \leq \widetilde{b}(i) \}$ for almost every $t\geq 0$ (i.e.\ solving $\textbf{SP}(\widetilde{a},\widetilde{b};x,i)$) is optimal. However, this would contradict the uniqueness of the optimal control proved in Theorem \ref{thm:existence}. 

\begin{remark}
\label{rem:complex}
Here we comment on the structure of the value function in the general case of $N\geq2$ regimes. 

In the above case study with $N=2$ regimes, there are $4$ boundaries $a(i), b(i), i=1,2$, solving uniquely the system of $4$ algebraic equations with constraints in \eqref{System-1}--\eqref{bound-2}, and the value function involves in total $8$ boundary-dependent-coefficients given by \eqref{Ai}--\eqref{Bi}. 

When solving the problem with $N$ regimes, the expression of the value function in each of the subintervals of the $i$-section of the continuation region $\mathcal{C}^i=\{x\geq 0:\, a(i) < x < b(i)\}$, for any $i\in \mathcal{M}$, will again have two components. The first component is the particular solution to the coupled system of $N$ ordinary differential equations (cf.\ \eqref{FBP-1}), and it will always be a linear function with coefficients depending only on the parameters of the problem. The second component is the general solution to the coupled system of $N$ ordinary differential equations, and it will be a polynomial with coefficients (in total the value function will involve $2N^2$ such coefficients) depending on the $2N$ boundaries (in total) of the continuation region. The latter boundaries will uniquely solve a system of $2N$ algebraic equations with constraints. 

It is then clear that for large $N$ the complexity of the problem makes its analysis a daunting task. However, by tackling the considered problem with our direct probabilistic approach, allows one to obtain important information about the structure and the regularity of the value function, as well as the geometry of the state space. Therefore, what remains to be done is just to , ind the numerical solution to the system of $2N$ algebraic equations discussed above.
\end{remark}

\subsubsection{Comparative Statics Analysis}
\label{sec:compstat}

In this subsection we show how the optimal control boundaries $a(1)$, $a(2)$, $b(1)$, $b(2)$, which define the government's debt ratio management policy, depend on the relevant model's parameters, and we provide interpretations of the results. 
In what follows, whenever we need to stress the dependence of the boundaries and value function on a given parameter $\chi$, we will write $a(i;\chi)$ and $b(i;\chi)$, as well as $v(x,i;\chi)$, $x\geq 0$ and $i=1,2$.

Our analysis begins with a theoretical proof of the monotonicity of the control boundaries with respect to $r-g$, and a numerical illustration in Figure \ref{Figure-1}. We then continue with a numerical study of the sensitivity with respect to $\sigma$ and $q_2-q_1$. Due to the complexity of our problem, proving analytically the monotonicity of $a(i)$ and $b(i)$, $i=1,2$, with respect to  $\sigma$ and $q_2-q_1$ is far from trivial. However, the explicit nature of our results (cf.\ the system of equations \eqref{System-1}--\eqref{System-4}) allows for an easy numerical implementation resulting in Figure \ref{Figure-2} and Figure \ref{Figure-3}.
\vspace{0.25cm}
 
\textit{Comparative Statics with respect to $r-g$.} We start with the following result.
\begin{proposition}
\label{prop:r-g}
For any $i\in \{1,2\}$ we have that $(r-g) \mapsto a(i;r-g)$ and $(r-g) \mapsto b(i;r-g)$ are decreasing.
\end{proposition}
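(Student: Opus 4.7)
The plan is to leverage the Dynkin-game representation of $v$ from Proposition \ref{prop:Vx} together with the definition \eqref{bds} of the boundaries. Write $p:=r-g$ and make the dependence of the relevant objects on $p$ explicit: $\rrho^p_t$, $\widehat{X}^{x,i,0;p}_t$, $\PPsi^p_{x,i}(\tau,\theta)$, $v^p(x,i)$, $a^p(i)$, $b^p(i)$. The idea is to prove that $v^p(x,i)$ is nondecreasing in $p$ for every $(x,i)\in\mathcal{O}$; the monotonicity of the boundaries then follows from the very definition \eqref{bds}.

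First, I would show pathwise monotonicity of the Dynkin-game integrand in $p$. From \eqref{rhohat}, $\rrho^p_t=(\rho - p)t - \int_0^t\lambda_{Y_s}ds$, so $p\mapsto e^{-\rrho^p_t}$ is nondecreasing for every $t\geq 0$, $\PP$-a.s. From \eqref{newGBM}, $\widehat{X}^{x,i,0;p}_t=x\exp\{(p+\tfrac{1}{2}\sigma^2)t+\int_0^t\lambda_{Y^i_s}ds+\sigma\WW_t\}$ is also nondecreasing in $p$, and since $h'$ is nondecreasing on $[0,\infty)$ by Assumption \ref{ass:h}-(i)--(ii), so is $h'(\widehat{X}^{x,i,0;p}_t)$. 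Because $c_1,c_2>0$, each of the three summands in \eqref{stfunct-bis} is pointwise nondecreasing in $p$; taking $\widehat{\E}$ and noting that the expectation is finite under Assumption \ref{ass:rho} (and therefore for any $p'\leq p$ satisfying the same assumption), one gets that $p\mapsto\PPsi^p_{x,i}(\tau,\theta)$ is nondecreasing for every couple of $\F$-stopping times $(\tau,\theta)$ and every $(x,i)\in\mathcal{O}$.

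Second, I would propagate this monotonicity through the sup-inf. For $p\leq p'$, the inequality $\PPsi^p_{x,i}(\tau,\theta)\leq \PPsi^{p'}_{x,i}(\tau,\theta)$ holds for every $(\tau,\theta)$, hence for every $\tau$
\begin{equation*}
\inf_{\theta\geq 0}\PPsi^p_{x,i}(\tau,\theta)\;\leq\;\inf_{\theta\geq 0}\PPsi^{p'}_{x,i}(\tau,\theta),
\end{equation*}
and taking $\sup_{\tau\geq 0}$ yields $v^p(x,i)\leq v^{p'}(x,i)$ via \eqref{valueOS2}. Thus $p\mapsto v^p(x,i)$ is nondecreasing for every $(x,i)\in\mathcal{O}$.

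Third, I would conclude via \eqref{bds}: if $p\leq p'$ then $\{x\geq 0:v^p(x,i)>c_2\}\subseteq\{x\geq 0:v^{p'}(x,i)>c_2\}$, so $a^{p'}(i)\leq a^p(i)$; analogously $\{x\geq 0:v^{p'}(x,i)<c_1\}\subseteq\{x\geq 0:v^p(x,i)<c_1\}$, yielding $b^{p'}(i)\leq b^p(i)$. This gives the claim for $i=1,2$. There is no real obstacle in this argument thanks to the probabilistic representation obtained in Section \ref{sec:OSgame}; the only point requiring attention is that both discount rate and state drift co-move with $p$ in the same (increasing) direction, so the two effects reinforce rather than cancel each other. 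The sole implicit assumption is that the comparison is performed over values of $p$ for which Assumption \ref{ass:rho} remains in force, so that all expectations are well defined.
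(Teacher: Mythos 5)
Your proof is correct and follows essentially the same route as the paper: the paper likewise deduces from the Dynkin-game representation that $(r-g)\mapsto v(x,i;r-g)$ is increasing and then reads off the monotonicity of $a(i)$ and $b(i)$ directly from \eqref{bds}. You simply spell out the pathwise monotonicity of the discount factor, the state process and the payoff functional that the paper states as ``easily seen''.
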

\begin{proof}
Let $i \in \{1,2\}$ be given and fixed. Remember that from \eqref{bds} we can write
\begin{align*}
&a(i;r-g)=\inf\{x \geq 0:\,v(x,i;r-g)>c_2\}, \\
&b(i;r-g)=\sup\{x \geq 0:\,v(x,i;r-g)<c_1\}.
\end{align*}
From \eqref{OS:example} it is easily seen that $(r-g) \mapsto v(x,i;r-g)$ is increasing. Hence, \eqref{bds} imply that $(r-g) \mapsto a(i;r-g)$ and $(r-g) \mapsto b(i;r-g)$ are decreasing, and the claim thus follows.
\end{proof}

\begin{remark}
It is worth noticing that the proof of the previous result does not use the fact that the continuous-time Markov chain $Y$ has only two states. Therefore, Proposition \ref{prop:r-g} does hold in the more general setting of $N\geq2$. 
\end{remark}

It is clear from \eqref{freeGBM} that the higher the real interest rate on debt (net of the GDP growth rate), the more the country's debt ratio increases in expectations. In such a case, the result of Proposition \ref{prop:r-g} implies that the government should adopt a more restrictive policy for the management of public debt, in order to dam the resulting expected costs. 
In other words, as $r-g$ increases, the critical level, below which the government aims at keeping the debt ratio, decreases, so that the government should (optimally) intervene sooner to reduce the debt ratio, through austerity policies in the form of spending cuts. On the other hand, the trigger level at which the government starts increasing the debt ratio decreases as well, meaning that the government should be willing to postpone its public investment intervention which increases the debt ratio. (see Figure \ref{Figure-1}). 

We can also observe from Figure \ref{Figure-1} that when the interest rate on debt $r$ is sufficiently higher that the GDP growth rate $g$, then the debt ratio ceiling values $b(1)$ and $b(2)$ seem to come closer, thus implying that the debt reduction policy is not strongly affected by the state of the economy. Similarly, the trigger values $a(1)$ and $a(2)$ seem to converge to each other when the GDP grows at a much higher rate than the interest on debt. Hence under such a high GDP growth, the government can adopt, independently of the economic regime, a similar policy for public investments, aiming at increasing the debt ratio.
On the contrary, the trigger levels $a(1)$ and $a(2)$ (resp.\ $b(1)$ and $b(2)$) take significantly different values when $g$ is sufficiently lower than $r$ (resp.\ $r$ is sufficiently lower than $g$), so that in this case the debt policy seems to strongly react to the state of the economy.

Furthermore, under the choice of parameters of Figure \ref{Figure-1}, the levels $b(i)$, $i=1,2$, that trigger the debt reduction policies are on average equal to $60\%$, a value in line with the Maastricht Treaty's reference value of 1992.

\begin{figure}
\includegraphics[scale=0.5]{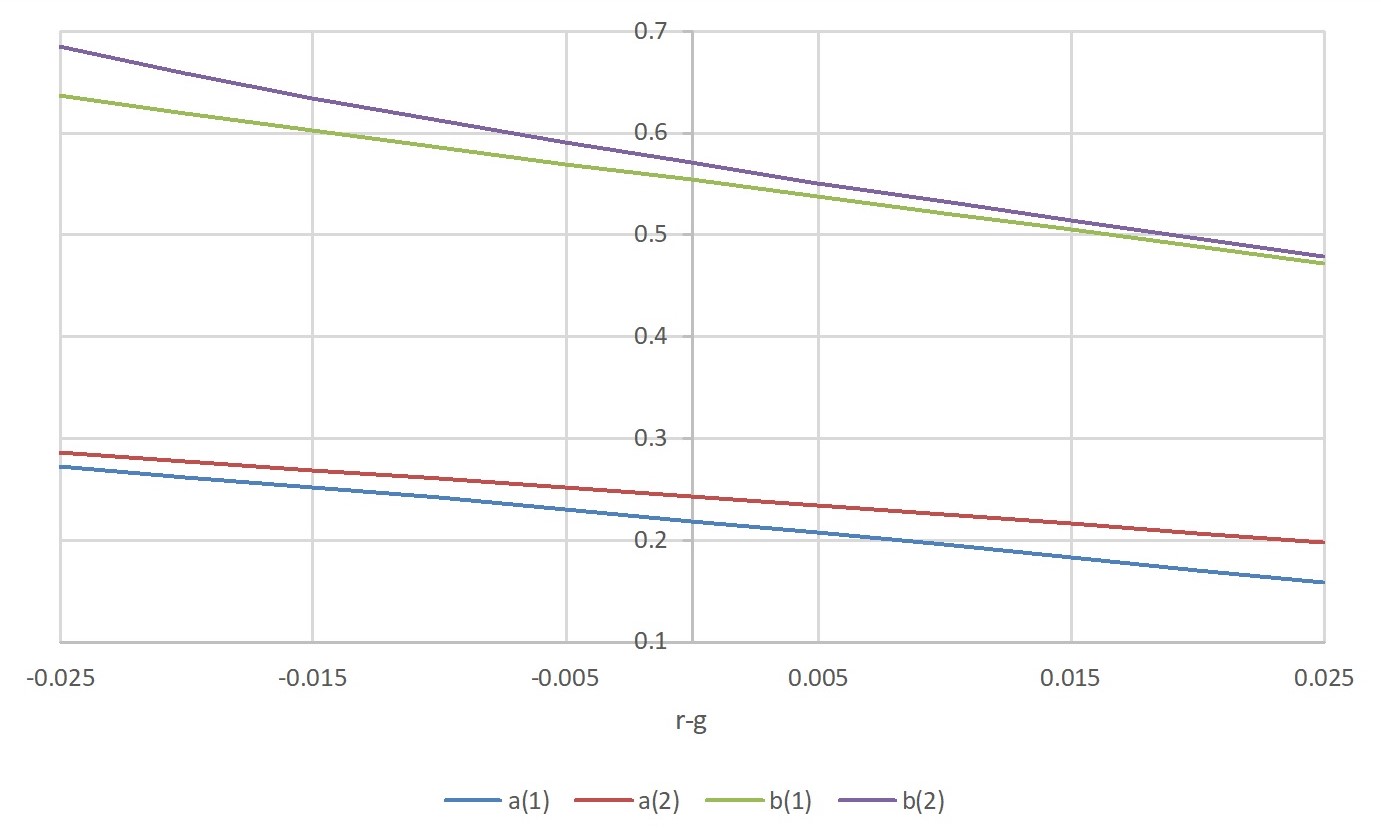}
\caption{Monotonicity of the control boundaries for $i=1,2$ with respect to $r-g$. For this plot we have used the following parameters' values: $q_1=0.02$, $q_2=0.02$, $\lambda_1=0.1$, $\lambda_2=0$, $\sigma=0.15$, $\rho=0.25$, $c_1=2$, $c_2=1.25$.}
\label{Figure-1}
\end{figure}
\vspace{0.25cm}

\textit{Comparative Statics with respect to $\sigma$.} We now move on to the study of the sensitivity of the control boundaries with respect to the debt ratio's volatility $\sigma$. We can observe from Figure \ref{Figure-2} that, in both regimes $i=1$ and $i=2$, the amplitude of continuation region $b(i)-a(i)$ increases with $\sigma$. This result is well known in the literature on real options (see \cite{DixitPind}, among others). In our setting of the debt ratio management, this means that the more volatile the debt ratio, the more cautious the government is, hence the longer it should wait before intervening on the debt ratio.

\begin{figure}
\includegraphics[scale=0.5]{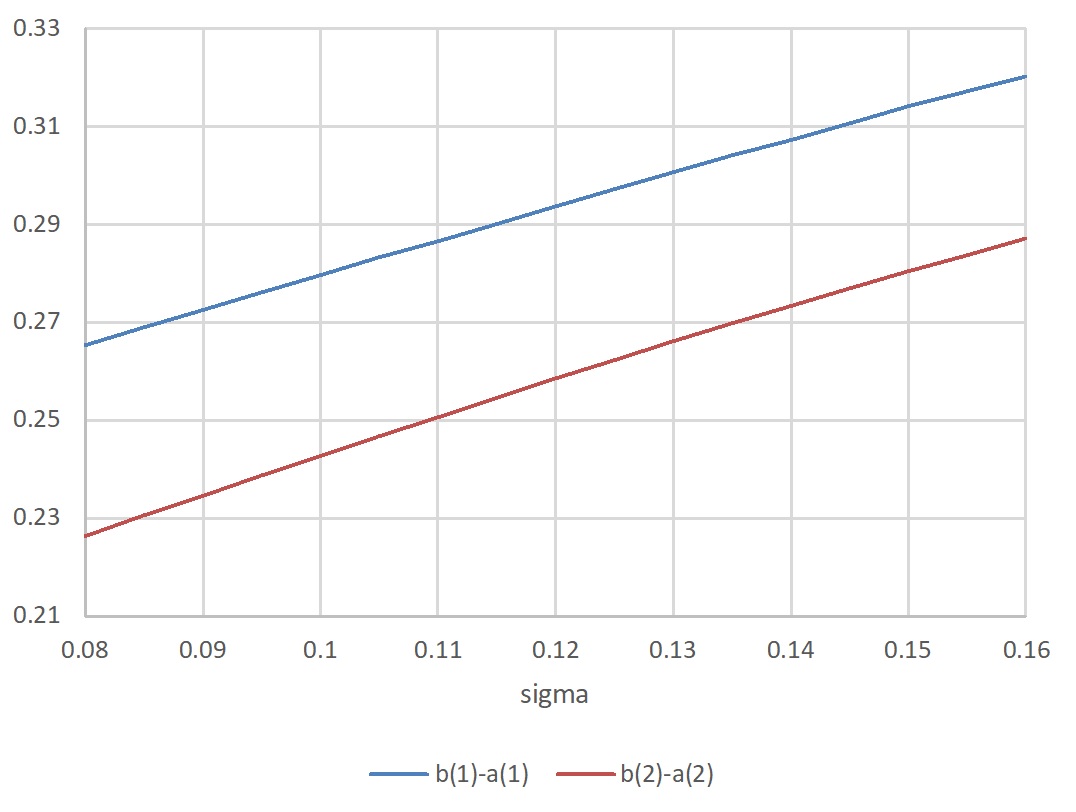}
\caption{Monotonicity of the continuation (no-action) region's size $b(i)-a(i)$, $i=1,2$, with respect to $\sigma$. For this plot we have used the following parameters' values: $q_1=0.02$, $q_2=0.02$, $r=0.04$, $g=0.015$, $\lambda_1=0.1$, $\lambda_2=0$, $\rho=0.25$, $c_1=2$, $c_2=1.25$.}
\label{Figure-2}
\end{figure}
\vspace{0.25cm}

\textit{Comparative Statics with respect to $q_2-q_1$.} It is seen in Figure \ref{Figure-3} that, in both regimes $i=1$ and $i=2$, the amplitude of the continuation region $b(i)-a(i)$ decreases when $q_2 - q_1$ increases. In particular, this can be viewed in two ways: On one hand, when the economy is in the ``bad'' state $i=1$, a decreasing rate $q_1$ of moving to the ``good'' regime $i=2$, suggests that the government should become more proactive, adopt a more restrictive policy and be willing to intervene more frequently on the debt ratio. This will counterbalance the fact that it is expected to remain under the ``bad'' regime for a longer time. On the other hand, when the economy is in the ``good'' state $i=2$, an increasing rate $q_2$ of moving to the ``bad'' regime $i=1$, suggests that the government should again become more proactive by adopting a more restrictive policy, so that it is more prepared to deal with the worse economic scenario.

\begin{figure}
\includegraphics[scale=0.7]{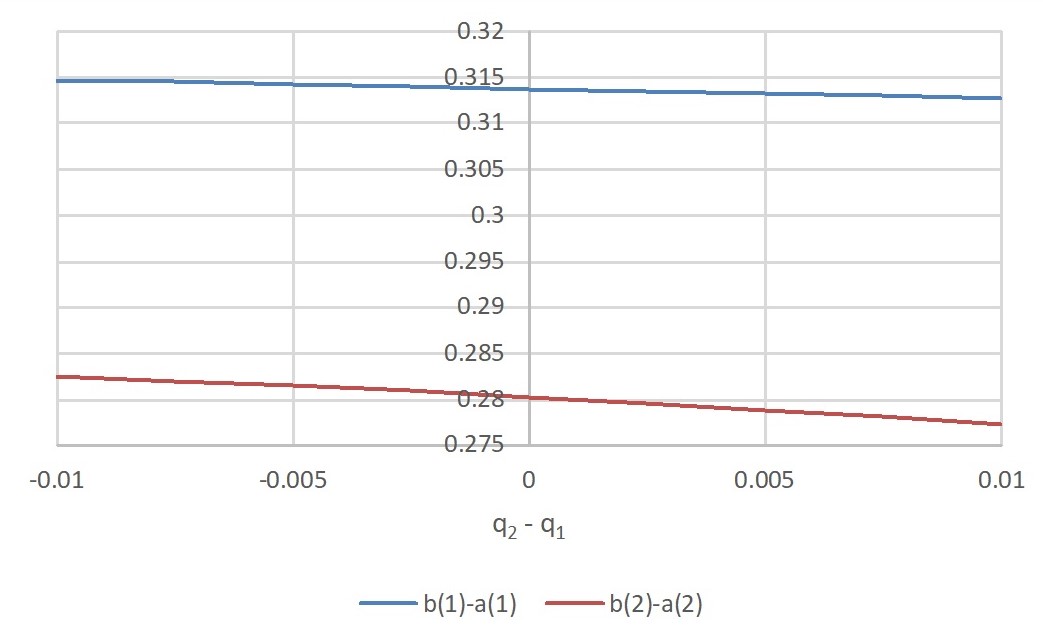}
\caption{Monotonicity of the continuation region's size, under both regimes, with respect to $q_2-q_1$. For this plot we have used the following parameters' values: $r=0.04$, $g=0.015$, $\lambda_1=0.1$, $\lambda_2=0$, $\sigma=0.15$, $\rho=0.25$, $c_1=2$, $c_2=1.25$.}
\label{Figure-3}
\end{figure}

\subsection{Comparison with the no-regime-switching case}
\label{sec:comparison}

In this section, we first present the solution to the no-regime-switching case, namely, the problem with only one regime $N=1$. Then, we compare the resulting optimal government policy with the regime switching optimal policy from Section \ref{sec:compstat} (where $N=2$) and we comment on the results. 

Observe that, under no-regime-switching, the dynamics of the governmentally managed debt-to-GDP ratio become one-dimensional and read as (compare with \eqref{DGDP})
\begin{equation*}
d\widetilde{X}_t=\big(r - g\big) \widetilde{X}_t dt + \sigma \widetilde{X}_t dW_t + d\xi_t - d\eta_t, \quad t>0, \qquad \widetilde{X}_0=x \in \mathbb{R_+} ,
\end{equation*} 
where we assume there is no additional macroeconomic risk process $Y$, in the form of a continuous-time Markov chain, and the (constant) interest rate on debt is simply given by the parameter $r$. 
In this case, the debt ratio management problem \eqref{eq:J}--\eqref{eq:valueOC} becomes one-dimensional as well, i.e.\ $V(x,i) \equiv V(x)$. 
Moreover, the boundaries involved in the two-sided Skorokhod reflection problem $\textbf{SP}(a,b;x,i) \equiv \textbf{SP}(a,b;x)$, defining the optimal controls in \eqref{eq:OC} and consequently the optimal policy of the government, are also constants denoted by $a$ and $b$. 

It follows from standard theory on singular stochastic control problems (see Chapter VIII in \cite{FlemingSoner}; compare also with the related problem in \cite{GuoPham}, among others) that the value function $V$ of \eqref{eq:valueOC} with $h(x) = x^2 / 2$ in \eqref{eq:J}, satisfies the following ordinary differential equation with boundary conditions:
\begin{align*}
&\frac{1}{2}\sigma^2 x^2 V_{xx}(x) + (r - g)x V_x(x) - \rho V(x) = - \frac{1}{2} x^2 
\quad \text{for } a < x < b , \\
&V_x(a+) = c_2 \quad \text{and} \quad V_x(b-) = c_1 , \\
&V_{xx}(a+) = 0 \quad \text{and} \quad V_{xx}(b-) = 0 . 
\end{align*}

Solving the above free-boundary problem, and imposing continuity of $V$ at $x=a$ and $x=b$, we get that 
\begin{align*} 
V(x) &= \begin{cases} V(a) - c_2 \, (a-x)    &, \text{ if } x \leq a ,\\ 
D_1 x^{\delta_1} + D_2 x^{\delta_2} + \frac{1}{2 (\rho - 2 (r - g) - \sigma^2)} x^2 &, \text{ if } a < x < b , \\ 
V(b) + c_1 \, (x-b) &, \text{ if } x \geq b ,
\end{cases}
\end{align*} 
with 
\begin{align*} 
D_i \equiv D_i(a,b) = \frac{\big(a - c_2 (\rho - 2 (r - g) - \sigma^2) \big) \big( \frac{b}{a}\big)^{\delta_{3-i}} - \big(b - c_1 (\rho - 2 (r - g) - \sigma^2)\big) \big( \frac{b}{a}\big)}
{(-1)^{i+1} \, \delta_i \, (\rho - 2 (r - g) - \sigma^2) \, a^{\delta_i - 1} \, \Big[ \big( \frac{b}{a}\big)^{\delta_1} - \big( \frac{b}{a}\big)^{\delta_2} \Big]} ,
\end{align*}
where the constants $\delta_2 < 0 < 1 < \delta_1$ are given by
\begin{equation*}
\delta_{1,2} = \frac{1}{2} - \frac{r - g}{\sigma^2} \pm 
\sqrt{ \left( \frac{1}{2} - \frac{r - g}{\sigma^2} \right)^2 + \frac{2 \rho}{\sigma^2} }  \,.
\end{equation*}
and the optimal boundaries $a \leq c_2 (\rho - r + g) < c_1 (\rho - r + g) \leq b$ are given by the unique solution to the system of arithmetic equations 
\begin{align*} 
&J_{1,2}(a) = J_{1,1}(b) \quad \text{and} \quad J_{2,2}(a) = J_{2,1}(b)
\end{align*}
where 
\begin{equation*}
J_{i,j}(x) = \frac{(\delta_i - 2) \, x - c_j (\delta_i -1) (\rho - 2 (r - g) - \sigma^2)}{x^{\delta_{3-i} - 1}} \,.
\end{equation*}
\vspace{0.15cm}

In order to compare the governmental optimal policy when there is no regime switching with the case study with $N=2$ regimes, we numerically calculate the values of the boundaries $a$ and $b$ and compare with the values of $a(1), a(2), b(1)$ and $b(2)$. 
Recall that, the no-regime-switching case assumes a constant interest rate $r$. Thus, in order to facilitate the comparison, we assume that under the ``good" economic regime $i=2$ in the two-regime case, we set $\lambda_2=0$, so that it also corresponds to an interest rate on debt equal to $r$. Then, under the ``bad" economic regime $i=1$, the interest rate on debt becomes $r+\lambda_1>r$; see Table \ref{Tab1}.

If there is a possibility for the government to experience different economic regimes, it is seen from Table \ref{Tab1} that the government should become more proactive, by adopting a more restrictive debt reduction policy. 
Even under the ``good" economic regime $i=2$, the government should (optimally) intervene sooner through austerity policies to reduce the debt ratio (at $58.23\%$), as opposed to the consistently ``good" economy under no regime switching, where the government is willing to intervene at a later stage (at $60.34\%$). This occurs irrespective of the fact that all parameters take exactly the same values. Clearly, the possibility of a future turn of events, leading to worse macroeconomic conditions, is what makes the government more cautious about the future and willing to intervene more frequently so that it is more prepared to deal with the worse economic scenario if and when it comes.
This also results in the slight postponing of public investments under the possibility of such change from $i=2$ to the worse economic regime $i=1$ (at a safer level $24.76\%$) compared to the slightly higher trigger level, when the  economy is consistently at a ``good" state (at $24.85\%$).   

\begin{table}[H] 
\begin{tabular}{ | c | c | c | c | } 
\hline
Number of &  \multirow{2}{1.3cm}{Regime} & \multicolumn{2}{c|}{Optimal boundaries (in $\%$)} \\
\cline{3-4}
Regimes & & $\quad \;\; \quad a \quad \quad \;\;$ & $\quad \quad b \quad \quad$ \\
\hline
\multirow{2}{1.1cm}{$N=2$} & $i=1$ & 22.5871 & 56.3248 \\
\cline{2-4}
& $i=2$ & 24.7630 & 58.2346 \\
\hline
$N=1$ & & 24.8539 & 60.3393 \\ 
\hline
\end{tabular} 

\bigskip
\caption{For this table we used the following parameter values: 
$r=0.012$, $g=0.015$, $\sigma=0.15$, $\rho=0.25$, $c_1=2$, $c_2=1.25$; 
and, for the $N=2$ case, the additional parameter's values: 
$\lambda_1=0.1$, $\lambda_2=0$, $q_1=0.02$, $q_2=0.02$.}

\label{Tab1}
\end{table}

\section*{Acknowledgments}

\noindent Financial support by the German Research Foundation (DFG) through the Collaborative Research Centre 1283 ``Taming uncertainty and profiting from randomness and low regularity in analysis, stochastics and their applications'' is gratefully acknowledged by Giorgio Ferrari. 

Financial support by the EPSRC via the grant EP/P017193/1 ``Optimal timing for financial and economic decisions under adverse and stressful conditions'' is gratefully acknowledged by Neofytos Rodosthenous.

We thank anonymous referees and associate editor for valuable comments and suggestions. Moreover, we are grateful to Dr.\ Gerardo Ferrara from Bank of England for fruitful discussions.

\end{document}